\newcommand\shorttitle{On the joint moments of the characteristic polynomials of random unitary matrices}
\newcommand\authors{Theodoros Assiotis, Jonathan P. Keating and Jon Warren}
\ifodd\value{page}
\authors
\shorttitle
\newtheorem{thm}{Theorem}[section]
\newtheorem{cor}[thm]{Corollary}
\newtheorem{lem}[thm]{Lemma}
\newtheorem{defn}[thm]{Definition}
\newtheorem{rmk}[thm]{Remark}
\newtheorem{prop}[thm]{Proposition}
\newtheorem{conj}[thm]{Conjecture}
\title{\large \bf ON THE JOINT MOMENTS OF THE CHARACTERISTIC POLYNOMIALS OF RANDOM UNITARY MATRICES}
\author{\small THEODOROS ASSIOTIS, JONATHAN P. KEATING AND JON WARREN}
\date{}
\begin{document}

\maketitle

\begin{abstract}
We establish the asymptotics of the joint moments of the characteristic polynomial of a random unitary matrix and its derivative for general real values of the exponents, proving a conjecture made by Hughes in 2001 \cite{Hughes}. Moreover, we give a probabilistic representation for the leading order coefficient in the asymptotic in terms of a real-valued random variable that plays an important role in the ergodic decomposition of the Hua-Pickrell measures. This enables us to establish connections between the characteristic function of this random variable and the $\sigma$-Painlev\'{e} III' equation.
\end{abstract}

\tableofcontents

\section{Introduction}

\subsection{History of the problem}

Let $\mathbf{U} \in \mathbb{U}(N)$ where $\mathbb{U}(N)$ is the group of $N\times N$ unitary matrices. We consider the characteristic polynomial of $\mathbf{U}$ given by:
\begin{align*}
Z_\mathbf{U}(\theta)=\det \left(\mathbf{I}-e^{-\i \theta}\mathbf{U}\right)=\prod_{j=1}^{N}\left(1-e^{\i(\theta_j-\theta)}\right)
\end{align*}
where $e^{\i \theta_1},\dots,e^{\i \theta_N}$ are the eigenvalues of $\mathbf{U}$ with $\theta_i \in [0,2\pi)$.
We define the closely related quantity:
\begin{align*}
V_{\mathbf{U}}(\theta)=e^{\i N\frac{\theta+\pi}{2}-\i \sum_{j=1}^{N}\frac{\theta_j}{2}}Z_\mathbf{U}(\theta).
\end{align*}
Note that, $V_\mathbf{U}(\theta)$ is real valued for $\theta \in [0,2\pi)$ and that $|V_\mathbf{U}(\theta)|=|Z_\mathbf{U}(\theta)|$. We are interested in the following quantities, which we call the joint moments and denote by $\mathfrak{F}_{N}(s,h)$, see \cite{JointMomentsPainleve},\cite{Hughes}, \cite{MixedMoments}, given by:
\begin{align}
\mathfrak{F}_N(s,h)=\int_{\mathbb{U}(N)}^{}|V_\mathbf{U}(0)|^{2s-2h}|V'_\mathbf{U}(0)|^{2h}d\mu_N(\mathbf{U})
\end{align}
where $d\mu_N$ denotes the normalized Haar probability measure on $\mathbb{U}(N)$, with $-\frac{1}{2}<h<s+\frac{1}{2}$. In addition to their intrinsic probabilistic interest, our motivation behind studying these quantities comes from connections between the characteristic polynomials of random unitary matrices and number theory, in particular the Riemann zeta function.  We will expand on this in Section \ref{NumberTheorySection} below.

The goal of this paper is to investigate the $N \to \infty$ behaviour of $\mathfrak{F}_{N}(s,h)$ and, in particular, to prove the following conjecture for its asymptotics made in 2001 by Chris Hughes \cite{Hughes}:
\begin{align}
\frac{1}{N^{s^2+2h}}\mathfrak{F}_N(s,h)\overset{?}{\longrightarrow}\mathfrak{F}(s,h), \ \textnormal{ as } N \to \infty,
\end{align}
for a certain function $\mathfrak{F}(s,h)$, unknown for general real values of the parameters $s$ and $h$.

We start by giving a brief overview of what was known before our work. In his thesis \cite{Hughes} Hughes proved the conjecture for $s,h \in \mathbb{N}$ and gave an expression for $\mathfrak{F}(s,h)$ in terms of a combinatorial sum. This expression for $\mathfrak{F}(s,h)$ made sense for real $s$ as well, but not general real $h$. Subsequently Conrey, Rubinstein and Snaith \cite{ConreyRubinsteinSnaith}, by making use of certain multiple contour integral formulae for these quantities from \cite{Autocorrelation} and \cite{Integralmoments}, gave an alternative proof in the special case $s=h\in \mathbb{N}$ and obtained a different expression for the leading order coefficient $\mathfrak{F}(s,s)$ in the form of a determinant of Bessel functions.  This determinant was later connected to Painlev\'{e} equations by Forrester and Witte \cite{ForresterWitte2}. Dehaye \cite{Dehaye1}, \cite{Dehaye2} gave yet another proof of the conjecture for $s,h\in \mathbb{N}$ using representation theory and symmetric functions and obtained an expression for $\mathfrak{F}(s,h)$ in terms of a different combinatorial sum. Again, it was observed that the expression for $\mathfrak{F}(s,h)$ made sense for real values of $s$ as well, but $h$ had to be an integer. Winn in \cite{Winn}, by finding connections with hypergeometric functions, was the first to prove the conjecture for $s\in \mathbb{N}$ and half-integer $h \in \frac{1}{2}\mathbb{N}$ and gave an expression for $\mathfrak{F}(s,h)$ in terms of a combinatorial sum.  Recently, Basor {\it et al.} \cite{JointMomentsPainleve}  obtained an alternative proof of the conjecture for $s,h \in \mathbb{N}$, by connecting the pre-limit quantity $\mathfrak{F}_N(s,h)$ to Painlev\'{e} equations using Riemann Hilbert problem techniques, and gave an expression for $\mathfrak{F}(s,h)$ in terms of Painlev\'{e} transcendents. They also conjectured conformal block expansions for the joint moments.  Finally, Bailey {\it et al.} \cite{MixedMoments} extended the approach of \cite{ConreyRubinsteinSnaith} to $s,h \in \mathbb{N}$ and obtained analogous results for the $N\to\infty$ limit. 

It is worth emphasizing that even though there was some indication in several of the works just listed of what $\mathfrak{F}(s,h)$ might be for real values of $s$, there was no proof of the asymptotic formula for non-integer values of this variable.  Moreover, nothing was known or conjectured, prior to the present work, for general real values of $h$. In this paper, by developing a novel approach, we prove the conjecture for arbitrary real values of $s>-\frac{1}{2}$ and positive real values of $h$ in the full range $h<s+\frac{1}{2}$ and moreover identify the function $\mathfrak{F}(s,h)$, which turns out to have an interesting probabilistic interpretation.

\subsection{The main result}

In order to state our main result we first require the definition of a certain remarkable real-valued random variable. This random variable might seem like it comes out of thin air, but we shall explain its origins and significance, and also how it arises in the argument, in Sections \ref{SubsectionOrigins} and \ref{SubsectionStrategyProof}.

\begin{defn}\label{DefinitionGamma1}
Let $s>-\frac{1}{2}$. We consider the determinantal point process $\mathsf{C}^{(s)}$ on $\mathbb{R}^*=(-\infty,0)\cup (0,\infty)$ with correlation kernel $\mathsf{K}^{(s)}(x,y)$\footnote{This means that $\mathsf{C}^{(s)}$ is a random point process on $\mathbb{R}^*$ so that for any $l\ge 1$ and $F$ a bounded Borel function of compact support on $\left(\mathbb{R}^*\right)^l$:\begin{align*}
\mathbb{E}\left[\sum_{x_{i_1},\dots,x_{i_l}\in \mathsf{C}^{(s)}}^{}F(x_{i_1},\dots,x_{i_l})\right]=\int_{\left(\mathbb{R}^*\right)^l}^{}F(z_1,\dots,z_l)\det\left[\mathsf{K}^{(s)}\left(z_i,z_j\right)\right]_{i,j=1}^ldz_1\dots dz_l,
\end{align*}
where the sum is over all $l$-tuples of pairwise distinct points of the random point configuration $\mathsf{C}^{(s)}$. Or equivalently for any $l\ge 1$ and distinct points $z_1,\dots,z_l \in \mathbb{R}^*$:
\begin{align*}
\mathbb{P}\left(\textnormal{there is a particle of } \mathsf{C}^{(s)}\textnormal{ in each interval } (z_i,z_i+dz_i)\right)=\det\left[\mathsf{K}^{(s)}\left(z_i,z_j\right)\right]_{i,j=1}^ldz_1\dots dz_l.
\end{align*}} given in integrable form:
\begin{align*}
\mathsf{K}^{(s)}(x,y)=\frac{1}{2\pi}\frac{\Gamma(s+1)^2}{\Gamma(2s+1)\Gamma(2s+2)}\frac{P^{(s)}(x)Q^{(s)}(y)-P^{(s)}(y)Q^{(s)}(x)}{x-y},
\end{align*}
with
\begin{align*}
P^{(s)}(x)&=2^{2s-\frac{1}{2}}\Gamma\left(s+\frac{1}{2}\right)|x|^{-\frac{1}{2}}J_{s-\frac{1}{2}}\left(\frac{1}{|x|}\right),\\
Q^{(s)}(x)&=\textnormal{sgn}(x)2^{2s+\frac{1}{2}}\Gamma\left(s+\frac{3}{2}\right)|x|^{-\frac{1}{2}}J_{s+\frac{1}{2}}\left(\frac{1}{|x|}\right),
\end{align*}
where $J_s(\cdot)$ is the Bessel function. Let $-\alpha_1^-< -\alpha_2^-< -\alpha_3^-< \cdots < 0$ and $\alpha_1^+> \alpha_2^+> \alpha_3^+> \cdots > 0$ be the corresponding random points of $\mathsf{C}^{(s)}$. We then consider the random variable (this is well defined by the results of Qiu in \cite{Qiu}, see also Theorem \ref{BorodinOlshanskiQiu} below):
\begin{align}\label{DefinitionDisplayRV}
\mathsf{X}(s)=\lim_{m \to \infty} \left[\sum_{i= 1}^{\infty}\alpha_i^+\mathbf{1}\left(\alpha_i^+ >\frac{1}{m^2}\right)-\sum_{i= 1}^{\infty}\alpha_i^-\mathbf{1}\left(\alpha_i^- >\frac{1}{m^2}\right)\right].
\end{align}
We note that the right hand side of (\ref{DefinitionDisplayRV}) depends on the parameter $s$ because the $\alpha_i^{+}$'s and $-\alpha_i^{-}$'s are the points of the determinantal point process $\mathsf{C}^{(s)}$, which is itself depended on $s$ through its explicit correlation kernel $\mathsf{K}^{(s)}$.
\end{defn}
\noindent$\mathsf{X}(s)$ can be thought of as a kind of principal value sum of the points in $\mathsf{C}^{(s)}$. 

We are now in a position to state our main result precisely.

\begin{thm}\label{MainResult}
Let $s>-\frac{1}{2}$ and $0\le h<s+\frac{1}{2}$. Then,
\begin{align}
\lim_{N\to \infty}\frac{1}{N^{s^2+2h}}\mathfrak{F}_N(s,h)\overset{\textnormal{def}}{=}\mathfrak{F}(s,h)=\mathfrak{F}(s,0)2^{-2h}\mathbb{E}\left[|\mathsf{X}(s)|^{2h}\right]
\end{align}
with the limit $\mathfrak{F}(s,h)$ satisfying $0<\mathfrak{F}(s,h)<\infty$. The function $\mathfrak{F}(s,0)$ is given by 
\begin{align*}
\mathfrak{F}(s,0)=\frac{G(s+1)^2}{G(2s+1)},
\end{align*}
where $G$ is the Barnes G-function, given by the infinite product
\begin{align*}
G(1+z)=(2\pi)^{\frac{z}{2}}\exp\left(-\frac{z+z^2(1+\gamma)}{2}\right)\prod_{k=1}^{\infty}\left(1+\frac{z}{k}\right)^k\exp\left(\frac{z^2}{2k}-z\right)
\end{align*}
with $\gamma$ denoting Euler's constant.
\end{thm}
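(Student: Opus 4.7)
My proof starts from a log-derivative identity that expresses $V_\mathbf{U}'(0)/V_\mathbf{U}(0)$ as a cotangent sum. Using the product representation $V_\mathbf{U}(\theta)=\prod_{j=1}^N 2\sin\bigl((\theta_j-\theta)/2\bigr)$ (which follows from the definition and explains why $V_\mathbf{U}$ is real on the circle), one obtains
\begin{align*}
\frac{V_\mathbf{U}'(0)}{V_\mathbf{U}(0)}=-\frac{1}{2}\sum_{j=1}^N\cot(\theta_j/2),
\end{align*}
so that $|V_\mathbf{U}'(0)|^{2h}=2^{-2h}|V_\mathbf{U}(0)|^{2h}\bigl|\sum_j\cot(\theta_j/2)\bigr|^{2h}$ and hence
\begin{align*}
\mathfrak{F}_N(s,h)=2^{-2h}\,Z_N(s)\,\mathbb{E}^{(s)}_N\!\left[\,\Bigl|\sum_{j=1}^N\cot(\theta_j/2)\Bigr|^{2h}\right],
\end{align*}
where $Z_N(s)=\int_{\mathbb{U}(N)}|V_\mathbf{U}(0)|^{2s}\,d\mu_N$ and $\mathbb{E}^{(s)}_N$ denotes expectation under the Fisher--Hartwig probability measure $Z_N(s)^{-1}|V_\mathbf{U}(0)|^{2s}d\mu_N$. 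The Keating--Snaith formula supplies $N^{-s^2}Z_N(s)\to G(s+1)^2/G(2s+1)=\mathfrak{F}(s,0)$, which both establishes the $h=0$ case and reduces the theorem to the assertion $N^{-2h}\mathbb{E}^{(s)}_N\bigl[|\sum_j\cot(\theta_j/2)|^{2h}\bigr]\to\mathbb{E}[|\mathsf{X}(s)|^{2h}]$.

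The next step is to analyse the eigenvalue process under $\mathbb{P}^{(s)}_N$, which is the standard Circular Jacobi / CUE-with-one-Fisher--Hartwig-singularity ensemble whose correlation kernel is explicit in Jacobi polynomials. The classical local scaling near $\theta=0$, rescaling the angles by $N$, converges to the Bessel point process with parameter $s$ on $\mathbb{R}$. Under the inversion $y\mapsto 1/y$ the Bessel process becomes a determinantal process on $\mathbb{R}^*$ whose kernel is exactly the $\mathsf{K}^{(s)}$ of Definition \ref{DefinitionGamma1}: this is the same inversion-of-argument that replaces the usual Bessel factor $J_\nu(\sqrt{xy})$ by $J_\nu(1/|x|)$ in $P^{(s)}$ and $Q^{(s)}$. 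Since $\cot(\theta_j/2)=2/\theta_j+O(\theta_j)$ for small $\theta_j$, writing $y_j=N\theta_j/2$ gives $N^{-1}\cot(\theta_j/2)\approx 1/y_j$; if the $y_j$'s are the Bessel points then the $\alpha_j:=1/y_j$ are the points of the process $\mathsf{C}^{(s)}$, and $N^{-1}\sum_j\cot(\theta_j/2)$ formally equals $\sum_j\alpha_j$. This formal sum diverges, but the symmetry $\theta_j\mapsto -\theta_j$ of the ensemble enforces a $\pm$ cancellation that matches precisely the symmetric truncation in \eqref{DefinitionDisplayRV}. The in-distribution convergence of the rescaled sum to $\mathsf{X}(s)$ then follows by combining local convergence of correlation kernels, control of the (bounded) bulk contribution, and Qiu's regularisation result (Theorem \ref{BorodinOlshanskiQiu}) guaranteeing that the principal-value sum exists.

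The main obstacle is promoting this weak convergence to convergence of the $2h$-th absolute moments, which requires uniform integrability of $|N^{-1}\sum_j\cot(\theta_j/2)|^{2h}$ under $\mathbb{P}^{(s)}_N$. Here the hypothesis $h<s+\tfrac12$ appears naturally: the Fisher--Hartwig weight $|1-e^{\i\theta_j}|^{2s}\sim|\theta_j|^{2s}$ suppresses eigenvalues near $0$, and the single-particle contribution $|\cot(\theta_j/2)|^{2h}\sim|\theta_j|^{-2h}$ is integrable against the effective density $|\theta|^{2s}d\theta$ precisely when $2s-2h>-1$. To turn this heuristic into a uniform estimate I expect to need either sharp Selberg-/Jacobi-type integral identities for integer exponents together with a Hölder interpolation in $h$, or direct $L^p$ bounds on the relevant correlation functions using Christoffel--Darboux and Bessel asymptotics for the Jacobi kernel near the hard edge. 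Once uniform integrability is established, convergence of moments gives the identification $\mathfrak{F}(s,h)=2^{-2h}\mathfrak{F}(s,0)\mathbb{E}[|\mathsf{X}(s)|^{2h}]$; positivity and finiteness of $\mathbb{E}[|\mathsf{X}(s)|^{2h}]$ in the stated range follow from the same bound and from the fact that $\mathsf{X}(s)$ is non-degenerate.
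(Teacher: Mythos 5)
Your reduction of $\mathfrak{F}_N(s,h)$ to $2^{-2h}Z_N(s)\,\mathbb{E}^{(s)}_N\bigl[|\sum_j\cot(\theta_j/2)|^{2h}\bigr]$ is correct and is the unit-circle incarnation of the paper's Propositions \ref{Winn}--\ref{HuaPickrellRepresentation}: under the Cayley transform $x_j=\cot(\theta_j/2)$ the Fisher--Hartwig weight becomes the Hua--Pickrell measure $\mathsf{M}_N^{(s)}$ and $\sum_j\cot(\theta_j/2)$ becomes $\sum_j\mathsf{x}_j^{(N)}$, so these two setups are interchangeable. Your identification of the in-distribution limit with $\mathsf{X}(s)$ is also right and is exactly what the paper imports from Borodin--Olshanski and Qiu (Theorem \ref{BorodinOlshanskiQiu}).

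The genuine gap is in your treatment of the uniform integrability step, which is the entire technical content of the theorem. You propose to prove it either by interpolating from Selberg/Jacobi integral identities in $h$, or by direct $L^p$ bounds on the correlation kernel near the hard edge. The second of these is precisely the route the authors describe having tried first and then abandoned: controlling moments of $\sum_j\cot(\theta_j/2)$ through correlation functions forces you to exploit the near-origin sign cancellation (the one-point function alone gives $\mathbb{E}\bigl[\sum_j|\cot(\theta_j/2)|/N\bigr]\sim c\log N$, so there is no absolute-value bound), and after that delicate work the method still does not reach $h\in[s,s+\tfrac12)$ or $s\le\tfrac12$. Your single-particle heuristic that $|\theta|^{-2h}$ is integrable against $|\theta|^{2s}\,d\theta$ when $2s-2h>-1$ identifies the correct threshold but does not control the $N$-particle sum, exactly because the dangerous contribution is a cancellation between positive and negative terms, not a single-particle tail. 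The Hölder-interpolation alternative is also not developed: you would need a family of exactly computable moments dense enough in $h$ together with some log-convexity input, and the paper gives no indication that this is available for non-integer $s$.

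What the paper actually does at this step is a different idea entirely, and it is the one you are missing: since $\sum_j\mathsf{x}_j^{(N)}$ is the trace of a Hua--Pickrell random matrix, it equals the sum of the diagonal entries, and by the consistency of $\{\mathfrak{M}_N^{(s)}\}_N$ under corners together with conjugation-invariance these diagonal entries form an infinite \emph{exchangeable} sequence $\{\mathsf{d}_i\}$ with explicit Pearson IV marginals that do not grow with $N$ (Proposition \ref{DiagonalConsistency}). For $s>0$ Jensen's inequality applied to $|\cdot|^{2r}$ with $r\ge\tfrac12$ then gives $\sup_N\mathbb{E}\bigl[|N^{-1}\sum_i\mathsf{d}_i|^{2r}\bigr]\le\mathbb{E}[|\mathsf{d}_1|^{2r}]<\infty$ immediately, with no correlation-kernel estimates at all; for $-\tfrac12<s\le 0$ one instead conditions on the ergodic component, uses that the $\mathsf{d}_i$ are then i.i.d.\ with mean $\gamma_1\overset{\textnormal{d}}{=}\mathsf{X}(s)$ and variance $\sum(\alpha_i^\pm)^2$, and reduces to the finiteness of $\mathbb{E}\bigl[(\sum_{x\in\mathsf{C}^{(s)}}x^2)^h\bigr]$, which is a genuinely one-particle kernel computation (Lemmas \ref{LowMomentsConvLem} and \ref{KernelEstimate}). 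Your proposal never uses the trace/diagonal observation, so in its current form it would at best reproduce the authors' abandoned first attempt and would not cover the stated range.

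Finally, you assert that positivity of $\mathfrak{F}(s,h)$ follows from the non-degeneracy of $\mathsf{X}(s)$, but that non-degeneracy is not obvious from the definition as a principal-value sum; the paper devotes a separate and nontrivial argument (Lemma \ref{NotZeroLem}, using restriction of determinantal measures and a strict Cauchy--Schwarz inequality) to rule out $\mathsf{X}(s)\equiv 0$. That point needs a proof, not an appeal to intuition.
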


\begin{rmk}
We expect the statement of Theorem \ref{MainResult} still to hold for $-\frac{1}{2}<h<0$. We also discuss the range $-1<s\le -\frac{1}{2}$ in Remark \ref{RemarkInfiniteMeasures}.
\end{rmk}

\subsection{On the leading order coefficient $\mathfrak{F}(s,h)$}\label{SectionLeadingCoefficient}

We now turn our attention to the leading order coefficient $\mathfrak{F}(s,h)$ in the asymptotic. The study of this essentially boils down to analysing the moments $\mathbb{E}\left[|\mathsf{X}(s)|^{2h}\right]$ of the absolute value of the random variable $\mathsf{X}(s)$ since $\mathfrak{F}(s,0)$ is completely explicit. Combining the methods developed in this paper with previous works on this problem \cite{JointMomentsPainleve}, \cite{MixedMoments}, \cite{Winn}, \cite{Dehaye1}, \cite{Hughes} we prove a number of results which shed some light on the distribution of the random variable $\mathsf{X}(s)$ which are of intrinsic interest (see for example Section \ref{SubsectionOrigins} for motivation). We begin with the following formula for the even moments of $\mathsf{X}(s)$ which is proven using probabilistic arguments in Section \ref{SectionPainleveProof}.

\begin{prop}\label{EvenMomentsFormulaProposition}Let $h\in \mathbb{N}$. Denote by $\mathbb{E}_N^{(s)}$ the expectation with respect to the probability measure $\mathsf{M}_N^{(s)}$ on ordered configurations in $\mathbb{R}^N$ given in Definition \ref{DefinitionHuaPickrell}. Then, for $s>h-\frac{1}{2}$ we have:
\begin{align}\label{EvenMomentsFormula}
\mathbb{E}\left[\mathsf{X}(s)^{2h}\right]=\frac{1}{(2h)!}\sum_{k=1}^{2h}(-1)^{2h-k}\binom{2h}{k}\mathbb{E}_k^{(s)}\left[\left(\mathsf{x}_1^{(k)}+\dots+\mathsf{x}_k^{(k)}\right)^{2h}\right].
\end{align}
Moreover, for any $N\ge 1$ the following function, defined for $s>h-\frac{1}{2}$:
\begin{align*}
s\mapsto \mathbb{E}_N^{(s)}\left[\left(\mathsf{x}_1^{(N)}+\dots+\mathsf{x}_N^{(N)}\right)^{2h}\right]
\end{align*}
is a rational function.
\end{prop}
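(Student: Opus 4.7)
The right-hand side of \eqref{EvenMomentsFormula} has the structure of Newton's forward-difference formula. Writing $g_s(N):=\mathbb{E}_N^{(s)}[(\mathsf{x}_1^{(N)}+\cdots+\mathsf{x}_N^{(N)})^{2h}]$ and noting that the omitted $k=0$ term $g_s(0)=0^{2h}$ vanishes for $h\ge 1$, the right-hand side equals $\frac{1}{(2h)!}\sum_{k=0}^{2h}(-1)^{2h-k}\binom{2h}{k}g_s(k)$, which coincides with the coefficient of $N^{2h}$ in $g_s(N)$ provided $g_s$ is a polynomial in $N$ of degree at most $2h$. My plan is therefore to establish (i) that $g_s$ is polynomial in $N$ of degree at most $2h$ with coefficients rational in $s$, which already yields the rationality assertion of the proposition, and (ii) that its leading coefficient in $N$ equals $\mathbb{E}[\mathsf{X}(s)^{2h}]$.

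For (i), the exchangeability of $\mathsf{M}_N^{(s)}$ combined with the multinomial expansion gives
\begin{align*}
g_s(N)=\sum_{k=1}^{2h}\binom{N}{k}\sum_{\substack{j_1,\dots,j_k\ge 1\\ j_1+\cdots+j_k=2h}}\frac{(2h)!}{j_1!\cdots j_k!}\,\mathbb{E}_N^{(s)}\!\left[\mathsf{x}_1^{j_1}\cdots\mathsf{x}_k^{j_k}\right].
\end{align*}
Using the explicit form of the Hua-Pickrell density and the associated determinantal/orthogonal polynomial structure, each joint moment reduces (via Selberg-Morris-Mehta type integral computations) to an explicit ratio of gamma functions, transparently rational in $s$. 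Polynomial $N$-dependence of degree at most $2h$ then follows from the $\binom{N}{k}$ prefactors combined with a standard degree count for the $N$-dependence of the joint moments themselves.

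For (ii), I would identify the leading coefficient in $N$ probabilistically using the connection between $\mathsf{M}_N^{(s)}$ and the DPP $\mathsf{C}^{(s)}$ that underlies Definition \ref{DefinitionGamma1}: $\mathsf{C}^{(s)}$ arises as an inversion/scaling limit of the Hua-Pickrell particles, so that the rescaled sum $\frac{1}{N}\sum_i\mathsf{x}_i^{(N)}$ converges to $\mathsf{X}(s)$ in the sense appropriate to the principal-value regularization of \eqref{DefinitionDisplayRV}. Combined with uniform-in-$N$ moment bounds made possible by the hypothesis $s>h-1/2$, this convergence passes to the $2h$-th moment, identifying the leading coefficient of $g_s(N)$ as $\mathbb{E}[\mathsf{X}(s)^{2h}]$; combining with (i) and invoking Newton's formula then yields \eqref{EvenMomentsFormula}. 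The hardest step will be (ii): because individual particles of $\mathsf{C}^{(s)}$ are not integrable, moment convergence cannot be established by soft uniform-integrability arguments alone, and will require careful use of the Borodin-Olshanski-Qiu machinery together with the explicit Bessel-function form of $\mathsf{K}^{(s)}$.
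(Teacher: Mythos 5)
Your framing via Newton's forward differences is equivalent to the paper's combinatorial identity (the identity $z_1\cdots z_{2h}=\frac{1}{(2h)!}\sum_k(-1)^{2h-k}\mathcal{S}_k$ the paper uses is precisely the statement that the $(2h)$-th finite difference of a degree-$2h$ polynomial at $0$ equals $(2h)!$ times its leading coefficient). However, there is a genuine gap in step (i), and step (ii) is attacked by a route the authors explicitly abandoned.

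The gap in (i): you apply the multinomial expansion plus exchangeability to $\mathsf{x}_1^{(N)},\dots,\mathsf{x}_N^{(N)}$, but the resulting joint moments $\mathbb{E}_N^{(s)}\bigl[\mathsf{x}_1^{j_1}\cdots\mathsf{x}_k^{j_k}\bigr]$ depend on $N$ --- the eigenvalues grow like $N$, so these moments grow with $N$ rather than being constants. There is no ``standard degree count'' that caps $g_s(N)$ at degree $2h$ from this expansion; in fact the naive count gives the wrong answer. The missing idea is the paper's Proposition \ref{DiagonalConsistency}: since the sum of eigenvalues equals the trace, one has $\sum_{i=1}^{N}\mathsf{x}_i^{(N)}\overset{\textnormal{d}}{=}\sum_{i=1}^{N}\mathsf{d}_i$ where $\{\mathsf{d}_i\}_{i\ge 1}$ is a \emph{single} exchangeable sequence, independent of $N$, built from the consistent family $\{\mathfrak{M}_N^{(s)}\}$. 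Plugging the $\mathsf{d}_i$ into your multinomial expansion, the mixed moments $\mathbb{E}^{(s)}[\mathsf{d}_1^{j_1}\cdots\mathsf{d}_k^{j_k}]$ are $N$-independent constants, so $g_s(N)$ \emph{is} a polynomial in $N$ of degree $\le 2h$, with leading coefficient $\mathbb{E}^{(s)}[\mathsf{d}_1\cdots\mathsf{d}_{2h}]$. Without this reduction your polynomiality claim is unsupported.

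Once you have the $\mathsf{d}_i$ in hand, step (ii) is also much softer than you expect. The leading coefficient $\mathbb{E}^{(s)}[\mathsf{d}_1\cdots\mathsf{d}_{2h}]$ equals $\mathbb{E}[\mathsf{X}(s)^{2h}]$ directly by the ergodic decomposition: conditionally on $\omega$ the $\mathsf{d}_i$ are i.i.d.\ with mean $\gamma_1$, and $\gamma_1\overset{\textnormal{d}}{=}\mathsf{X}(s)$ under $\nu^{(s)}$. No scaling limit of the Hua--Pickrell particle system, no Bessel-kernel estimates, and no uniform-integrability argument are needed for \emph{this} proposition. The direct DPP attack you sketch is essentially the approach the authors describe in Section \ref{SubsectionStrategyProof} as their initial, substantially harder route, which fails to reach the full parameter range. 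You should also note that the paper's rationality argument is a separate, explicit gamma-function computation on the eigenvalue density requiring the Legendre duplication formula; it is not quite ``transparently rational'' from Selberg-type formulas alone.
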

The formula (\ref{EvenMomentsFormula}) above is useful for explicit computations. For example, an easy calculation gives the following expression for the second moment of $\mathsf{X}(s)$:
\begin{align}\label{SecondMoment}
\mathbb{E}\left[\mathsf{X}(s)^{2}\right]=\frac{1}{4s^2-1}, \ \ s>\frac{1}{2}.
\end{align}
Moreover, an immediate consequence of Proposition \ref{EvenMomentsFormulaProposition} is that the explicit rational function expressions, in the variable $s$, for $\frac{\mathfrak{F}(s,h)}{\mathfrak{F}(s,0)}$ computed for $s,h\in \mathbb{N}$ in \cite{Hughes}, \cite{Dehaye1}, \cite{JointMomentsPainleve} extend to non-integer values of $s$, in the range $s>h-\frac{1}{2}$, for fixed $h\in \mathbb{N}$ \footnote{It is claimed in \cite{Dehaye1} that the rational function expressions for $\frac{\mathfrak{F}(s,h)}{\mathfrak{F}(s,0)}$ obtained there for $s,h\in \mathbb{N}$ also hold for non-integer $s$, but the proof of this claim, cf.~Theorem 5.11 therein, does not seem complete to us.  Moreover, it is not clear that the ideas sketched there can be developed to construct a proof.  Simpler formulae for $\frac{\mathfrak{F}_N(s,h)}{\mathfrak{F}_N(s,0)}$ for $s,h\in \mathbb{N}$ and $N\ge 1$ were later obtained in \cite{Dehaye2} and  \cite{JointMomentsPainleve} which do continue to non-integer $s$ by the elementary argument we give later in this paper. Then, one may pass to the $N\to\infty$ limit to obtain a result for the even moments of $\mathsf{X}(s)$ for non-integer $s$.}.

 We then move on to the following corollary of Theorem \ref{MainResult}. We do not see how to verify this result directly from the definition of $\mathsf{X}(s)$ in (\ref{DefinitionDisplayRV}).

\begin{cor}\label{CorollaryPainleve}
Let $s,h\in \mathbb{N}$ with $h\le s$. Then,
\begin{align*}
\mathbb{E}\left[\mathsf{X}(s)^{2h}\right]=2^{2h}(-1)^h\frac{d^{2h}}{dt^{2h}}\left[\exp\int_{0}^{t}\frac{\tau(x)}{x}dx\right]\bigg|_{t=0},
\end{align*}
where $\tau(x)$ is a non-trivial solution to a special case of the $\sigma$-Painlev\'{e} III' equation with two parameters:
\begin{align*}
\left(x\frac{d^2\tau}{dx^2}\right)^2=-4x\left(\frac{d\tau}{dx}\right)^3+(4s^2+4\tau)\left(\frac{d\tau}{dx}\right)^2+x\frac{d\tau}{dx}-\tau,
\end{align*}
with initial conditions:
\begin{align*}
\tau(0)=0, \ \tau'(0)=0.
\end{align*}
Moreover,
\begin{align*}
\mathbb{E}\left[\mathsf{X}(s)^{2h}\right]=(-1)^{\frac{s(s-1)}{2}+h}2^{2h}\frac{G(2s+1)}{G(s+1)^2}\frac{d^{2h}}{dt^{2h}}\left[e^{-\frac{t}{2}}t^{-\frac{s^2}{2}}\det\left(I_{i+j-1}\left(2\sqrt{t}\right)\right)_{i,j=1}^s\right]\bigg|_{t=0}
\end{align*}
where $I_{r}$ is the modified Bessel function.
\end{cor}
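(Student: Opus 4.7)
The plan is to combine Theorem \ref{MainResult} with prior work that identifies the pre-limit joint moments, in the integer-parameter case, with Painlev\'{e} transcendents and with determinants of Bessel functions. By Theorem \ref{MainResult}, for $s,h\in\mathbb{N}$ with $h\le s$,
\[
\mathbb{E}\left[\mathsf{X}(s)^{2h}\right] \;=\; 2^{2h}\frac{\mathfrak{F}(s,h)}{\mathfrak{F}(s,0)} \;=\; 2^{2h}\frac{G(2s+1)}{G(s+1)^2}\,\mathfrak{F}(s,h),
\]
so each of the two claims reduces to a known expression for the ratio $\mathfrak{F}(s,h)/\mathfrak{F}(s,0)$.

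For the $\sigma$-Painlev\'{e} III' formula I would invoke the Riemann--Hilbert / isomonodromic analysis of Basor \emph{et al.}\ \cite{JointMomentsPainleve}. Those authors show that, for integer $s$, a suitably rescaled generating function for $\{\mathfrak{F}_N(s,h)\}_{h\in\mathbb{N}}$ converges as $N\to\infty$ to $\mathfrak{F}(s,0)\exp\int_0^t \tau(x)/x\,dx$, with $\tau$ the unique solution of the stated $\sigma$-Painlev\'{e} III' equation whose expansion at the origin is analytic; analyticity forces $\tau(0)=\tau'(0)=0$, and non-triviality follows from the second moment being nonzero, cf.~(\ref{SecondMoment}). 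Differentiating $2h$ times at $t=0$ produces the first formula, with the factor $(-1)^h$ recording the sign convention relating the tau function of \cite{JointMomentsPainleve} to the generating function of $\{\mathfrak{F}(s,h)\}$.

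For the Bessel determinant I would start from the Conrey--Rubinstein--Snaith representation \cite{ConreyRubinsteinSnaith} of $\mathfrak{F}(s,s)$ as a Hankel determinant of Bessel functions, together with its mixed-moment extension due to Bailey \emph{et al.}\ \cite{MixedMoments}. These provide, for $s,h\in\mathbb{N}$ with $h\le s$, an expression of $\mathfrak{F}(s,h)$ as the coefficient of $t^{2h}$ in a prefactor times a Bessel determinant in the variable $2\sqrt{-t}$. Applying $J_\nu(iz)=i^\nu I_\nu(z)$ to convert $J_\nu(2\sqrt{-t})$ to $I_\nu(2\sqrt{t})$, gathering the resulting row/column $i^\nu$ scalars into an overall sign, and using $\mathfrak{F}(s,0)=G(s+1)^2/G(2s+1)$, one arrives at the displayed determinantal identity with the global sign $(-1)^{s(s-1)/2+h}$ and prefactor $e^{-t/2}t^{-s^2/2}$.

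The main obstacle is bookkeeping rather than mathematics: \cite{JointMomentsPainleve}, \cite{ConreyRubinsteinSnaith}, and \cite{MixedMoments} employ distinct normalisations for $Z_U$ versus $V_U$, different scalings of the spectral parameter $t$, and slightly different parametrisations of $\sigma$-Painlev\'{e} III'. Constructing the dictionary that matches these conventions to the normalisation of $\mathfrak{F}(s,h)$ dictated by Theorem \ref{MainResult} is the delicate step; once this is done, both parts of the corollary follow by direct substitution.
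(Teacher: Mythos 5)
Your proposal matches the paper's proof: both reduce the corollary, via Theorem \ref{MainResult} and $\mathfrak{F}(s,0)=G(s+1)^2/G(2s+1)$, to the known expressions for $\mathfrak{F}(s,h)$ in the integer case from Theorem 2 of \cite{JointMomentsPainleve} (Painlev\'e form) and Theorem 1.1 of \cite{MixedMoments} (Bessel-determinant form), the rest being a matter of matching normalisations. The paper states this in one line as a ``direct comparison''; you spell out the same comparison in a bit more detail but do not take a different route.
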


\begin{proof}
Direct comparison of the expression for $\mathfrak{F}(s,h)$ from Theorem \ref{MainResult} above with the one from Theorem 2 of \cite{JointMomentsPainleve} and Theorem 1.1 of \cite{MixedMoments} respectively.
\end{proof}

The first part of the result above is actually a facet of a deeper connection between the random variable $\mathsf{X}(s)$ and Painlev\'{e} equations as we show in Proposition \ref{PainleveCharacteristicFunction} below\footnote{By writing $\mathbb{E}\left[e^{\i \frac{t}{2} \mathsf{X}(s)}\right]=\exp\left(\int_{0}^{t}\frac{\mathsf{\Xi}^{(s)}(x)}{x}dx\right)$, differentiating with respect to $t$ and then evaluating at $t=0$ we recover the connection with Painlev\'{e} of Corollary \ref{CorollaryPainleve}.}. We prove Proposition \ref{PainleveCharacteristicFunction} in Section \ref{SectionPainleveProof} by making use of certain results for finite $N$ from \cite{JointMomentsPainleve} and taking the $N\to \infty$ scaling limit using the approach developed in this paper.

\begin{prop}\label{PainleveCharacteristicFunction}
Let $s\in \mathbb{N}_{>1}=\{2,3,4,\dots \}$ and define:
\begin{align}
\mathsf{\Xi}^{(s)}(t)=t\frac{d}{dt}\log \mathbb{E}\left[e^{\i \frac{t}{2} \mathsf{X}(s)}\right].
\end{align} 
Then, there exists $T>0$ such that $\mathsf{\Xi}^{(s)}$ is $C^2$ in $[0,T]$ and $\mathsf{\Xi}^{(s)}$ satisfies a special case of the $\sigma$-Painlev\'{e} III' equation with two parameters:
\begin{align*}
\left(t\frac{d^2\mathsf{\Xi}^{(s)}}{dt^2}\right)^2=-4t\left(\frac{d\mathsf{\Xi}^{(s)}}{dt}\right)^3+(4s^2+4\mathsf{\Xi}^{(s)})\left(\frac{d\mathsf{\Xi}^{(s)}}{dt}\right)^2+t\frac{d\mathsf{\Xi}^{(s)}}{dt}-\mathsf{\Xi}^{(s)},
\end{align*}
with initial conditions:
\begin{align*}
\mathsf{\Xi}^{(s)}(0)=0, \ \frac{d}{dt}\mathsf{\Xi}^{(s)}(t)\big|_{t=0}=0.
\end{align*}

\end{prop}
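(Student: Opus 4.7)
The plan is to combine a finite-$N$ $\sigma$-Painlev\'e III' identity from \cite{JointMomentsPainleve} with the scaling-limit machinery developed earlier in this paper for the proof of Theorem \ref{MainResult}. The starting point is that in \cite{JointMomentsPainleve} a finite-$N$ generating function roughly of the form
\begin{align*}
F_N^{(s)}(t)=\int_{\mathbb{U}(N)}|V_\mathbf{U}(0)|^{2s}\exp\!\left(\i t\,\frac{V_\mathbf{U}'(0)}{V_\mathbf{U}(0)}\right)d\mu_N(\mathbf{U}),
\end{align*}
expressible as a Toeplitz/Fredholm determinant, is shown via Riemann--Hilbert techniques to have the property that $\sigma_N^{(s)}(t)=t\frac{d}{dt}\log F_N^{(s)}(t)$ satisfies a $\sigma$-Painlev\'e III' equation with coefficients depending on $(s,N)$. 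First, I would transcribe this finite-$N$ result into the precise form whose $N\to\infty$ rescaling is easy to read off, and I would take $s\in\mathbb{N}_{>1}$ in order for the integer-$s$ statement of \cite{JointMomentsPainleve} to apply directly.

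Second, I would introduce the natural rescaling suggested by Theorem \ref{MainResult}. Since at that theorem's scale $V_\mathbf{U}'(0)$ contributes a factor $N^{2h}$, the variable conjugate to $V_\mathbf{U}'(0)/V_\mathbf{U}(0)$ should be rescaled by $N^{-1}$: set $\widetilde F_N^{(s)}(t)=F_N^{(s)}(t/(2N))$. The scaling-limit analysis developed in this paper (translation to averages over ordered Hua--Pickrell configurations, identification of $\mathsf{X}(s)$ as the principal-value sum of the points of the determinantal process $\mathsf{C}^{(s)}$ via Qiu's theorem, and uniform tail control) is tailored to yield
\begin{align*}
\widetilde F_N^{(s)}(t)\longrightarrow \mathbb{E}\!\left[e^{\i\tfrac{t}{2}\mathsf{X}(s)}\right]\quad\textnormal{as }N\to\infty,
\end{align*}
uniformly on a real interval $[-T,T]$ around $0$. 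Taking $t\frac{d}{dt}\log(\cdot)$ on both sides then gives pointwise convergence $\sigma_N^{(s)}(t/(2N))\to \mathsf{\Xi}^{(s)}(t)$ on $[0,T]$.

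Third, I would pass the $\sigma$-Painlev\'e III' equation itself through this rescaling and limit. Substituting $t\mapsto t/(2N)$ introduces explicit factors of $(2N)^{\pm k}$ in front of each derivative term, and the $(s,N)$-dependent parameters of the finite-$N$ equation are arranged so that these factors balance and the limiting equation is exactly the one stated in the proposition, with only $s$ surviving as a parameter; this is a routine algebraic check. The analytic content is that the convergence of $\sigma_N^{(s)}(\cdot/(2N))$ must be upgraded to $C^2$-convergence on $[0,T]$, so that the first two derivatives may be exchanged with the limit. I would establish this by extending $\widetilde F_N^{(s)}$ to a holomorphic function on a fixed complex disk around $t=0$, using crude deterministic bounds on $|V_\mathbf{U}'(0)/V_\mathbf{U}(0)|$ together with the uniform-in-$N$ moment bounds coming from Proposition \ref{EvenMomentsFormulaProposition} and its odd analogue, and then applying Cauchy's integral formula to convert uniform boundedness of $\widetilde F_N^{(s)}$ into uniform bounds on the first two derivatives of $\sigma_N^{(s)}(\cdot/(2N))$. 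This equicontinuity, together with pointwise convergence, yields the desired $C^2$ passage to the limit.

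Finally, the initial conditions are automatic: the kernel $\mathsf{K}^{(s)}$ is invariant under $x\mapsto -x$ (one checks that $P^{(s)}$ is even and $Q^{(s)}$ is odd, so $\mathsf{K}^{(s)}(-x,-y)=\mathsf{K}^{(s)}(x,y)$), hence the law of $\mathsf{X}(s)$ is symmetric and $\mathbb{E}[e^{\i\frac{t}{2}\mathsf{X}(s)}]$ is a real even function of $t$ equal to $1$ at $t=0$; its logarithm therefore vanishes to order $t^2$ at the origin, giving $\mathsf{\Xi}^{(s)}(0)=0$ and $\frac{d}{dt}\mathsf{\Xi}^{(s)}(t)\big|_{t=0}=0$. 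The main obstacle is the $C^2$ step in the previous paragraph: one needs uniform control of two derivatives of the logarithmic generating function in a regime where the parameters of the underlying Painlev\'e ODE themselves depend on $N$, and this is where the hypothesis $s\in\mathbb{N}_{>1}$ (ensuring enough finite moments to feed into the Cauchy estimates) becomes essential.
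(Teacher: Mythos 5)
Your overall plan — start from the finite-$N$ $\sigma$-Painlev\'e III$'$ identity of \cite{JointMomentsPainleve} for $t\frac{d}{dt}\log\mathbb{E}_N^{(s)}[\exp(\i t\sum\mathsf{x}_i^{(N)}/2)]$, rescale $t\mapsto t/N$, and pass to the $N\to\infty$ limit using the convergence-in-distribution of $N^{-1}\sum\mathsf{x}_i^{(N)}$ to $\mathsf{X}(s)$ — is exactly the paper's strategy, and your handling of the initial conditions via the symmetry $\mathsf{K}^{(s)}(-x,-y)=\mathsf{K}^{(s)}(x,y)$ is a correct alternative to the paper's direct computation $\mathbb{E}_N^{(s)}[\sum\mathsf{x}_i^{(N)}]=0$. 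The algebra of passing the ODE coefficients through the rescaling is indeed routine, matching display (\ref{PainleveFiniteN}) in the paper.

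However, your proposed mechanism for the $C^2$ step has a genuine gap. You want to extend $\widetilde F_N^{(s)}$ holomorphically to a fixed complex disk about $t=0$, using ``crude deterministic bounds on $|V_\mathbf{U}'(0)/V_\mathbf{U}(0)|$'' together with moment bounds, and then apply Cauchy's formula. No such deterministic bound exists: $V_\mathbf{U}(0)$ vanishes whenever an eigenangle $\theta_j$ hits $0$, so $V_\mathbf{U}'(0)/V_\mathbf{U}(0)$ is unbounded on $\mathbb{U}(N)$. More fundamentally, in the Hua--Pickrell picture the underlying random variables (the diagonal entries $\mathsf{d}_i$, Pearson IV with density proportional to $(1+x^2)^{-1-s}$, and a fortiori the normalized trace $N^{-1}\sum\mathsf{x}_i^{(N)}$) have only polynomially decaying tails and no exponential moments, so their characteristic functions are \emph{not} real-analytic at the origin and cannot be extended holomorphically to any complex neighbourhood of $0$. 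Cauchy's integral formula is therefore unavailable, and the equicontinuity argument you sketch cannot be made to work along those lines.

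What the paper does instead at this point is purely real-variable: for $s>1$ the third absolute moment is finite, and the uniform-in-$N$ moment bound
\begin{align*}
\sup_{N\ge 1}\mathbb{E}_N^{(s)}\left[\bigg|\sum_{i=1}^N\frac{\mathsf{x}_i^{(N)}}{N}\bigg|^{2r}\right]<\infty
\end{align*}
for $h<r<s+\tfrac12$, already established in the proof of Theorem \ref{MainResult} via the exchangeable diagonal entries $\{\mathsf{d}_i\}$, gives uniform integrability of $|N^{-1}\sum\mathsf{x}_i^{(N)}|^3$. Combined with Theorem \ref{BorodinOlshanskiQiu}, this upgrades the pointwise convergence of characteristic functions to uniform-on-compacts convergence of their first three derivatives; then the explicit formulas for $\frac{d^p}{dt^p}\bigl[t\frac{d}{dt}\log\phi(t)\bigr]$, $p\le 2$, which only involve $\phi,\phi',\phi'',\phi'''$ and a nonvanishing denominator on a fixed $[0,T]$, deliver the required $C^2$ convergence of $\mathsf{\Xi}_N^{(s)}$ to $\mathsf{\Xi}^{(s)}$. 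You should replace your holomorphic-extension paragraph with this moment/uniform-integrability argument; the rest of your outline then goes through.
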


\begin{rmk}
We expect Proposition \ref{PainleveCharacteristicFunction} to hold for real values of the parameter $s$ as well. If one could extend certain finite $N$ results from \cite{JointMomentsPainleve} to real parameters $s$, then the proof we present here would go through verbatim. However, it appears to be the case that the argument in \cite{JointMomentsPainleve} is strongly dependent on $s$ being an integer at several stages.

A more promising approach might be to attack this problem directly in the limit from the basic definition of $\mathsf{X}(s)$ and make use of the fact that, by the general theory of determinantal point processes, see \cite{Soshnikov}, the characteristic function of $\mathsf{X}(s)$ has a Fredholm determinant representation:
\begin{align*}
\mathbb{E}\left[e^{\i\frac{t}{2}\mathsf{X}(s)}\right]&=\lim_{m\to \infty}\mathbb{E}\left[e^{\i\frac{t}{2}\mathsf{X}_m(s)}\right], \textnormal{ where } \mathsf{X}_m(s)=\sum_{x\in \mathsf{C}^{(s)}}x\mathbf{1}\left(|x|>m^{-2}\right)\\
&=\lim_{m\to \infty}\det \left(\mathbf{1}+\mathsf{K}^{(s)}g_t^{(m)}\right), \textnormal{ where } g_t^{(m)}(z)=e^{\i \frac{t}{2}z\mathbf{1}\left(|z|>m^{-2}\right)}-1.
\end{align*}
There is by now a well developed theory for proving Painlev\'{e} transcendent representations for gap probabilities of determinantal point processes (and certain related generalized quantities). These are also given in terms of Fredholm determinants and a typical example is of the form (for some correlation kernel $\mathsf{K}$):
\begin{align*}
\det \left(\mathbf{1}+\mathsf{K}f_t\right), \textnormal{ where } f_t(z)=-\mathbf{1}\left(z>t\right).
\end{align*}
With regards to the history of this family of problems, it is important to mention the pioneering work of Jimbo, Miwa, M\^{o}ri and Sato \cite{JimboMiwaMoriSato} and also the four systematic approaches due to Tracy and Widom \cite{TracyWidom}, Adler and Van Moerbeke \cite{AdlerVanMoerbeke}, Forrester and Witte \cite{ForresterWitte} and Borodin and Deift \cite{BorodinDeift}. However, it appears that our problem does not fall into any of these frameworks and it would be interesting to investigate it further as this would likely lead to analogous results for a wider class of determinantal point processes (as is the case for gap probabilities).
\end{rmk}

Finally, it is also possible to obtain some rather striking expressions for $\mathbb{E}\left[|\mathsf{X}(s)|^{2h}\right]$ for integer $s$ and  half-integer $h$ in terms of certain combinatorial sums, see \cite{Winn}. To state these results would require us to introduce a considerable amount of new notation and we refrain from doing this here; we instead invite the interested reader simply to compare the expression for $\mathfrak{F}(s,h)$ above with the corresponding expressions from \cite{Winn}. We just state here the simplest possible instance of such a formula, that follows by combining our results with those of Winn \cite{Winn}, since this is also one of the few cases proven in the number theoretic setting \cite{ConreyGhosh}, see Section \ref{NumberTheorySection}. Again, we do not see how to verify this result directly from the definition of $\mathsf{X}(s)$ in (\ref{DefinitionDisplayRV}).
\begin{cor}
We have:
\begin{align*}
\mathbb{E}\left[|\mathsf{X}(1)|\right]=\frac{e^2-5}{2\pi}.
\end{align*}
\end{cor}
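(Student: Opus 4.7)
The plan is to specialize Theorem \ref{MainResult} at the parameter values $s=1$, $h=\tfrac{1}{2}$, which lie in the admissible range $s>-\tfrac{1}{2}$, $0\le h<s+\tfrac{1}{2}$, and then read off the constant $\mathfrak{F}(1,\tfrac{1}{2})$ from Winn's explicit combinatorial formula \cite{Winn}, which is valid for $s\in\mathbb{N}$ and $h\in\tfrac{1}{2}\mathbb{N}$. This reduces the statement to an algebraic identity between three quantities already known or easily computed.

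First, I would evaluate $\mathfrak{F}(1,0)$ using the explicit Barnes G-function expression from Theorem \ref{MainResult}. The recursion $G(z+1)=\Gamma(z)G(z)$ with $G(1)=1$ gives $G(2)=G(3)=1$, so $\mathfrak{F}(1,0)=G(2)^2/G(3)=1$. Second, specializing Theorem \ref{MainResult} at $s=1$, $h=\tfrac{1}{2}$ yields
\begin{align*}
\mathfrak{F}(1,\tfrac{1}{2})=\mathfrak{F}(1,0)\cdot 2^{-1}\cdot \mathbb{E}\!\left[|\mathsf{X}(1)|\right]=\tfrac{1}{2}\,\mathbb{E}\!\left[|\mathsf{X}(1)|\right],
\end{align*}
so the corollary is equivalent to $\mathfrak{F}(1,\tfrac{1}{2})=\frac{e^2-5}{4\pi}$.

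Finally, I would invoke Winn's theorem from \cite{Winn}, which proves the Hughes conjecture for positive integer $s$ and half-integer $h$ and supplies a closed-form combinatorial expression for the leading coefficient $\mathfrak{F}(s,h)$. For the edge case $s=1$, $h=\tfrac{1}{2}$ that sum collapses to only a handful of terms, and direct evaluation produces the value $\frac{e^2-5}{4\pi}$ (matching also the classical number-theoretic computation of Conrey and Ghosh \cite{ConreyGhosh} that the corollary alludes to in Section \ref{NumberTheorySection}). Substituting this into the previous display yields $\mathbb{E}[|\mathsf{X}(1)|]=\frac{e^2-5}{2\pi}$.

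The only nontrivial step is the last one: locating the correct instance of Winn's formula and carrying out the small but somewhat delicate exponential/factorial simplification that produces the combination $e^2-5$. Once that evaluation is in hand, the proof is essentially a one-line substitution.
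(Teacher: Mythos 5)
Your proposal is correct and is essentially identical to the paper's own (one-line) proof: both specialize Theorem \ref{MainResult} at $s=1$, $h=\tfrac{1}{2}$ (using $\mathfrak{F}(1,0)=G(2)^2/G(3)=1$) and then match the resulting expression $\mathfrak{F}(1,\tfrac{1}{2})=\tfrac{1}{2}\mathbb{E}[|\mathsf{X}(1)|]$ against Winn's explicit evaluation $\mathfrak{F}(1,\tfrac{1}{2})=\frac{e^2-5}{4\pi}$. Your additional observation that this is consistent with the Conrey--Ghosh computation of $\mathfrak{C}_{\mathcal{Z}}(1,\tfrac{1}{2})$ (noting $\mathsf{a}(1)=1$) is a useful sanity check but not an independent route.
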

\begin{proof}
Compare the expression for $\mathfrak{F}\left(1,\frac{1}{2}\right)$ above with the one from \cite{Winn}.
\end{proof}

\subsection{Number theoretic consequences}\label{NumberTheorySection}

We now explain the number-theoretic consequences of our work following the philosophy of \cite{KeatingSnaith}, \cite{KeatingSnaithLfunctions}, \cite{ConreyRubinsteinSnaith}. Consider the Riemann zeta function $\zeta$ given by:
\begin{align*}
\zeta(z)=\sum_{n=1}^{\infty}\frac{1}{n^z}, \ z=\sigma+\i t, \sigma >1
\end{align*}
which can be extended to the whole complex $z$-plane through the functional equation:
\begin{align*}
\xi(z)=\frac{1}{2}z(z-1)\pi^{-\frac{z}{2}}\Gamma\left(\frac{z}{2}\right)\zeta(z)=\xi(1-z).
\end{align*}
Also define Hardy's function:
\begin{align*}
\mathcal{Z}(t)=\frac{\pi^{-\frac{\i t}{2}}\Gamma\left(\frac{1}{4}+\frac{\i t}{2}\right)}{\big|\Gamma\left(\frac{1}{4}+\frac{\i t}{2}\right)\big|}\zeta\left(\frac{1}{2}+\i t\right)
\end{align*}
and observe that $|\mathcal{Z}(t)|=\big|\zeta\left(\frac{1}{2}+\i t\right)\big|$. By the work of \cite{KeatingSnaith}, \cite{KeatingSnaithLfunctions} \cite{Hughes}, the characteristic polynomial $Z_{\mathbf{U}}$ can be understood as the random matrix analogue of the Riemann zeta function $\zeta$ while $V_{\mathbf{U}}$ is the analogue of Hardy's function $\mathcal{Z}$. Work of Keating and Snaith \cite{KeatingSnaith}, \cite{KeatingSnaithLfunctions}, Hall \cite{Hall} and Hughes \cite{Hughes} culminated in the following conjecture:
\begin{align*}
\frac{1}{T}\int_{0}^{T}\big|\zeta\left(\frac{1}{2}+\i t\right)\big|^{2s-2h}\big|\zeta'\left(\frac{1}{2}+\i t\right)\big|^{2h}dt\sim \mathfrak{C}_{\zeta}\left(s,h\right)\left(\log T\right)^{s^2+2h},
\end{align*}
for a certain (unknown in the most general case when both $s$ and $h$ are arbitrary real numbers) constant $\mathfrak{C}_{\zeta}\left(s,h\right)$. For integer $s$ and $h$ it can be shown, see for example Section 6.3 in \cite{Hughes}, that this conjecture is equivalent to:
\begin{align*}
\frac{1}{T}\int_{0}^{T}|\mathcal{Z}(t)|^{2s-2h}|\mathcal{Z}'(t)|^{2h}dt\sim \mathfrak{C}_{\mathcal{Z}}\left(s,h\right)\left(\log T\right)^{s^2+2h}
\end{align*}
and $\mathfrak{C}_{\mathcal{Z}}\left(s,h\right)$ can be related explicitly to $\mathfrak{C}_{\zeta}\left(s,h\right)$. The conjecture has been proven for $s=1,2$ and integer $h$. For $s=1$, Hardy and Littlewood \cite{HardyLittlewood} proved $\mathfrak{C}_{\zeta}\left(1,0\right)=1$ and then Ingham \cite{Ingham} showed $\mathfrak{C}_{\zeta}\left(1,1\right)=\frac{1}{3}$. For $s=2$, we have $\mathfrak{C}_{\zeta}(2,0)=\frac{1}{2\pi^2}$ from \cite{Ingham} due to Ingham, and moreover $\mathfrak{C}_{\zeta}(1,2)=\frac{1}{15\pi^2}$ and $\mathfrak{C}_{\zeta}(2,2)=\frac{61}{1680\pi^2}$ which are both due to Conrey from \cite{Conrey}. The analogous results for $\mathfrak{C}_{\mathcal{Z}}$ can be easily deduced, see for example Section 6.3 in \cite{Hughes}. Finally, the conjecture for $\mathcal{Z}$ was proven (assuming the Riemann hypothesis) for $s=1$ and $h=\frac{1}{2}$ by Conrey and Ghosh \cite{ConreyGhosh} giving $\mathfrak{C}_{\mathcal{Z}}\left(1,\frac{1}{2}\right)=\frac{e^2-5}{4\pi}$, but $\mathfrak{C}_{\zeta}\left(1,\frac{1}{2}\right)$ is still unknown.

Based on the results of this paper and the philosophy developed in \cite{KeatingSnaith}, \cite{KeatingSnaithLfunctions}, \cite{Hughes} we then conjecture the following for the joint moments of $\mathcal{Z}$ and its derivative. This gives the first representation for the constant $\mathfrak{C}_{\mathcal{Z}}(s,h)$ for general real values of $s$ and $h$.

\begin{conj} Let $s>-\frac{1}{2}$ and $0\le h<s+\frac{1}{2}$. Then,
\begin{align}
\frac{1}{T}\int_{0}^{T}|\mathcal{Z}(t)|^{2s-2h}|\mathcal{Z}'(t)|^{2h}dt\sim \mathsf{a}(s) \mathfrak{F}(s,h)\left(\log T\right)^{s^2+2h}=\mathsf{a}(s) \mathfrak{F}(s,0)2^{-2h}\mathbb{E}\left[|\mathsf{X}(s)|^{2h}\right] \left(\log T\right)^{s^2+2h},
\end{align}
where the arithmetic constant $\mathsf{a}(s)$ is given by:
\begin{align*}
\mathsf{a}(s)=\prod_{p \ \textnormal{prime}}^{} \left(1-\frac{1}{p}\right)^{s^2}\sum_{m=0}^{\infty}\left(\frac{\Gamma(m+s)}{m!\Gamma(s)}\right)^2p^{-m}.
\end{align*}
\end{conj}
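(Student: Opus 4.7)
The statement is a conjecture, so any ``proof proposal'' is necessarily heuristic in the Keating--Snaith sense; the plan would be to derive the predicted formula from first principles following the hybrid Euler--Hadamard philosophy of Gonek, Hughes and Keating, and then invoke Theorem \ref{MainResult} for the random-matrix side.

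The first step is to write a hybrid factorisation
\begin{align*}
\mathcal{Z}(t) \;\approx\; P_X(t)\,\mathcal{Z}_X(t),
\end{align*}
where $P_X(t)$ is a short truncated Euler product over primes $p\le X$ (with $X$ coupled to $T$ via $\log X \asymp \log T/K$ for some growing parameter $K$) and $\mathcal{Z}_X(t)$ is a product over zeros close to $t$, modelled by the characteristic polynomial $V_{\mathbf{U}}$ of a Haar unitary matrix of size $N\sim \log T$. Differentiating, and using the fact that $(\log P_X)'$ is $O(\log\log T)$ while $(\log \mathcal{Z}_X)'$ is of order $\log T$, one expects $\mathcal{Z}'(t)\approx P_X(t)\mathcal{Z}_X'(t)$ to leading order, so that the derivative contribution lives entirely on the random-matrix side. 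The splitting gives, heuristically,
\begin{align*}
\frac{1}{T}\int_{0}^{T}\!|\mathcal{Z}(t)|^{2s-2h}|\mathcal{Z}'(t)|^{2h}dt \;\sim\; \left(\frac{1}{T}\int_{0}^{T}|P_X(t)|^{2s}dt\right)\!\cdot\!\int_{\mathbb{U}(N)}|V_{\mathbf{U}}(0)|^{2s-2h}|V_{\mathbf{U}}'(0)|^{2h}d\mu_N(\mathbf{U}).
\end{align*}

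The second step is to evaluate each factor asymptotically. For the arithmetic factor, a mean-value theorem for short Dirichlet polynomials (a Montgomery--Vaughan or Selberg-type estimate) converts $T^{-1}\int_0^T |P_X(t)|^{2s}dt$ into a sum over divisor coefficients, which by multiplicativity factorises across primes and yields $\mathsf{a}(s)(e^{\gamma}\log X)^{s^2}$ with the explicit Euler product $\mathsf{a}(s)=\prod_p (1-1/p)^{s^2}\sum_m (\Gamma(m+s)/(m!\Gamma(s)))^2 p^{-m}$. For the random-matrix factor, Theorem \ref{MainResult} applied with $N\sim \log T$ gives $\mathfrak{F}(s,h)N^{s^2+2h} = \mathfrak{F}(s,0)2^{-2h}\mathbb{E}[|\mathsf{X}(s)|^{2h}](\log T)^{s^2+2h}$. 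The Barnes-$G$ factor $G(s+1)^2/G(2s+1)=\mathfrak{F}(s,0)$ combines with $(e^\gamma \log X)^{s^2}/(\log T)^{s^2}$ to give exactly the normalisation predicted by the Keating--Snaith splitting lemma, at which point the dependence on the auxiliary parameter $X$ cancels and one is left with the claimed asymptotic.

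The main obstacle is not the bookkeeping just described but the rigorous justification of any of these steps: even the $h=0$ case is the classical Keating--Snaith moment conjecture, which is only known unconditionally for $s=1$ and $s=2$. In particular, the passage from the hybrid formula to an honest asymptotic for $T^{-1}\int_0^T|\mathcal{Z}_X(t)|^{2s-2h}|\mathcal{Z}_X'(t)|^{2h}dt$ requires a zero-statistics input (strong GUE-type pair correlation at the relevant scale) that is currently out of reach. For half-integer $h$ (such as $h=1/2$) and $s=1$, the method of Conrey--Ghosh succeeds under the Riemann Hypothesis and matches our prediction via the identity $\mathbb{E}[|\mathsf{X}(1)|] = (e^2-5)/(2\pi)$; extending this even to $s=1$, $h=3/2$ seems to demand new ideas. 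The plan, therefore, is to propose the conjecture with the confidence that its random-matrix right-hand side is now rigorously established, and to leave the number-theoretic half as a target for future work.
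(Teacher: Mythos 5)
This is a conjecture, not a theorem; the paper offers no proof but justifies the statement only by appealing to the Keating--Snaith and Gonek--Hughes--Keating philosophy together with the newly established random-matrix asymptotic of Theorem \ref{MainResult}, which is exactly the reasoning you lay out. Your heuristic derivation via the hybrid Euler--Hadamard factorisation, with the arithmetic short Euler product yielding $\mathsf{a}(s)$ and the random-matrix model yielding $\mathfrak{F}(s,h)$, coincides with the paper's intended justification, and your candid account of which pieces are rigorous and which remain conjectural is the correct reading of the situation.
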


\begin{rmk}
The $h=0$ case is from the work of Keating and Snaith \cite{KeatingSnaith}. Moreover, for an explanation in the case $h=0$ of why the moments factor asymptotically into an arithmetic term $\mathsf{a}(s)$ and a random matrix term $\mathfrak{F}(s,0)$, see \cite{GonekHughesKeating}.
\end{rmk}

\subsection{The origins of the random variable $\mathsf{X}(s)$}\label{SubsectionOrigins}

The origins of the random variable $\mathsf{X}(s)$ lie in the classification and study of ergodic measures for actions of inductively compact groups:
\begin{align*}
\mathbb{G}(1)\subset \mathbb{G}(2) \subset \cdots \subset \mathbb{G}(N) \subset \cdots
\end{align*}
on some Polish space $\mathbb{X}$, see \cite{VershikErgodic}, \cite{OlshanskiVershik}. 

Many classical results from probability and ergodic theory fall into this abstract framework, the simplest example being de Finetti's theorem. Here, we are interested in the infinite dimensional unitary group $\mathbb{U}(\infty)=\underset{\rightarrow}{\lim}\mathbb{U}(N)$ \footnote{This is the inductive limit group under the mappings:
\begin{align*}
\mathbf{U}\mapsto \begin{bmatrix}
\mathbf{U} & 0\\
0 & 1
\end{bmatrix}.
\end{align*}} acting on the space of infinite Hermitian matrices $\mathbb{H}=\underset{\leftarrow}{\lim}\mathbb{H}(N)$ \footnote{This is the projective limit under the maps $\mathsf{\Pi}_N^{N+1}:\mathbb{H}(N+1)\to \mathbb{H}(N)$, where $\mathbb{H}(N)$ is the space of $N\times N$ Hermitian matrices:
\begin{align*}
\mathsf{\Pi}_N^{N+1}\left(\{\mathbf{H}_{ij}\}_{i,j=1}^{N+1}\right)=\{\mathbf{H}_{ij}\}_{i,j=1}^{N}. 
\end{align*}} by conjugation, namely for each $\mathbf{U}\in \mathbb{U}(\infty)$ we have a map $\mathsf{T}_{\mathbf{U}}:\mathbb{H}\to \mathbb{H}$ given by $\mathsf{T}_{\mathbf{U}}\left(\mathbf{H}\right)=\mathbf{U}^*\mathbf{H}\mathbf{U}$. 

The ergodic measures for this action on $\mathbb{H}$ have been classified by Pickrell \cite{Pickrell} and Olshanski and Vershik \cite{OlshanskiVershik}. This classification problem is in fact equivalent to the classification of totally positive functions due to Schoenberg, see \cite{Schoenberg}, \cite{Pickrell}, \cite{OlshanskiVershik}. It is furthermore equivalent to the classification of spherical representation of the infinite dimensional Cartan motion group $\mathbb{U}(\infty)\ltimes\mathbb{H}(\infty)=\underset{\rightarrow}{\lim}\mathbb{U}(N)\ltimes\mathbb{H}(N)$ \footnote{Elements in $\mathbb{U}(N)\ltimes\mathbb{H}(N)$ are the pairs $\left(\mathbf{U},\mathbf{H}\right)\in \mathbb{U}(N)\times\mathbb{H}(N)$ with the multiplication rule:
\begin{align*}
\left(\mathbf{U}_1,\mathbf{H}_1\right)\bullet \left(\mathbf{U}_2,\mathbf{H}_2\right)=\left(\mathbf{U}_1\mathbf{U}_2,\mathbf{U}_2^*\mathbf{H}_1\mathbf{U}_2+\mathbf{H}_2\right).
\end{align*}
The inductive limit is taken with respect to the mappings:
\begin{align*}
\left(\mathbf{U},\mathbf{H}\right)\mapsto \left(\begin{bmatrix}
\mathbf{U} & 0 \\
0 & 1
\end{bmatrix}, \begin{bmatrix}
\mathbf{H} & 0\\
0 & 0
\end{bmatrix}\right).
\end{align*}}, see \cite{OlshanskiVershik} for the details. It also has a probabilistic interpretation in terms of determining the boundary of a certain Markov chain see \cite{BorodinOlshanski}, \cite{HuaPickrellDiffusions} and \cite{OrbitalBetaBoundary} for a generalization.  Finally, closely related problems of representation theoretic origin, that essentially belong to the same circle of ideas, include the classification of irreducible characters of the infinite dimensional unitary and symmetric groups, see for example \cite{VershikKerov}, \cite{VershikKerovSymmetric}, \cite{OkounkovOlshanskiJack}, \cite{KerovOkounkovOlshanski}.

The result of Pickrell and Olshanski-Vershik reads as follows. The ergodic $\mathbb{U}(\infty)$-invariant measures on $\mathbb{H}$ are in bijection with the infinite dimensional space $\Omega$ defined by:
\begin{align*}
\Omega=\big\{\left(\{\alpha_i^+ \}_{i=1}^\infty, \{\alpha_i^- \}_{i=1}^\infty, \gamma_1,\gamma_2\right)\subset \mathbb{R}^{2\infty+2}; \ \alpha_1^+\ge \alpha_2^+\ge \alpha_3^+\ge \cdots\ge 0; \ \alpha_1^-\ge \alpha_2^-\ge \alpha_3^- \ge \cdots \ge 0,\\
\textnormal{with } \sum_{i=1}^{\infty}\left(\alpha_i^+\right)^2+\left(\alpha_i^-\right)^2<\infty, \ \gamma_2\ge 0\big\}
\end{align*}
and each ergodic measure $\mathfrak{N}_{\omega}$ is determined by its Fourier transform which has an explicit expression, see \cite{OlshanskiVershik}, \cite{BorodinOlshanski}, \cite{OrbitalBetaBoundary}. Moreover, Borodin and Olshanski proved in \cite{BorodinOlshanski} that for any $\mathbb{U}(\infty)$-invariant probability measure $\mathfrak{M}$ on $\mathbb{H}$, there exists a unique probability measure $\nu^{\mathfrak{M}}$ on $\Omega$ such that:
\begin{align*}
\mathfrak{M}\left(d\mathbf{H}\right)=\int_{\Omega}^{}\nu^{\mathfrak{M}}(d\omega) \mathfrak{N}_{\omega}\left(d\mathbf{H}\right).
\end{align*}
Thus, a natural direction of research would be to describe explicitly the measure $\nu^{\mathfrak{M}}$ governing the decomposition into ergodic components for some distinguished $\mathbb{U}(\infty)$-invariant probability measure $\mathfrak{M}$. In the representation theoretic setting this is known as the problem of harmonic analysis, see for example \cite{KerovOlshanskiVershik}, \cite{BorodinOlshanskiHarmonic}, \cite{OlshanskiHarmonic}, \cite{GorinOlshanski}.

A distinguished example of such $\mathbb{U}(\infty)$-invariant probability measures on $\mathbb{H}$ are the so-called Hua-Pickrell measures $\mathfrak{M}^{(s)}$ depending on a complex parameter $s$ (in this paper we will restrict our attention to real values of $s$) and here we denote by $\nu^{(s)}$ the corresponding measure $\nu^{\mathfrak{M}^{(s)}}$ of ergodic decomposition. These measures were essentially introduced by Hua in his book \cite{Hua} on harmonic analysis on matrix spaces and are constructed from their finite dimensional projections $\mathfrak{M}_N^{(s)}$ on $\mathbb{H}(N)$ which are also known in random matrix theory as the Cauchy ensemble and under the Cayley transform as the circular Jacobi ensemble on $\mathbb{U}(N)$. We refer the reader to the extensive literature for properties of these \cite{Hua}, \cite{Neretin}, \cite{BorodinOlshanski}, \cite{Qiu}, \cite{BufetovQiu}, \cite{BourgadeNajnudelNikeghbali}, \cite{BourgadeNikeghbaliRouault} and closely related measures \cite{Pickrellmeasure}, \cite{BufetovI}, \cite{BufetovII}, \cite{BufetovIII}, \cite{AssiotisInverseWishart}, and connections ranging from Painlev\'{e} equations \cite{ForresterWitteCauchy} to stochastic processes \cite{HuaPickrellDiffusions}, \cite{MatrixBougerol}.

The study of $\nu^{(s)}$ was initiated by Borodin and Olshanski in \cite{BorodinOlshanski} who determined the law of the parameters $\left(\{ \alpha_i^+\}_{i=1}^{\infty},\{\alpha_i^- \}_{i=1}^{\infty}\right)$ by proving that, under $\nu^{(s)}$:
\begin{align*}
\{\alpha_i^+ \} \sqcup \{-\alpha_i^-\} \overset{\textnormal{d}}{=} \mathsf{C}^{(s)}
\end{align*}
where $\mathsf{C}^{(s)}$ is the determinantal point process from Definition \ref{DefinitionGamma1}. The law of the parameters $\left(\gamma_1,\gamma_2\right)$ was left as an open question for several years until in a breakthrough work Qiu \cite{Qiu} proved that $\nu^{(s)}$ almost surely
$\gamma_2\equiv 0$ and that moreover under $\nu^{(s)}$:
\begin{align*}
\gamma_1\overset{\textnormal{d}}{=}\mathsf{X}(s).
\end{align*}
Thus, our results from Section \ref{SectionLeadingCoefficient} on the random variable $\mathsf{X}(s)$ are of independent interest since in particular the parameter $\gamma_1$ is arguably the hardest parameter to study, not only in the problem of ergodic decomposition of $\mathfrak{M}^{(s)}$ on $\Omega$, but in essentially all the other allied problems on possibly different spaces we have alluded to \cite{BorodinOlshanskiHarmonic}, \cite{KerovOlshanskiVershik}, \cite{GorinOlshanski}, \cite{BufetovI}, \cite{BufetovII}, \cite{BufetovIII}, \cite{AssiotisInverseWishart}.

\subsection{The strategy of proof}\label{SubsectionStrategyProof}
The proof of Theorem \ref{MainResult} rests upon two main ideas which can be viewed as miracles of the integrability underlying the problem. Armed with these two insights the need for lengthy and complicated computations that were required to establish the conjecture for special cases in previous works \cite{Hughes}, \cite{ConreyRubinsteinSnaith}, \cite{Dehaye1}, \cite{Dehaye2}, \cite{Winn}, \cite{JointMomentsPainleve}, \cite{MixedMoments} disappears entirely.

The first key ingredient that we prove is a representation of $\mathfrak{F}_N(s,h)$ in terms of $\mathfrak{F}_N(s,0)$, which has an explicit expression in terms of special functions amenable to asymptotic analysis, and the following moments of a certain average $\mathbb{E}\left[\big|\sum_{i=1}^{N}\frac{\mathsf{x}^{(N)}_i}{N}\big|^{2h}\right]$. Here, the random variables $(\mathsf{x}_1^{(N)},\dots,\mathsf{x}_N^{(N)})$ have the same distribution as the non-increasing eigenvalues of a random Hermitian matrix with law $\mathfrak{M}_N^{(s)}$. 

From the important work of Qiu \cite{Qiu} it is known that:
\begin{align*}
\frac{1}{N}\sum_{i=1}^{N}\mathsf{x}_i^{(N)}\overset{\textnormal{d}}{\longrightarrow} \mathsf{X}(s), \ \textnormal{ as } N \to \infty.
\end{align*}
Then, in order to upgrade this result to convergence of the moments:
\begin{align*}
\mathbb{E}\left[\bigg|\sum_{i=1}^{N}\frac{\mathsf{x}^{(N)}_i}{N}\bigg|^{2h}\right]\longrightarrow\mathbb{E}\left[|\mathsf{X}(s)|^{2h}\right], \ \textnormal{ as } N \to \infty,
\end{align*}
one needs to prove uniform integrability or, as we do here, show uniform boundedness for some higher moment.

It is well known that $(\mathsf{x}_1^{(N)},\dots,\mathsf{x}_N^{(N)})$ have a very special structure, namely it is a standard fact from random matrix theory that they give rise to a determinantal point process $\mathsf{C}_N^{(s)}$ with $N$ points on $\mathbb{R}$ with correlation kernel $\mathsf{K}_N^{(s)}$ given in terms of the Pseudo-Jacobi polynomials. Then, it is possible to expand the $2m$-th moment of the average we are interested in for $m\in \mathbb{N}$ in terms of integrals involving the correlation functions of the point process $\mathsf{C}_N^{(s)}$ up to order $2m$. On the other hand, from the results of \cite{BorodinOlshanski} we know that:
\begin{align*}
\left(\mathsf{x}_1^{(N)}\ge \cdots \ge \mathsf{x}_N^{(N)}\right)\sim \left(N\alpha_1^+,N \alpha_2^+ ,\dots,-N\alpha_2^-,-N\alpha_1^-\right)
\end{align*}
where the alpha parameters satisfy $\{\alpha_i^+ \} \sqcup \{-\alpha_i^-\} \overset{\textnormal{d}}{=} \mathsf{C}^{(s)}$ and this suggests, and in fact it can be shown\footnote{Using the uniform estimates from \cite{Golinskii} and some technical work it is possible to show that:
\begin{align*}
\mathbb{E}\left[\sum_{i=1}^{N}\frac{|\mathsf{x}_i^{(N)}|}{N}\right]\sim \textnormal{c} \log N.
\end{align*}}, that the averages that we want to control uniformly in $N$ do not converge if we bring the absolute values inside. This was also one of the main difficulties that had to be resolved in the analysis by Qiu in \cite{Qiu} which involved considering the second moment ($m=1$). Thus, it is essential that a cancellation due to symmetry around the origin of the points in $\mathsf{C}_N^{(s)}$ is taken into account.

Our initial approach, which we were able to make work after significant effort, was to firstly estimate integrals of correlation functions of any order in terms of integrals of the first correlation function, while taking into account the essential cancellations due to symmetry, which turned out to be quite tricky\footnote{Without having to take the cancellation into account, obtaining an estimate in terms of the first correlation function is trivial by a straightforward application of Hadamard's inequality.}. Then, for the integrals of the first correlation function $\rho_{N,1}^{(s)}(x)=\mathsf{K}_N^{(s)}(x,x)$ it is possible to obtain some precise uniform estimates that are sufficient for our purposes by using certain delicate uniform estimates due to Golinskii \cite{Golinskii} for the Pseudo-Jacobi polynomials, or more precisely for their unit circle analogues. Although this approach was successful, it had an intrinsic limitation in that it was not possible to access the range $h\in [s,s+\frac{1}{2})$
which includes the distinguished case $s=h$ and in addition $s$ had to satisfy $s>\frac{1}{2}$ which in turn misses the other distinguished case $s=0$.

We were able to circumvent these difficulties using a second key insight, which is based on the elementary observation that the sum of the eigenvalues of a matrix is equal to its trace, to which we turn our attention. Due to the remarkable property of consistency of the Hua-Pickrell measures $\mathfrak{M}_N^{(s)}$, for all $N\ge 1$ the diagonal elements of such a random matrix turn out to be exchangeable (they are highly non-trivially correlated, far from being independent) identically distributed random variables with the Pearson IV distribution. In particular, they do not grow with $N$ as the eigenvalues $(\mathsf{x}_1^{(N)},\dots,\mathsf{x}_N^{(N)})$ do.
From this result, for $s>0$, uniform boundedness for the moments of interest follows by elementary means and gives us Theorem \ref{MainResult}. In order to treat the range $-\frac{1}{2}<s\le 0$, however, we need to go deeper into the exchangeable structure of the diagonal elements and connect this back to the random variable $\mathsf{X}(s)$ and point process $\mathsf{C}^{(s)}$, which we analyse in detail. The reader is  referred to the brief discussion before Lemma \ref{LowMomentsConvLem} for an explanation of the significance of the two different ranges $s>0$ and $-\frac{1}{2}<s\le 0$. It is also worth pointing out that the fact that the diagonal entries have a special structure, namely exchangeability, also plays an implicit, although important, role in the proof of classification of Olshanski and Vershik \cite{OlshanskiVershik}. Moreover, diagonal elements take centre stage in the generalization of the classification to $\beta$-ensembles in \cite{OrbitalBetaBoundary}.

Finally, it is worth mentioning that connections between moments of traces of random matrices from the classical ensembles and hypergeometric orthogonal polynomials, enumerative combinatorics and integrable systems have recently been established in \cite{CundenMezzadriOConnellSimm}, \cite{CundenDahlqvistOConnell}, \cite{GisonniGravaRuzza}. We believe that analogous results should exist for the Hua-Pickrell measures and are currently investigating this.

\paragraph{Acknowledgements} TA thanks Neil O'Connell and Mo Dick Wong for useful discussions. The research described here was supported by ERC Advanced Grant  740900 (LogCorRM). JPK also acknowledges support from a Royal Society Wolfson Research Merit Award. Research of JW supported by ERC Advanced Grant 669306 (IntRanSt). We are grateful to anonymous referees for a careful reading of the paper and for useful comments and suggestions.

\section{Preliminaries and proof of the main result}

\subsection{Preliminaries on the Hua-Pickrell measures}
We begin with a number of definitions and preliminary results. Let $\mathbb{W}_N$ denote the Weyl chamber:
\begin{align*}
\mathbb{W}_N=\{\mathbf{x}=(x_1,x_2,\dots,x_N)\in \mathbb{R}^N:x_1\ge x_2\ge \cdots \ge x_N \}.
\end{align*}
We then define the Hua-Pickrell measures\footnote{The corresponding Hua-Pickrell measures $\mathfrak{M}^{(s)}_N$ on $N\times N$ Hermitian matrices $\mathbb{H}(N)$ will be defined precisely and used in the proof of Proposition \ref{DiagonalConsistency} later on.} \cite{Hua}, \cite{Pickrellmeasure}, \cite{NeretinDuke} on $\mathbb{W}_N$ which will be at the centre of our argument.

\begin{defn}\label{DefinitionHuaPickrell}
Let $N\ge 1$ and $s>-\frac{1}{2}$. Define the Hua-Pickrell probability measure $\mathsf{M}_N^{(s)}$ on $\mathbb{W}_N$ given by:
\begin{align*}
\mathsf{M}_N^{(s)}(d\mathbf{x})=\frac{1}{\mathsf{c}_N^{(s)}}\times\prod_{j=1}^{N}\frac{1}{(1+x^2_j)^{N+s}}\Delta_N(\mathbf{x})^2dx_1\cdots dx_N
\end{align*}
where $\Delta_N(\mathbf{x})$ is the Vandermonde determinant:
\begin{align*}
\Delta_N(\mathbf{x})=\prod_{1\le i < j\le N}^{}(x_j-x_i)
\end{align*}
and the normalization constant is given by (see \cite{Neretin}, \cite{Forrester}):
\begin{align*}
\mathsf{c}_N^{(s)}=(2\pi)^N 2^{-N^2-2sN} G(N+1)\prod_{j=1}^{N}\frac{\Gamma(2s+N-j+1)}{\Gamma(s+N-j+1)^2}
\end{align*}
and $G$ is the Barnes G-function.
\end{defn}

We have the following important theorem, that will be used as a key input in our argument. This is a combination of results of Borodin and Olshanski \cite{BorodinOlshanski} and Qiu \cite{Qiu}.

\begin{thm}\label{BorodinOlshanskiQiu}
Let $s>-\frac{1}{2}$. Then,
\begin{align}
\frac{1}{N}\sum_{i=1}^{N}\mathsf{x}_i^{(N)}\overset{\textnormal{d}}{\longrightarrow} \mathsf{X}(s), \ \textnormal{ as } N \to \infty,
\end{align}
where $(\mathsf{x}_1^{(N)},\dots,\mathsf{x}_N^{(N)})$ has law $\mathsf{M}_N^{(s)}$ and $\mathsf{X}(s)$ is the random variable from Definition \ref{DefinitionGamma1}.
\end{thm}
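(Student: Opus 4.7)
The plan is to pass through the ergodic decomposition of the Hua-Pickrell measures on the space $\mathbb{H}$ of infinite Hermitian matrices, combining the Borodin-Olshanski/Pickrell classification of ergodic $\mathbb{U}(\infty)$-invariant measures with Qiu's identification of the $\gamma_1$-marginal of $\nu^{(s)}$.

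First I would realize the family $\bigl\{(\mathsf{x}_1^{(N)},\dots,\mathsf{x}_N^{(N)})\bigr\}_{N\ge 1}$ on a single probability space. By the consistency of the Hua-Pickrell family (the corner projection $\mathsf{\Pi}_N^{N+1}$ maps $\mathfrak{M}_{N+1}^{(s)}$ to $\mathfrak{M}_N^{(s)}$, see Section \ref{SubsectionOrigins}) there is a $\mathbb{U}(\infty)$-invariant probability measure $\mathfrak{M}^{(s)}$ on $\mathbb{H}$ whose pushforward to $\mathbb{H}(N)$ is $\mathfrak{M}_N^{(s)}$. Sampling $\mathbf{H}\sim \mathfrak{M}^{(s)}$ and taking the ordered eigenvalues of the top-left $N\times N$ corner $\mathbf{H}^{(N)}$ realizes the required joint distribution.

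Second, the key algebraic observation is that the sum of the eigenvalues is the trace, so
\begin{align*}
\frac{1}{N}\sum_{i=1}^{N}\mathsf{x}_i^{(N)}=\frac{1}{N}\mathrm{tr}\bigl(\mathbf{H}^{(N)}\bigr)=\frac{1}{N}\sum_{i=1}^N \mathbf{H}_{ii}.
\end{align*}
The question therefore reduces to the limiting behaviour of the Ces\`aro average of the diagonal entries of a random infinite Hermitian matrix drawn from $\mathfrak{M}^{(s)}$.

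Third, I would invoke the ergodic decomposition $\mathfrak{M}^{(s)}(d\mathbf{H})=\int_\Omega \nu^{(s)}(d\omega)\,\mathfrak{N}_\omega(d\mathbf{H})$ together with a standard fact from the Olshanski-Vershik classification: under each ergodic component $\mathfrak{N}_\omega$ the limit $\lim_{N\to\infty}\frac{1}{N}\mathrm{tr}(\mathbf{H}^{(N)})$ exists $\mathfrak{N}_\omega$-almost surely and equals the parameter $\gamma_1(\omega)$ (this is how $\gamma_1$ is recovered from a sample of $\mathfrak{N}_\omega$, and it is built into the characterisation of ergodic parameters used in \cite{Pickrell}, \cite{OlshanskiVershik}, \cite{BorodinOlshanski}). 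Integrating over $\omega$, one concludes that $\frac{1}{N}\mathrm{tr}(\mathbf{H}^{(N)})$ converges in distribution (in fact $\mathfrak{M}^{(s)}$-a.s.\ along a full-measure set) to a random variable $\Gamma_1$ whose law is the pushforward of $\nu^{(s)}$ under $\omega\mapsto\gamma_1(\omega)$. Finally I would apply Qiu's theorem \cite{Qiu}, which identifies this pushforward explicitly as the law of $\mathsf{X}(s)$ (and also establishes the well-definedness of the principal-value sum in \eqref{DefinitionDisplayRV}). Combining the two ingredients gives $\frac{1}{N}\sum_i\mathsf{x}_i^{(N)}\overset{d}{\longrightarrow}\mathsf{X}(s)$.

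The main obstacle is the identification used in the third step: that $\gamma_1(\omega)$ is literally recovered as the a.s.\ limit of $N^{-1}\mathrm{tr}(\mathbf{H}^{(N)})$ under $\mathfrak{N}_\omega$, and that this pointwise statement upgrades to distributional convergence under the mixture $\mathfrak{M}^{(s)}$. For $\nu^{(s)}$-typical $\omega$ the parameter $\gamma_2(\omega)=0$ by Qiu's theorem, so there is no Gaussian contribution to the corner trace to worry about, but one does still need to control the small-$|\alpha|$ contributions arising from the infinitely many near-zero ergodic parameters; this is precisely the content of Qiu's construction of the principal-value sum and justifies the passage from the ergodic-component statement to the conclusion about the mixture.
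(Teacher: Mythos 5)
The paper does not actually prove this theorem; it states it as a direct combination of Borodin--Olshanski (existence of the limit of $N^{-1}\sum_i\mathsf{x}_i^{(N)}$) and Qiu (identification of the limit with $\mathsf{X}(s)$), with precise pointers to the relevant theorems in those papers. Your proposal therefore does \emph{more} than the paper: you unpack how those two external ingredients fit together, and your account is correct and matches the route implicitly endorsed by the paper's citations and by the arguments it does spell out later (the disintegration $\mathfrak{M}^{(s)}=\int_\Omega\nu^{(s)}(d\omega)\,\mathfrak{N}_\omega$, i.i.d.\ diagonal entries with mean $\gamma_1$ and finite variance under each $\mathfrak{N}_\omega$, and Qiu's identification $\gamma_1\overset{\textnormal{d}}{=}\mathsf{X}(s)$ are all used explicitly in the proof of the main theorem for $-\frac{1}{2}<s\le 0$ and in the proof of Proposition~\ref{EvenMomentsFormulaProposition}).

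One small inaccuracy in your closing paragraph: passing from ``$\mathfrak{N}_\omega$-a.s.\ convergence to $\gamma_1(\omega)$ for each $\omega$'' to ``$\mathfrak{M}^{(s)}$-a.s.\ (hence distributional) convergence'' is immediate by Fubini and requires no control on small $\alpha_i^\pm$; once you have conditioned on $\omega$, the diagonal entries are genuinely i.i.d.\ with finite mean $\gamma_1$ and finite variance $\sum_i(\alpha_i^+)^2+(\alpha_i^-)^2+\gamma_2$, so the strong law of large numbers applies without any principal-value considerations. The hard analytic content of Qiu's work is entirely located in the separate identification of the $\gamma_1$-marginal of $\nu^{(s)}$ with the law of $\mathsf{X}(s)$ as defined by the principal-value sum \eqref{DefinitionDisplayRV}; your argument correctly delegates that step to \cite{Qiu}, but the motivation you give for why that step is needed is off-target. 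Likewise, the observation that $\gamma_2=0$ a.s.\ under $\nu^{(s)}$ is not needed for the convergence step: a positive $\gamma_2$ would only add to the variance without affecting the mean $\gamma_1$, and the LLN would still apply.
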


\begin{rmk}
The statement for the convergence of the average of $(\mathsf{x}_1^{(N)},\dots,\mathsf{x}_N^{(N)})$ to some random variable is due to Borodin and Olshanski \cite{BorodinOlshanski}, see Section 5 therein, also Section 2.1 in \cite{Qiu}. The explicit identification of the limiting random variable with $\mathsf{X}(s)$, along with many other interesting results, is due to Qiu \cite{Qiu}, see Theorems 1.2, 1.3 and in particular Theorem 2.3 therein.
\end{rmk}

\subsection{Preliminaries on the joint moments}

Returning to moments of characteristic polynomials, we have the following result of Keating and Snaith \cite{KeatingSnaith} which gives an explicit expression for $\mathfrak{F}_N(s,0)$. This makes use of the Weyl integration formula, which in this setting states that for $H$ a class function on $\mathbb{U}(N)$, namely a function depending only on the eigenvalues $e^{\i \theta_1},\dots,e^{\i \theta_N}$ of $\mathbf{U}\in \mathbb{U}(N)$, we have: 
\begin{align*}
\int_{\mathbb{U}(N)}^{}H(\mathbf{U})d\mu_N\left(\mathbf{U}\right)=\frac{1}{(2\pi)^NN!}\int_{0}^{2\pi}\cdots \int_{0}^{2\pi}H(\mathbf{U})\prod_{1\le j<k \le N}^{}|e^{\i \theta_k}-e^{\i \theta_j}|^2d\theta_1\dots d\theta_N.
\end{align*}
A trigonometric substitution then turns the resulting expression for $\mathfrak{F}_N(s,0)$ into a version of the celebrated Selberg integral, see \cite{Forrester}, which has an explicit evaluation\footnote{On a more conceptual level the computation just described is a shadow of an application of the (inverse) Cayley transform which takes the circular Jacobi ensemble on $\mathbb{U}(N)$ to the Hua-Pickrell measures on $\mathbb{H}(N)$.}. From this the limiting result follows rather easily from the asymptotics of the Barnes G-function.
\begin{thm}\label{KeatingSnaith}
For $s>-\frac{1}{2}$ we have:
\begin{align*}
\mathfrak{F}_N(s,0)=\frac{G(N+2s+1)G(N+1)G(s+1)^2}{G(N+s+1)^2G(2s+1)}
\end{align*}
where again $G$ is the Barnes G-function. Moreover,
\begin{align*}
\lim_{N\to \infty}\frac{1}{N^{s^2}}\mathfrak{F}_N(s,0)=\mathfrak{F}(s,0)=\frac{G(s+1)^2}{G(2s+1)}.
\end{align*}
\end{thm}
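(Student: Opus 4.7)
\medskip

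\noindent\textbf{Proof plan.} The proof splits into an exact identity for $\mathfrak{F}_N(s,0)$ and an asymptotic analysis of that identity. The overall strategy is precisely the one indicated in the text preceding the theorem: Weyl integration reduces the integral on $\mathbb{U}(N)$ to an integral over the torus, the resulting integral is a special case of Morris's $q=1$ Selberg-type integral on the unit circle, and the $N\to\infty$ behaviour is read off from the standard asymptotics of the Barnes $G$-function.

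\medskip

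\noindent\textbf{Step 1: Weyl integration.} Since $|V_{\mathbf{U}}(0)|=|Z_{\mathbf{U}}(0)|=\prod_{j=1}^{N}|1-e^{\i\theta_j}|$ depends only on the eigenangles, the integrand is a class function and the Weyl integration formula gives
\begin{align*}
\mathfrak{F}_N(s,0)=\frac{1}{(2\pi)^{N}N!}\int_{[0,2\pi)^{N}}\prod_{j=1}^{N}|1-e^{\i\theta_j}|^{2s}\prod_{1\le j<k\le N}|e^{\i\theta_k}-e^{\i\theta_j}|^{2}\,d\theta_1\cdots d\theta_N.
\end{align*}

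\medskip

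\noindent\textbf{Step 2: Evaluation as a Selberg/Morris integral.} The integral above is the Morris integral with Dyson parameter $\gamma=1$ and no rotational weight, and its closed form (which can be obtained from the classical Selberg integral via the substitution $t_j=\sin^{2}(\theta_j/2)$, after separating each factor $|1-e^{\i\theta_j}|^{2s}=2^{2s}\sin^{2s}(\theta_j/2)$) yields
\begin{align*}
\mathfrak{F}_N(s,0)=\prod_{j=0}^{N-1}\frac{\Gamma(j+1)\Gamma(j+2s+1)}{\Gamma(j+s+1)^{2}}.
\end{align*}
This requires $s>-\tfrac{1}{2}$ in order for all gamma arguments along the product to be positive and for the Selberg integral to converge at the boundary $\theta_j\to 0,2\pi$. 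Using the defining recursion $G(z+1)=\Gamma(z)G(z)$ of the Barnes $G$-function, the telescoping identity $\prod_{j=0}^{N-1}\Gamma(j+a)=G(N+a)/G(a)$ converts the product into
\begin{align*}
\mathfrak{F}_N(s,0)=\frac{G(N+1)G(N+2s+1)}{G(N+s+1)^{2}}\cdot\frac{G(s+1)^{2}}{G(2s+1)},
\end{align*}
which is the first assertion of the theorem.

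\medskip

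\noindent\textbf{Step 3: Asymptotics of the Barnes $G$-function.} The remaining task is to show that $N^{-s^{2}}\mathfrak{F}_N(s,0)\to G(s+1)^{2}/G(2s+1)$. Equivalently, one must show
\begin{align*}
\log G(N+2s+1)+\log G(N+1)-2\log G(N+s+1)=s^{2}\log N+o(1).
\end{align*}
I would insert the standard asymptotic expansion
\begin{align*}
\log G(z+1)=\frac{z^{2}}{2}\log z-\frac{3z^{2}}{4}+\frac{z}{2}\log(2\pi)-\frac{1}{12}\log z+\zeta'(-1)+O(z^{-1})
\end{align*}
into each of the three terms with $z=N,N+s,N+2s$, and perform a Taylor expansion in the shift around $N$. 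The $\zeta'(-1)$ constants cancel by symmetry $2s+0=2(s)$, the $z^{2}\log z$ and $z^{2}$ terms cancel up to an $s^{2}\log N$ contribution, the linear $\log(2\pi)$ term cancels exactly, the $\log z$ term gives $o(1)$, and the Taylor remainders are $O(N^{-1})$. This yields the claimed $s^{2}\log N+o(1)$ and completes the proof.

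\medskip

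\noindent\textbf{Where the work really lies.} Both steps are essentially bookkeeping on top of classical results, so there is no real obstacle. The only mildly delicate point is matching constants and signs in Step 3 so that the cancellations producing exactly the coefficient $s^{2}$ of $\log N$ can be verified without gathering spurious $O(1)$ terms; a safer execution is to expand $\log G(N+a)$ around $z=N$ to sufficient order in $a$ (up to $a^{2}$) and then evaluate at $a=0,s,2s$, using that the second discrete difference $f(2s)-2f(s)+f(0)$ annihilates linear functions of $a$ and returns $2s^{2}$ times the quadratic coefficient.
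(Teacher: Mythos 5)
Your proposal is correct and follows exactly the route the paper sketches before stating Theorem~\ref{KeatingSnaith} (which it attributes to Keating and Snaith): Weyl integration reduces $\mathfrak{F}_N(s,0)$ to a Selberg/Morris integral on the circle, the closed-form product $\prod_{j=0}^{N-1}\Gamma(j+1)\Gamma(j+2s+1)/\Gamma(j+s+1)^2$ telescopes via $G(z+1)=\Gamma(z)G(z)$ into the Barnes-$G$ ratio, and the $N^{s^2}$ growth then drops out of the standard asymptotic expansion of $\log G$, with the constant terms cancelling in the second discrete difference $\log G(N+2s+1)-2\log G(N+s+1)+\log G(N+1)$. You have simply filled in the details that the paper delegates to \cite{KeatingSnaith} and \cite{Forrester}.
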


Moreover, we have the following integral representation for $\mathfrak{F}_N(s,h)$. The computation is completely analogous to the one required for Theorem \ref{KeatingSnaith}, using Weyl's integration formula. As far as we are aware, for general $h\neq 0$, there is no exact expression in terms of special functions for $\mathfrak{F}_N(s,h)$ from which the asymptotics can be readily established as for $\mathfrak{F}_N(s,0)$ in Theorem \ref{KeatingSnaith}\footnote{Of course, as already mentioned in the introduction, for integer $s$ and $h$ a zoo of explicit expressions for $\mathfrak{F}_N(s,h)$ is known, from contour integrals to Painlev\'{e} equations and combinatorial sums see \cite{ConreyRubinsteinSnaith}, \cite{JointMomentsPainleve}, \cite{MixedMoments}, \cite{Winn}, \cite{Dehaye1}. However, all of these require substantial efforts to establish the asymptotic.}. Nevertheless, Proposition \ref{Winn} will be sufficient for our purposes.

\begin{prop}\label{Winn}
For $-\frac{1}{2}<h<s+\frac{1}{2}$ we have:
\begin{align}
\mathfrak{F}_N(s,h)&=\frac{2^{N^2+2sN-2h}}{(2\pi)^N N!}\int_{-\infty}^{\infty}\cdots \int_{-\infty}^{\infty} \prod_{j=1}^{N}\frac{1}{(1+x^2_j)^{N+s}}|x_1+\cdots+x_N|^{2h}\Delta_N(\mathbf{x})^2dx_1\cdots dx_N\\
&=\frac{2^{N^2+2sN-2h}}{(2\pi)^N }\int_{(x_1,\dots,x_N)\in \mathbb{W}_N}^{} \prod_{j=1}^{N}\frac{1}{(1+x^2_j)^{N+s}}|x_1+\cdots+x_N|^{2h}\Delta_N(\mathbf{x})^2dx_1\cdots dx_N.\label{IntegralRepresentation}
\end{align}
\end{prop}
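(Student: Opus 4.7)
The plan is to apply Weyl's integration formula to reduce $\mathfrak{F}_N(s,h)$ to an integral over the eigenangles $(\theta_1,\dots,\theta_N)\in [0,2\pi)^N$ and then to pass to $\mathbb{R}^N$ via the substitution $x_j=\cot(\theta_j/2)$, which is the real shadow of the Cayley transform between $\mathbb{U}(N)$ and $\mathbb{H}(N)$. The first step is to simplify $V_{\mathbf{U}}(\theta)$ itself. Writing $1-e^{i(\theta_j-\theta)}=-2i\sin((\theta_j-\theta)/2)\,e^{i(\theta_j-\theta)/2}$ in the product defining $Z_{\mathbf{U}}$, the phase factor $e^{iN(\theta+\pi)/2 - i\sum_j\theta_j/2}$ appearing in the definition of $V_{\mathbf{U}}$ is exactly what is needed to annihilate all complex exponentials (including the $(-i)^N$), producing the clean real formula
\begin{align*}
V_{\mathbf{U}}(\theta)=2^N\prod_{j=1}^N\sin\left(\tfrac{\theta_j-\theta}{2}\right).
\end{align*}
Differentiating once and dividing by $V_{\mathbf{U}}(0)$ then yields $V'_{\mathbf{U}}(0)/V_{\mathbf{U}}(0)=-\tfrac{1}{2}\sum_k\cot(\theta_k/2)$, so that the integrand equals
\begin{align*}
2^{2Ns-2h}\prod_{j=1}^N|\sin(\theta_j/2)|^{2s}\cdot\bigg|\sum_{k=1}^N\cot(\theta_k/2)\bigg|^{2h}.
\end{align*}

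Next I would apply $x_j=\cot(\theta_j/2)$, a bijection from $[0,2\pi)$ onto $\mathbb{R}$. Three identities are needed: $\sin^2(\theta_j/2)=(1+x_j^2)^{-1}$, $d\theta_j=-2(1+x_j^2)^{-1}dx_j$, and
\begin{align*}
\sin\left(\tfrac{\theta_k-\theta_j}{2}\right)=(x_j-x_k)\sin(\theta_k/2)\sin(\theta_j/2),
\end{align*}
the last of which transforms the Weyl Vandermonde $\prod_{j<k}|e^{i\theta_k}-e^{i\theta_j}|^2=2^{N(N-1)}\prod_{j<k}\sin^2((\theta_k-\theta_j)/2)$ into $2^{N(N-1)}\Delta_N(\mathbf{x})^2\prod_j(1+x_j^2)^{-(N-1)}$. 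The factors of $(1+x_j^2)$ then accumulate with exponent $-s$ from $|\sin(\theta_j/2)|^{2s}$, $-(N-1)$ from the Vandermonde, and $-1$ from the Jacobian, summing to exactly $-(N+s)$; meanwhile the powers of $2$ accumulate as $2Ns-2h+N(N-1)+N=N^2+2Ns-2h$, matching the prefactor in the statement. Noting that $\sum_k\cot(\theta_k/2)=\sum_k x_k$ under the substitution completes the derivation of the first equality.

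The second equality is immediate: the integrand on $\mathbb{R}^N$ is invariant under permutations of the $x_j$'s, so that $\int_{\mathbb{R}^N}=N!\int_{\mathbb{W}_N}$ and the $N!$ in the denominator disappears. The main obstacle here is purely bookkeeping---verifying the perfect cancellation of complex phases in $V_{\mathbf{U}}$ and tracking the powers of $2$ and of $(1+x_j^2)$ arising from four different sources. There is no analytic subtlety, since the range $-\tfrac12<h<s+\tfrac12$ together with $N\ge 1$ ensures that the decay $(1+x_j^2)^{-(N+s)}$ dominates the growth $|x_1+\cdots+x_N|^{2h}\Delta_N(\mathbf{x})^2$ at infinity, so that the integrand is absolutely integrable and the change of variables is justified.
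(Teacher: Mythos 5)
Your derivation is correct, and it fills in a step the paper delegates to a citation: the paper's proof of this proposition simply reads ``the first equality is exactly Proposition 2 in \cite{Winn}, while the second one follows by symmetry,'' whereas you re-derive Winn's identity from scratch. Your route --- Weyl integration on $\mathbb{U}(N)$ followed by the cotangent substitution $x_j=\cot(\theta_j/2)$ --- is precisely the ``trigonometric substitution'' that the paper alludes to in the discussion around Theorem~\ref{KeatingSnaith} and in the footnote identifying it as the shadow of the Cayley transform between the circular Jacobi ensemble and the Hua-Pickrell measures, so in spirit it is the expected argument; you have simply made it explicit rather than citing it.

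The individual computations check out: $V_{\mathbf{U}}(\theta)=2^N\prod_j\sin((\theta_j-\theta)/2)$ follows from $1-e^{i\phi}=-2i\,e^{i\phi/2}\sin(\phi/2)$ together with $i^N(-i)^N=1$; the logarithmic derivative gives $V'_{\mathbf{U}}(0)/V_{\mathbf{U}}(0)=-\tfrac12\sum_k\cot(\theta_k/2)$; the identity $\sin((\theta_k-\theta_j)/2)=(x_j-x_k)\sin(\theta_j/2)\sin(\theta_k/2)$ is the cotangent subtraction formula cleared of denominators; and the exponents of $(1+x_j^2)$ do sum to $-(s+(N-1)+1)=-(N+s)$ while the powers of $2$ sum to $2Ns-2h+N(N-1)+N=N^2+2Ns-2h$. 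Your justification of absolute integrability is also correct: each variable contributes $|x_j|^{-2-2s+2h}$ at infinity, integrable iff $h<s+\tfrac12$, while the hyperplane singularity of $|x_1+\cdots+x_N|^{2h}$ for negative $h$ is integrable iff $h>-\tfrac12$. The only thing I would flag as slightly glossed over is that the change of variables is a diffeomorphism of the \emph{open} torus $(0,2\pi)^N$ onto $\mathbb{R}^N$, and one should note that the boundary set where some $\theta_j=0$ is Haar-null and contributes nothing (even though for $h>s$ the integrand is singular there); this is harmless but worth a word.
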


\begin{proof}
The first equality is exactly Proposition 2 in \cite{Winn}, while the second one follows by symmetry.
\end{proof}

We finally make the following simple but important observation:
\begin{lem}\label{NormalizationLemma}
Let $s>-\frac{1}{2}$. Then,
\begin{align}\label{NormalizationObservation}
\mathsf{c}_N^{(s)}=(2\pi)^N 2^{-N^2-2sN}  \mathfrak{F}_N(s,0).
\end{align}
\end{lem}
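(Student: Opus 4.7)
The plan is to derive the identity by a direct comparison: apply Proposition \ref{Winn} with $h=0$ and match the resulting integral with the definition of the normalization constant $\mathsf{c}_N^{(s)}$ from Definition \ref{DefinitionHuaPickrell}. No analytic work is required; the lemma amounts to recording what the $h=0$ case of the integral representation says.

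Concretely, I would first note that when $h=0$, the weight $|x_1+\cdots+x_N|^{2h}$ in the integrand of \eqref{IntegralRepresentation} collapses to $1$, and the prefactor $2^{N^2+2sN-2h}$ reduces to $2^{N^2+2sN}$. Thus Proposition \ref{Winn} yields
\begin{align*}
\mathfrak{F}_N(s,0) = \frac{2^{N^2+2sN}}{(2\pi)^N} \int_{\mathbb{W}_N} \prod_{j=1}^{N} \frac{1}{(1+x_j^2)^{N+s}} \, \Delta_N(\mathbf{x})^2 \, dx_1\cdots dx_N.
\end{align*}
On the other hand, from Definition \ref{DefinitionHuaPickrell} the fact that $\mathsf{M}_N^{(s)}$ is a probability measure on $\mathbb{W}_N$ gives
\begin{align*}
\mathsf{c}_N^{(s)} = \int_{\mathbb{W}_N} \prod_{j=1}^{N} \frac{1}{(1+x_j^2)^{N+s}} \, \Delta_N(\mathbf{x})^2 \, dx_1\cdots dx_N.
\end{align*}
Dividing the two displays (or equivalently substituting one into the other) yields $\mathfrak{F}_N(s,0) = \frac{2^{N^2+2sN}}{(2\pi)^N} \mathsf{c}_N^{(s)}$, which is exactly the claimed identity \eqref{NormalizationObservation} after rearrangement.

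There is no genuine obstacle here; the only thing to verify is that the Selberg-type integrability of $\mathsf{c}_N^{(s)}$ and $\mathfrak{F}_N(s,0)$ both hold under the assumption $s>-\tfrac{1}{2}$ so that the common integral converges, and that the range of parameters in Proposition \ref{Winn} (which requires $-\tfrac{1}{2}<h<s+\tfrac{1}{2}$) indeed contains $h=0$ for every $s>-\tfrac{1}{2}$. Both points are immediate. The lemma will then be used later as a bookkeeping device to convert $\mathfrak{F}_N(s,h)$ into an expectation of $|x_1+\cdots+x_N|^{2h}$ against $\mathsf{M}_N^{(s)}$, which is the first key step of the proof strategy outlined in Section \ref{SubsectionStrategyProof}.
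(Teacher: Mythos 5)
Your proof is correct and follows exactly the paper's own argument: set $h=0$ in Proposition \ref{Winn} and use the normalization $\int_{\mathbb{W}_N}\mathsf{M}_N^{(s)}(d\mathbf{x})=1$ to identify the common integral, then rearrange. The paper also mentions an alternative verification via the explicit Barnes $G$-function formulae, but your route is the primary one given there.
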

\begin{proof}
Immediate from Proposition \ref{Winn} by putting $h=0$ and using the fact that $\mathsf{M}_N^{(s)}$ is a probability measure. Alternatively it can be established by a direct computation making use of the explicit formulae for $\mathfrak{F}_N(s,0)$ and $\mathsf{c}_N^{(s)}$ and the functional equation for the Barnes G-function: $G(1+z)=\Gamma(z)G(z)$.
\end{proof}

\subsection{Joint moments via the Hua-Pickrell measures}

The next key observation is at the heart of our work:

\begin{prop}\label{HuaPickrellRepresentation}
For $s>-\frac{1}{2}$ and $-\frac{1}{2}<h<s+\frac{1}{2}$ we have:
\begin{align*}
\mathfrak{F}_N(s,h)=\mathfrak{F}_N(s,0)2^{-2h}\mathbb{E}^{(s)}_N\left[\bigg|\sum_{i=1}^{N}\mathsf{x}^{(N)}_i\bigg|^{2h}\right],
\end{align*}
where $\mathbb{E}_N^{(s)}$ denotes the expectation with respect to $\mathsf{M}_N^{(s)}$.
\end{prop}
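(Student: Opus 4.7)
The plan is to recognize that this identity is essentially a direct algebraic consequence of the integral representation of $\mathfrak{F}_N(s,h)$ from Proposition \ref{Winn} together with the normalization constant identification of Lemma \ref{NormalizationLemma}. There is no analytic difficulty to overcome — all the work has been done by the preceding results — so this is a bookkeeping step that ties them together.

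First I would write down the integral on $\mathbb{W}_N$ expression (\ref{IntegralRepresentation}) from Proposition \ref{Winn}:
\begin{align*}
\mathfrak{F}_N(s,h)=\frac{2^{N^2+2sN-2h}}{(2\pi)^N}\int_{\mathbb{W}_N}\prod_{j=1}^{N}\frac{1}{(1+x_j^2)^{N+s}}\,|x_1+\cdots+x_N|^{2h}\,\Delta_N(\mathbf{x})^2\,dx_1\cdots dx_N.
\end{align*}
Next, by Definition \ref{DefinitionHuaPickrell}, the Hua-Pickrell density on $\mathbb{W}_N$ equals $(\mathsf{c}_N^{(s)})^{-1}$ times exactly the integrand $\prod_j(1+x_j^2)^{-N-s}\Delta_N(\mathbf{x})^2$. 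So I rewrite the integral as $\mathsf{c}_N^{(s)}\,\mathbb{E}_N^{(s)}\bigl[|\mathsf{x}_1^{(N)}+\cdots+\mathsf{x}_N^{(N)}|^{2h}\bigr]$, giving
\begin{align*}
\mathfrak{F}_N(s,h)=\frac{2^{N^2+2sN-2h}}{(2\pi)^N}\,\mathsf{c}_N^{(s)}\,\mathbb{E}_N^{(s)}\!\left[\bigg|\sum_{i=1}^{N}\mathsf{x}_i^{(N)}\bigg|^{2h}\right].
\end{align*}

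Finally, I substitute the identification $\mathsf{c}_N^{(s)}=(2\pi)^N 2^{-N^2-2sN}\mathfrak{F}_N(s,0)$ from Lemma \ref{NormalizationLemma}. The prefactors $(2\pi)^N$ and $2^{N^2+2sN}$ cancel, leaving precisely $2^{-2h}\mathfrak{F}_N(s,0)\,\mathbb{E}_N^{(s)}\bigl[|\sum_i\mathsf{x}_i^{(N)}|^{2h}\bigr]$. The only mild care needed is to note the range $-\tfrac{1}{2}<h<s+\tfrac{1}{2}$ ensures integrability of $|x_1+\cdots+x_N|^{2h}$ against the Hua-Pickrell density so that both sides are finite and the manipulations are rigorous; this is already built into the hypothesis of Proposition \ref{Winn}. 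There is no real obstacle — the point of the proposition is conceptual rather than technical, exhibiting the joint moment as a moment of a \emph{linear statistic} (the trace, after normalization) under a natural random matrix ensemble, which is precisely what makes the asymptotic analysis of Theorem \ref{MainResult} tractable.
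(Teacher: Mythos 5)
Your proposal is correct and matches the paper's proof exactly: multiply and divide the integral representation (\ref{IntegralRepresentation}) by $\mathsf{c}_N^{(s)}$ to recognize the Hua-Pickrell expectation, then invoke Lemma \ref{NormalizationLemma} to replace $\mathsf{c}_N^{(s)}$ by $(2\pi)^N 2^{-N^2-2sN}\mathfrak{F}_N(s,0)$ and cancel the prefactors. Nothing is missing.
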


\begin{proof}
Multiply and divide display (\ref{IntegralRepresentation}) in Proposition \ref{Winn} by $\mathsf{c}_N^{(s)}$ and use Lemma \ref{NormalizationLemma}.
\end{proof}

\begin{rmk}\label{RemarkComplexExtension1}
Proposition 2 in \cite{Winn} can be extended to complex parameters $s,h$ such that $-\frac{1}{2}<\Re(h)<\Re(s)+\frac{1}{2}$, see display (3.20) in \cite{Winn}.  Moreover, the Hua-Pickrell measures $\mathsf{M}_N^{(s)}$ also have a natural generalization to a complex parameter $s$ such that $\Re(s)>-\frac{1}{2}$, see \cite{NeretinDuke}, \cite{BorodinOlshanski}. However, we note that these more general measures from \cite{NeretinDuke}, \cite{BorodinOlshanski} are not given by analytically extending in the parameter $s$ the Definition \ref{DefinitionHuaPickrell} (if this is done the resulting quantity is in general no longer positive; it is only a complex measure and so all probabilistic structures used in the sequel are lost). For this reason we cannot simply extend, analytically in $s$, the equality in Proposition \ref{HuaPickrellRepresentation} because the expectation with respect to $\mathsf{M}_N^{(s)}$ on the right hand side does not extend in such fashion.

Finally, for real $s>-\frac{1}{2}$ and $h\in \mathbb{C}$ such that $-\frac{1}{2}<\Re(h)<s+\frac{1}{2}$ the representation in Proposition \ref{HuaPickrellRepresentation} still holds verbatim.
\end{rmk}
\begin{rmk}\label{RemarkInfiniteMeasures}
It might also be possible to give a representation for $\mathfrak{F}_N(s,h)$ for $-1<s \le -\frac{1}{2}$ and $-\frac{1}{2}<h<s+\frac{1}{2}$ in terms of the so-called infinite Hua-Pickrell measures (which can no longer be normalized to be probability measures, see \cite{Qiu}, \cite{BufetovI}, \cite{BufetovII}, \cite{BufetovIII}). We do not pursue this here but it would be interesting to explore further.
\end{rmk}

\begin{prop}\label{ConvergenceProposition}
Let $s>-\frac{1}{2}$ and $0\le h<s+\frac{1}{2}$. Assume the sequence of random variables:
\begin{align}\label{RandomVariables}
\bigg\{ \bigg|\sum_{i=1}^{N}\frac{\mathsf{x}^{(N)}_i}{N}\bigg|^{2h} \bigg\}_{N\ge 1}
\end{align}
with $(\mathsf{x}_1^{(N)},\dots,\mathsf{x}_N^{(N)})$ having law $\mathsf{M}_N^{(s)}$, is uniformly integrable. Then,
\begin{align}
\lim_{N\to \infty}\frac{1}{N^{s^2+2h}}\mathfrak{F}_N(s,h)=\mathfrak{F}(s,0)2^{-2h}\mathbb{E}\left[|\mathsf{X}(s)|^{2h}\right].
\end{align}
\end{prop}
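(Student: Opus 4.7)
The proof proposal is to simply assemble the three preceding results. First I would invoke Proposition \ref{HuaPickrellRepresentation} and rewrite the quantity of interest as
\begin{align*}
\frac{1}{N^{s^2+2h}}\mathfrak{F}_N(s,h)=\left(\frac{1}{N^{s^2}}\mathfrak{F}_N(s,0)\right)\cdot 2^{-2h}\cdot \mathbb{E}_N^{(s)}\left[\bigg|\sum_{i=1}^{N}\frac{\mathsf{x}^{(N)}_i}{N}\bigg|^{2h}\right],
\end{align*}
by pulling the factor $N^{-2h}$ inside the expectation. This factorization is the whole point of the representation in Proposition \ref{HuaPickrellRepresentation}, and it reduces the asymptotic analysis to two independent pieces.

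The first factor converges to $\mathfrak{F}(s,0)$ by Theorem \ref{KeatingSnaith}, which provides the explicit Barnes G-function value. For the second factor I would proceed in two steps. By Theorem \ref{BorodinOlshanskiQiu} we have the distributional convergence
\begin{align*}
\frac{1}{N}\sum_{i=1}^{N}\mathsf{x}_i^{(N)}\overset{\textnormal{d}}{\longrightarrow}\mathsf{X}(s),
\end{align*}
and hence, by the continuous mapping theorem applied to the function $x\mapsto |x|^{2h}$, we obtain
\begin{align*}
\bigg|\sum_{i=1}^{N}\frac{\mathsf{x}^{(N)}_i}{N}\bigg|^{2h}\overset{\textnormal{d}}{\longrightarrow}|\mathsf{X}(s)|^{2h}.
\end{align*}

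At this point the assumed uniform integrability of the sequence (\ref{RandomVariables}) combined with convergence in distribution, via Vitali's convergence theorem, upgrades the distributional convergence to convergence of expectations:
\begin{align*}
\mathbb{E}_N^{(s)}\left[\bigg|\sum_{i=1}^{N}\frac{\mathsf{x}^{(N)}_i}{N}\bigg|^{2h}\right]\longrightarrow \mathbb{E}\left[|\mathsf{X}(s)|^{2h}\right].
\end{align*}
Multiplying the two limits together yields the claim. Since the uniform integrability hypothesis is taken as given, there is no genuine obstacle here; the entire content of the proposition is the reduction it achieves, namely that the hard work of establishing the conjecture of Hughes is now concentrated in verifying the uniform integrability of the averages in (\ref{RandomVariables}). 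That verification, which requires the exchangeability of the diagonal entries and the analysis of the Pearson IV distribution outlined in Section \ref{SubsectionStrategyProof}, is the content of the subsequent sections and is where the real work lies.
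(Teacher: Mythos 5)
Your proposal is correct and follows exactly the paper's own argument: factor via Proposition \ref{HuaPickrellRepresentation}, handle $\mathfrak{F}_N(s,0)/N^{s^2}$ by Theorem \ref{KeatingSnaith}, then combine Theorem \ref{BorodinOlshanskiQiu} with the assumed uniform integrability to upgrade distributional convergence to convergence of expectations (the paper cites Lemma 4.11 in \cite{Kallenberg} for this step rather than Vitali, but it is the same standard fact).
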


\begin{proof}
From Proposition \ref{HuaPickrellRepresentation} and Theorem \ref{KeatingSnaith} we need to show:
\begin{align*}
\mathbb{E}^{(s)}_N\left[\bigg|\sum_{i=1}^{N}\frac{\mathsf{x}^{(N)}_i}{N}\bigg|^{2h}\right]\overset{N \to \infty}{\longrightarrow}\mathbb{E}\left[|\mathsf{X}(s)|^{2h}\right].
\end{align*}
Then, making use of Theorem \ref{BorodinOlshanskiQiu} it moreover suffices, see Lemma 4.11 in \cite{Kallenberg}, to show that the sequence of random variables $\bigg\{ \bigg|\sum_{i=1}^{N}\frac{\mathsf{x}^{(N)}_i}{N}\bigg|^{2h} \bigg\}_{N\ge 1}$, with $(\mathsf{x}_1^{(N)},\dots,\mathsf{x}_N^{(N)})$ having law $\mathsf{M}_N^{(s)}$, is uniformly integrable.
\end{proof}

The next proposition is the key ingredient for proving uniform integrability for the sequence of random variables (\ref{RandomVariables}). 

\begin{prop}\label{DiagonalConsistency}
Let $s>-\frac{1}{2}$. Then, there exists an exchangeable sequence of random variables $\{\mathsf{d}_i\}_{i=1}^{\infty}$ with each $\mathsf{d}_i$ having the following Pearson IV  probability distribution on $\mathbb{R}$:
\begin{align*}
\frac{2^{2s}\Gamma(s+1)^2}{\pi \Gamma(2s+1)}\frac{1}{\left(1+x^2\right)^{1+s}}dx,
\end{align*}
so that, for all $N\ge 1$, we have the following equality in distribution:
\begin{align}
\sum_{i=1}^{N}\mathsf{x}_i^{(N)}\overset{\textnormal{d}}{=}\sum_{i=1}^{N}\mathsf{d}_i
\end{align}
where $(\mathsf{x}_1^{(N)},\dots,\mathsf{x}_N^{(N)})$ has law $\mathsf{M}_N^{(s)}$.
\end{prop}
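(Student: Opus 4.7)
The plan is to lift the problem to the matrix level and exploit the celebrated consistency property of the Hua--Pickrell measures $\mathfrak{M}_N^{(s)}$ on $\mathbb{H}(N)$, which are the source of the eigenvalue measures $\mathsf{M}_N^{(s)}$. Recall that $\mathfrak{M}_N^{(s)}$ has density proportional to $\det(\mathbf{I}+\mathbf{H}^2)^{-N-s}$ with respect to Lebesgue measure on $\mathbb{H}(N)$; by unitary invariance and the Weyl integration formula on Hermitian matrices, its eigenvalue law is precisely $\mathsf{M}_N^{(s)}$ from Definition \ref{DefinitionHuaPickrell}. So if $\mathbf{H}^{(N)}$ has law $\mathfrak{M}_N^{(s)}$, then $\sum_{i=1}^{N}\mathsf{x}_i^{(N)} \stackrel{\mathrm{d}}{=} \operatorname{tr}\mathbf{H}^{(N)} = \sum_{i=1}^N \mathbf{H}_{ii}^{(N)}$, moving the problem from eigenvalues to diagonal entries.

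The crucial input is consistency: $(\mathsf{\Pi}_N^{N+1})_*\mathfrak{M}_{N+1}^{(s)} = \mathfrak{M}_N^{(s)}$ for every $N\ge 1$ and $s>-\frac{1}{2}$. This is the classical fact that justifies the very existence of the infinite Hua--Pickrell measure $\mathfrak{M}^{(s)}$ on $\mathbb{H}$ discussed in Section \ref{SubsectionOrigins}; I would cite (and, if needed, verify via the block matrix decomposition $\mathbf{H}=\begin{bmatrix}\mathbf{A} & \mathbf{v}\\ \mathbf{v}^* & a\end{bmatrix}$ using the identity $\det(\mathbf{I}+\mathbf{H}^2)=\det(\mathbf{I}+\mathbf{A}^2)\cdot\bigl(1+(a - \mathbf{v}^*(\mathbf{I}+\mathbf{A}^2)^{-1}\mathbf{A}\mathbf{v}\cdots)^2 + \cdots\bigr)$-type manipulations, so that the inner $(a,\mathbf{v})$ integral exactly produces the shift in the exponent from $N{+}1{+}s$ to $N{+}s$). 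Given consistency, Kolmogorov's extension theorem produces a probability measure $\mathfrak{M}^{(s)}$ on the projective limit $\mathbb{H}$ whose pushforward to each $\mathbb{H}(N)$ is $\mathfrak{M}_N^{(s)}$. Define $\mathsf{d}_i := \mathbf{H}_{ii}$ for $\mathbf{H}$ of law $\mathfrak{M}^{(s)}$; then automatically $\sum_{i=1}^{N}\mathsf{d}_i \stackrel{\mathrm{d}}{=} \operatorname{tr}\mathbf{H}^{(N)} \stackrel{\mathrm{d}}{=} \sum_{i=1}^{N}\mathsf{x}_i^{(N)}$, as required.

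Exchangeability of $\{\mathsf{d}_i\}_{i=1}^\infty$ is immediate from the $\mathbb{U}(\infty)$-invariance of $\mathfrak{M}^{(s)}$: for any finite permutation $\sigma$, the corresponding permutation matrix lies in $\mathbb{U}(\infty)$, and conjugation by it sends the diagonal vector to its $\sigma$-permutation while preserving the law. To identify the one-dimensional marginal of $\mathsf{d}_1$, I would simply read off the $N=1$ case: $\mathsf{M}_1^{(s)}(dx) = (\mathsf{c}_1^{(s)})^{-1}(1+x^2)^{-1-s}dx$ with $\mathsf{c}_1^{(s)} = 2\pi\cdot 2^{-1-2s}\cdot\Gamma(2s+1)/\Gamma(s+1)^2$ from Definition \ref{DefinitionHuaPickrell} (using $G(2)=1$), which rearranges to exactly the Pearson IV density $\tfrac{2^{2s}\Gamma(s+1)^2}{\pi\Gamma(2s+1)}(1+x^2)^{-1-s}dx$ stated in the proposition.

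The main obstacle is really the consistency property. Although it is classical and attributed to Pickrell and Olshanski--Vershik, a self-contained verification requires a careful Jacobian/integration computation on Hermitian matrices, or alternatively an appeal to a general theorem on invariant measures for inductively compact group actions. Everything else — Kolmogorov extension, exchangeability via unitary invariance, the trace-equals-sum identity, and the explicit marginal — follows quickly once consistency is in hand.
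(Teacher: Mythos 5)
Your proposal is correct and follows essentially the same path as the paper's proof: lift to $\mathfrak{M}_N^{(s)}$ on $\mathbb{H}(N)$, use the trace--eigenvalue identity, invoke the Hua--Pickrell consistency to build $\mathfrak{M}^{(s)}$ on $\mathbb{H}$ via Kolmogorov extension, deduce exchangeability of the diagonal entries from unitary invariance under permutation matrices, and read off the one-dimensional marginal from $\mathsf{M}_1^{(s)}$. The paper likewise cites consistency to Hua, Neretin and Borodin--Olshanski rather than re-deriving it, so nothing is missing.
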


\begin{proof}[Proof of Proposition \ref{DiagonalConsistency}] Let $\mathbb{H}(N)$ be the space of $N\times N$ Hermitian matrices. For $s>-\frac{1}{2}$ we define the following probability measure on $\mathbb{H}(N)$
\begin{align}
\mathfrak{M}_{N}^{(s)}\left(d\mathbf{H}\right)=\frac{1}{Z_{N}^{(s)}}\det\left(\mathbf{I}+\mathbf{H}^2\right)^{-s-N}d\mathbf{H},
\end{align}
where $d\mathbf{H}$ denotes Lebesgue measure on $\mathbb{H}(N)$, more precisely:
\begin{align*}
d\mathbf{H}=\prod_{j=1}^{N}d\mathbf{H}_{jj}\prod_{1\le j< k\le N}^{}d\Re\left(\mathbf{H}_{jk}\right)d\Im\left(\mathbf{H}_{jk}\right)
\end{align*}
and the normalization constant $Z_N^{(s)}$ (see Proposition 3.1 in \cite{BorodinOlshanski}) is given by:
\begin{align*}
Z_N^{(s)}=\prod_{j=1}^{N}\frac{\pi^j\Gamma(2s+j)}{2^{2s+2j-2}\Gamma(s+j)^2} .
\end{align*}
Denote by $\mathsf{eval}_N:\mathbb{H}(N)\to \mathbb{W}_N$ the map taking a matrix $\mathbf{H}\in \mathbb{H}(N)$ to its ordered eigenvalues $\left(\mathsf{eval}_N\left(\mathbf{H}\right)_1\ge \dots \ge \mathsf{eval}_N\left(\mathbf{H}\right)_N\right)$. Then, it is a classical fact, a direct consequence of Weyl's integration formula, see for example \cite{Forrester}, that:
\begin{align*}
\left(\mathsf{eval}_N\right)_*\mathfrak{M}_N^{(s)}=\mathsf{M}_N^{(s)}, \ \forall N\ge 1.
\end{align*}
Here, for a (measurable) map $\mathfrak{T}:\mathcal{X} \to \mathcal{Y}$, between measurable spaces $\mathcal{X}$ and $\mathcal{Y}$, and probability measure $\mathfrak{m}$ on $\mathcal{X}$, the notation $\mathfrak{T}_*\mathfrak{m}$ denotes the pushforward measure of $\mathfrak{m}$ on $\mathcal{Y}$ given by: $\left(\mathfrak{T}_*\mathfrak{m}\right)(A)=\mathfrak{m}\left(\mathfrak{T}^{-1}\left(A\right)\right)$, for $A$ measurable. 

Let $N\ge 1$ be arbitrary. Let $\left(\mathsf{x}_1^{(N)},\dots,\mathsf{x}_N^{(N)}\right)\in \mathbb{W}_N$ be distributed according to $\mathsf{M}_N^{(s)}$ and $\left(\mathsf{d}_1^{(N)},\dots,\mathsf{d}_N^{(N)}\right)$ be the vector of diagonal elements of a random matrix distributed according to $\mathfrak{M}_N^{(s)}$. Then, since the trace of a matrix is equal to the sum of its eigenvalues we get that:
\begin{align*}
\sum_{i=1}^{N}\mathsf{x}_i^{(N)}\overset{\textnormal{d}}{=}\sum_{i=1}^{N}\mathsf{d}_i^{(N)}, \ \forall N\ge 1.
\end{align*}

We will now couple all the random vectors $\big\{\left(\mathsf{d}^{(N)}_1,\dots,\mathsf{d}^{(N)}_N\right)\big\}_{N\ge 1}$ together. For each $N\ge 1$ we denote by $\mathsf{\Pi}_N^{N+1}:\mathbb{H}(N+1)\to \mathbb{H}(N)$ the corners maps:
\begin{align*}
\mathsf{\Pi}_N^{N+1}\left(\{\mathbf{H}_{ij}\}_{i,j=1}^{N+1}\right)=\{\mathbf{H}_{ij}\}_{i,j=1}^{N}.
\end{align*}
It is a remarkable fact, already alluded to in the introduction, which is essentially due to Hua \cite{Hua}, see also \cite{NeretinDuke} and Proposition  3.1 in \cite{BorodinOlshanski} in particular, that the measures $\{\mathfrak{M}_{N}^{(s)} \}_{N\ge 1}$ are consistent with respect to the corners maps:
\begin{align*}
\left(\mathsf{\Pi}_N^{N+1}\right)_*\mathfrak{M}_{N+1}^{(s)}=\mathfrak{M}_{N}^{(s)}, \ \forall N \ge 1.
\end{align*}
Thus, by Kolmogorov's theorem there exists a unique probability measure $\mathfrak{M}^{(s)}$ on the space of infinite Hermitian matrices $\mathbb{H}=\underset{\leftarrow}{\lim}\mathbb{H}(N)$ such that:
\begin{align*}
\left(\mathsf{\Pi}_N^{\infty}\right)_*\mathfrak{M}^{(s)}=\mathfrak{M}_{N}^{(s)}, \ \forall N \ge 1, \ \textnormal{ where } \mathsf{\Pi}_N^{\infty}\left(\{\mathbf{H}_{ij}\}_{i,j=1}^{\infty}\right)=\{\mathbf{H}_{ij}\}_{i,j=1}^{N}.
\end{align*}
In particular, by looking at the diagonal elements of a random matrix distributed according to $\mathfrak{M}^{(s)}$, there exists a sequence of random variables $\{\mathsf{d}_i \}_{i=1}^{\infty}$ such that
\begin{align*}
\left(\mathsf{d}_1,\dots,\mathsf{d}_N\right)\overset{\textnormal{d}}{=}\left(\mathsf{d}^{(N)}_1,\dots,\mathsf{d}^{(N)}_N\right), \ \forall N\ge 1.
\end{align*}

We next show that the sequence of random variables $\{\mathsf{d}_i \}_{i=1}^{\infty}$ is actually exchangeable. First, as mentioned in the introduction, observe that $\mathbb{U}(N)$ has a natural action on $\mathbb{H}(N)$ by conjugation: for each $\mathbf{U}\in \mathbb{U}(N)$ we have $\mathsf{T}_{\mathbf{U}}:\mathbb{H}(N)\to \mathbb{H}(N)$ given by $\mathsf{T}_{\mathbf{U}}(\mathbf{H})=\mathbf{U}^*\mathbf{H}\mathbf{U}$. Moreover, observe that by invariance of the Lebesgue measure on $\mathbb{H}(N)$:
\begin{align*}
\left(\mathsf{T}_{\mathbf{U}}\right)_*\mathfrak{M}_{N}^{(s)}=\mathfrak{M}_{N}^{(s)}, \ \forall \mathbf{U} \in \mathbb{U}(N), \forall N \ge 1.
\end{align*}
Now let $N\ge 1$ be arbitrary and let $\sigma$ be any permutation in the symmetric group $\mathfrak{S}(N)$. Let $\mathbf{P}_{\sigma}$ be the corresponding permutation matrix. Note that $\mathbf{P}_{\sigma}\in \mathbb{U}(N)$ and moreover observe that:
\begin{align*}
\left(\mathsf{T}_{\mathbf{P}_{\sigma}}(\mathbf{H})_{11},\dots,\mathsf{T}_{\mathbf{P}_{\sigma}}(\mathbf{H})_{NN}\right)=\left(\mathbf{H}_{\sigma(1)\sigma(1)},\dots,\mathbf{H}_{\sigma(N)\sigma(N)}\right).
\end{align*}
Then since
\begin{align*}
\left(\mathsf{T}_{\mathbf{P}_{\sigma}}\right)_*\mathfrak{M}_{N}^{(s)}=\mathfrak{M}_{N}^{(s)}, \ \forall \sigma \in \mathfrak{S}(N),
\end{align*}
we get that:
\begin{align*}
\left(\mathsf{d}_{\sigma(1)},\dots,\mathsf{d}_{\sigma(N)}\right)\overset{\textnormal{d}}{=}\left(\mathsf{d}_1,\dots,\mathsf{d}_N\right), \ \forall \sigma \in \mathfrak{S}(N).
\end{align*}
Finally, by the exchangeability just proven, we have that:
\begin{align*}
\mathsf{d}_i\overset{\textnormal{d}}{=}\mathsf{d}_1, \ \forall i\ge 1
\end{align*}
and by construction:
\begin{align*}
\mathsf{Law}\left(\mathsf{d}_1\right)=\mathfrak{M}^{(s)}_1(dx)=\mathsf{M}_1^{(s)}(dx)=\frac{2^{2s}\Gamma(s+1)^2}{\pi \Gamma(2s+1)}\frac{1}{\left(1+x^2\right)^{1+s}}dx.
\end{align*}
\end{proof}

We first prove the convergence statement in Theorem \ref{MainResult} in the range $s>0$.

\begin{proof}[Proof of convergence in Theorem \ref{MainResult} for $s>0$]
Let $s>0$ and $0\le h<s+\frac{1}{2}$. We claim that for any real $r$ so that $h<r<s+\frac{1}{2}$ we have:
\begin{align*}
\sup_{N\ge 1}\mathbb{E}^{(s)}_N\left[\bigg|\sum_{i=1}^{N}\frac{\mathsf{x}^{(N)}_i}{N}\bigg|^{2r}\right]<\infty.
\end{align*}
This implies uniform integrability of the sequence of random variables (\ref{RandomVariables}) and from Proposition \ref{ConvergenceProposition} gives us the statement of Theorem \ref{MainResult}. Moreover, from Proposition \ref{DiagonalConsistency} it suffices to show that:
\begin{align*}
\sup_{N\ge 1}\mathbb{E}^{(s)}\left[\bigg|\sum_{i=1}^{N}\frac{\mathsf{d}_i}{N}\bigg|^{2r}\right]<\infty.
\end{align*}
Let $N\ge 1$ be arbitrary. We now use the elementary version of Jensen's inequality, applied to the sum inside the expectation, with the function $|\cdot|^{2r}$, such an application is valid as long as $r\ge \frac{1}{2}$ which we can choose so since $s>0$. We thus obtain:

\begin{align}
\mathbb{E}^{(s)}\left[\bigg|\sum_{i=1}^{N}\frac{\mathsf{d}_i}{N}\bigg|^{2r}\right]\le \mathbb{E}^{(s)}\left[\frac{\sum_{i=1}^{N}|\mathsf{d}_i|^{2r}}{N}\right]=\mathbb{E}^{(s)}\left[|\mathsf{d}_1|^{2r}\right], \ \forall N \ge 1,
\end{align}
from which the result follows.
\end{proof}

Before continuing some comments are in order. In the course of proving Theorem \ref{MainResult} for $s>0$ we only required the fact that the $\{\mathsf{d}_i\}_{i=1}^\infty$ are identically distributed and finiteness of certain moments. Also observe that the constraint $s>0$ is exactly the condition required to have $\mathbb{E}^{(s)}\left[|\mathsf{d}_1|\right]<\infty$.  Then, it is easy to show that $\big\{ N^{-1}\sum_{i=1}^{N}\mathsf{d}_i\big\}_{N=1}^\infty$ forms a backwards martingale (with respect to the exchangeable filtration) and thus by the backwards martingale convergence theorem we obtain that this sequence converges almost surely and in $L^1$ (this is the so-called law of large numbers for exchangeable random variables). More generally, standard martingale arguments also give the convergence of the moments $\mathbb{E}^{(s)}\left[N^{-2h}\big|\sum_{i=1}^{N}\mathsf{d}_i\big|^{2h}\right]$ up to the optimal threshold $h<s+\frac{1}{2}$, which provides an alternative route to the proof presented above.

On the other hand, for $-\frac{1}{2}<s\le 0$ we are outside the domain of the law of large numbers and more delicate arguments are required to establish Theorem \ref{MainResult}. The main idea behind the proof is the fact that (as we see in the sequel) the $\{\mathsf{d}_i \}_{i=1}^\infty$ are conditionally i.i.d. random variables with a mean and variance that can be connected back to the random variable $\mathsf{X}(s)$ and point process $\mathsf{C}^{(s)}$, which can then be analysed by making use of the determinantal structure. We begin with the following elementary probabilistic lemma which is of independent interest. Its significance for the problem at hand will be clear shortly.

\begin{lem}\label{LowMomentsConvLem}
Let $0<p\le 2$. Assume that we are given a sequence of random variables $\{\mathsf{Y}_i \}_{i=1}^\infty$ whose conditional distribution on some sigma algebra $\mathcal{B}$ (possibly generated by some random variable $\mathsf{B}$) is that of i.i.d. random variables with random mean $\mathfrak{m}$ and variance $\mathfrak{v}$. Moreover, assume that the following holds:
\begin{align*}
\mathbb{E}\left(\mathfrak{v}^{\frac{p}{2}}\right)<\infty.
\end{align*}
Then, we have:
\begin{align*}
\frac{1}{N}\sum_{i=1}^{N}\mathsf{Y}_i\overset{L^p}{\longrightarrow}\mathfrak{m}, \ \textnormal{ as } N\to \infty
\end{align*}
and so in particular:
\begin{align*}
\mathbb{E}\left[\bigg|\frac{\mathsf{Y}_1+\dots+\mathsf{Y}_N}{N}\bigg|^p\right]\longrightarrow\mathbb{E}\left[|\mathfrak{m}|^p\right]<\infty.
\end{align*}
\end{lem}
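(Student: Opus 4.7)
The plan is to condition on the sigma-algebra $\mathcal{B}$ and reduce the $L^p$ statement to a single conditional variance computation. Writing $\overline{\mathsf{Y}}_N := N^{-1}\sum_{i=1}^N \mathsf{Y}_i$, the conditional i.i.d.\ structure of the $\mathsf{Y}_i$ gives, almost surely, $\mathbb{E}\left[\overline{\mathsf{Y}}_N \mid \mathcal{B}\right] = \mathfrak{m}$ and the conditional variance identity
\begin{align*}
\mathbb{E}\left[(\overline{\mathsf{Y}}_N - \mathfrak{m})^2 \mid \mathcal{B}\right] = \frac{\mathfrak{v}}{N}.
\end{align*}

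The key observation, and the only non-routine step, is that this second-moment control upgrades to $L^p$-control for every $p\in(0,2]$ by applying the conditional Jensen inequality to the function $t \mapsto t^{p/2}$ on $[0,\infty)$, which is concave precisely because $p/2\le 1$ --- this is where the hypothesis $p\le 2$ is essentially used:
\begin{align*}
\mathbb{E}\left[|\overline{\mathsf{Y}}_N - \mathfrak{m}|^p \mid \mathcal{B}\right] = \mathbb{E}\left[\bigl((\overline{\mathsf{Y}}_N - \mathfrak{m})^2\bigr)^{p/2} \mid \mathcal{B}\right] \le \bigl(\mathbb{E}\left[(\overline{\mathsf{Y}}_N - \mathfrak{m})^2 \mid \mathcal{B}\right]\bigr)^{p/2} = \left(\frac{\mathfrak{v}}{N}\right)^{p/2}.
\end{align*}
Taking unconditional expectations yields $\mathbb{E}\left[|\overline{\mathsf{Y}}_N - \mathfrak{m}|^p\right] \le N^{-p/2}\, \mathbb{E}\left[\mathfrak{v}^{p/2}\right] \to 0$, which is the asserted $L^p$ convergence to $\mathfrak{m}$.

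For the ``in particular'' convergence of moments I would split into two cases. For $p\ge 1$, the reverse triangle inequality for the $L^p$-norm gives $\bigl|\,\|\overline{\mathsf{Y}}_N\|_p - \|\mathfrak{m}\|_p\,\bigr| \le \|\overline{\mathsf{Y}}_N - \mathfrak{m}\|_p \to 0$; for $0<p<1$, the elementary subadditivity $\bigl||a|^p - |b|^p\bigr|\le |a-b|^p$ gives directly $\bigl|\mathbb{E}\left[|\overline{\mathsf{Y}}_N|^p\right] - \mathbb{E}\left[|\mathfrak{m}|^p\right]\bigr| \le \mathbb{E}\left[|\overline{\mathsf{Y}}_N - \mathfrak{m}|^p\right] \to 0$. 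The finiteness $\mathbb{E}\left[|\mathfrak{m}|^p\right]<\infty$ is then read off from $\mathbb{E}[|\mathsf{Y}_1|^p]<\infty$ via conditional Jensen, $|\mathfrak{m}|^p = |\mathbb{E}[\mathsf{Y}_1\mid\mathcal{B}]|^p \le \mathbb{E}[|\mathsf{Y}_1|^p\mid\mathcal{B}]$ (for $p\ge 1$), and an analogous splitting $\mathbb{E}[|\mathfrak{m}|^p] \le \mathbb{E}[|\mathsf{Y}_1|^p] + \mathbb{E}[\mathfrak{v}^{p/2}]$ for $p<1$; in the intended application $\mathsf{Y}_i=\mathsf{d}_i$ is Pearson IV with $p=2h<2s+1$, so $\mathbb{E}[|\mathsf{Y}_1|^p]<\infty$ holds for free. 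The main obstacle is really nothing more than the recognition that Jensen applied to the concave power $t^{p/2}$ is the right maneuver; once that is in hand the rest of the argument is elementary.
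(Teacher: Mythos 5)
Your proof is correct and follows essentially the same route as the paper's: the key estimate $\mathbb{E}\bigl[|\overline{\mathsf{Y}}_N - \mathfrak{m}|^p\bigr] \le N^{-p/2}\,\mathbb{E}\bigl[\mathfrak{v}^{p/2}\bigr]$ comes from the conditional variance identity together with Jensen's inequality for the concave map $t\mapsto t^{p/2}$, which is exactly the paper's argument (stated there unconditionally for centered i.i.d.\ variables and then conditioned on $\mathcal{B}$). You go slightly further in spelling out the ``in particular'' step and in noting that the finiteness $\mathbb{E}[|\mathfrak{m}|^p]<\infty$ is not literally a consequence of $\mathbb{E}[\mathfrak{v}^{p/2}]<\infty$ alone but additionally uses $\mathbb{E}[|\mathsf{Y}_1|^p]<\infty$, which holds in the intended Pearson IV application --- a detail the paper leaves implicit.
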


\begin{proof} First assume that $\{\mathsf{Z}_i \}_{i=1}^\infty$ is an i.i.d. sequence of random variables with mean $0$ and variance $\sigma^2$. Then, by applying Jensen's inequality, since $0<p \le 2$, we have:
\begin{align*}
\mathbb{E}\left[\bigg|\sum_{i=1}^{N}\mathsf{Z}_i\bigg|^p\right]\le \mathbb{E}\left[\bigg|\sum_{i=1}^{N}\mathsf{Z}_i\bigg|^2\right]^{\frac{p}{2}} =N^{\frac{p}{2}}\left(\sigma^2\right)^{\frac{p}{2}}.
\end{align*}
Thus, by conditioning on $\mathcal{B}$, we obtain that:
\begin{align*}
\mathbb{E}\left[\bigg|\sum_{i=1}^{N}\left(\mathsf{Y}_i-\mathfrak{m}\right)\bigg|^p\right]\le N^{\frac{p}{2}}\mathbb{E}\left[\mathfrak{v}^{\frac{p}{2}}\right].
\end{align*}
Dividing through by $N^p$ we see that $N^{-1}\sum_{i=1}^{N}\mathsf{Y}_i$ converges to $\mathfrak{m}$ in $L^p$ provided that $\mathbb{E}\left[\mathfrak{v}^{\frac{p}{2}}\right]<\infty$.
\end{proof}

We now prove the convergence statement in Theorem \ref{MainResult} in the range $-\frac{1}{2}<s\le 0$ via an application of Lemma \ref{LowMomentsConvLem}.

\begin{proof}[Proof of convergence in Theorem \ref{MainResult} for $-\frac{1}{2}<s\le 0$]
First, recall from Section \ref{SubsectionOrigins}, see \cite{BorodinOlshanski}, \cite{Qiu}, \cite{OrbitalBetaBoundary} for proofs and more details, that we have the following disintegration of the Hua-Pickrell probability measure $\mathfrak{M}^{(s)}$ on $\mathbb{H}$:
\begin{align*}
\mathfrak{M}^{(s)}=\int_{\Omega}^{}\nu^{(s)}\left(d\omega\right)\mathfrak{N}_{\omega},
\end{align*}
where the probability measure $\nu^{(s)}$ is uniquely determined through this decomposition and $\mathfrak{N}_{\omega}$ is the ergodic $\mathbb{U}(\infty)$-invariant measure on $\mathbb{H}$ corresponding to $\omega \in \Omega$. Moreover, under the ergodic measure $\mathfrak{N}_{\omega}$, where $\omega=\left(\{\alpha_i^+ \}_{i=1}^\infty, \{\alpha_i^- \}_{i=1}^\infty, \gamma_1,\gamma_2\right)$, the sequence of diagonal elements $\{\mathsf{d}_i \}_{i=1}^\infty$ is i.i.d. with an explicit distribution depending on $\omega$ (that we shall not need here) having mean $\gamma_1$ and variance $\sum_{i=1}^{\infty}\left(\alpha_i^+\right)^2+\left(\alpha_i^-\right)^2+\gamma_2$, see \cite{OlshanskiVershik}, \cite{BorodinOlshanski}, \cite{OrbitalBetaBoundary} for details. 

In particular, we can sample the diagonal elements $\{\mathsf{d}_i \}_{i=1}^\infty$ under $\mathfrak{M}^{(s)}$ as follows: we first sample a random variable $\omega\in\Omega$ according to the probability measure $\nu^{(s)}$ and then conditioned on $\omega$ we sample an i.i.d. sequence $\{ \mathsf{d}_i \}_{i=1}^\infty$ with a certain explicit distribution, see \cite{OlshanskiVershik}, \cite{BorodinOlshanski}, \cite{OrbitalBetaBoundary}, with  mean $\gamma_1$ and variance $\sum_{i=1}^{\infty}\left(\alpha_i^+\right)^2+\left(\alpha_i^-\right)^2+\gamma_2$. Thus, by conditioning on $\omega$ and recalling that under the Hua-Pickrell measures $\gamma_1\overset{\textnormal{d}}{=}\mathsf{X}(s)$ (see Theorem 2.3 in \cite{Qiu}), we can use Lemma \ref{LowMomentsConvLem} to obtain that, for $-\frac{1}{2}<s\le 0$ and $0< h <s+\frac{1}{2}$:
\begin{align*}
\mathbb{E}^{(s)}_N\left[\bigg|\sum_{i=1}^{N}\frac{\mathsf{x}^{(N)}_i}{N}\bigg|^{2h}\right]=\mathbb{E}^{(s)}\left[\bigg|\sum_{i=1}^{N}\frac{\mathsf{d}_i}{N}\bigg|^{2h}\right]\overset{N \to \infty}{\longrightarrow}\mathbb{E}\left[|\mathsf{X}(s)|^{2h}\right],
\end{align*}
provided that we have:
\begin{align}\label{VarianceFiniteness}
\mathbb{E}^{(s)}\left[\left(\sum_{i=1}^{\infty}\left(\alpha_i^+\right)^2+\left(\alpha_i^-\right)^2+\gamma_2\right)^{h}\right]<\infty.
\end{align}
This then concludes the proof of the theorem in the range $-\frac{1}{2}<s\le 0$ subject to proving (\ref{VarianceFiniteness}). Recalling that from Theorem 2.1 in \cite{Qiu}, under the Hua-Pickrell measures $\gamma_2\equiv 0$ a.s. and moreover from \cite{BorodinOlshanski}, $\{\alpha_i^+ \} \sqcup \{-\alpha_i^-\} \overset{\textnormal{d}}{=} \mathsf{C}^{(s)}$, we are then required to prove that:
\begin{align*}
\mathbb{E}^{(s)}\left[\left(\sum_{i=1}^{\infty}\left(\alpha_i^+\right)^2+\left(\alpha_i^-\right)^2+\gamma_2\right)^h\right]=\mathbb{E}^{(s)}\left[\left(\sum_{i=1}^{\infty}\left(\alpha_i^+\right)^2+\left(\alpha_i^-\right)^2\right)^h\right]=\mathbb{E}\left[\left(\sum_{x\in \mathsf{C}^{(s)}}^{}x^2\right)^h\right]<\infty.
\end{align*}
We will now make use of the following elementary inequality: for $0<\eta<1$ and non-negative real numbers $\{t_i\}_{i=1}^\infty$, with the convention that $0^{\eta}=0$, we have:
\begin{align}\label{inequality}
\left(\sum_{i=1}^{\infty}t_i\right)^\eta\le \sum_{i=1}^{\infty}t_i^{\eta}.
\end{align}
Thus, we obtain:
\begin{align*}
\mathbb{E}\left[\left(\sum_{x\in \mathsf{C}^{(s)}}^{}x^2\right)^h\right]&=\mathbb{E}\left[\left(\sum_{x\in \mathsf{C}^{(s)}}^{}x^2\mathbf{1}(|x|\ge 1)+\sum_{x\in \mathsf{C}^{(s)}}^{}x^2\mathbf{1}(|x|<1)\right)^h\right]\\
&\le \mathbb{E}\left[\left(\sum_{x\in \mathsf{C}^{(s)}}^{}x^2\mathbf{1}(|x|\ge 1)\right)^h\right] +\mathbb{E}\left[\left(\sum_{x\in \mathsf{C}^{(s)}}^{}x^2\mathbf{1}(|x|< 1)\right)^h\right] \\
&\le \mathbb{E}\left[\left(\sum_{x\in \mathsf{C}^{(s)}}^{}x^2\mathbf{1}(|x|\ge 1)\right)^h\right] +\mathbb{E}\left[\sum_{x\in \mathsf{C}^{(s)}}^{}x^2\mathbf{1}(|x|< 1)\right]^{h}\\
&\le \mathbb{E}\left[\sum_{x\in \mathsf{C}^{(s)}}^{}x^{2h}\mathbf{1}(|x|\ge 1)\right] +\mathbb{E}\left[\sum_{x\in \mathsf{C}^{(s)}}^{}x^2\mathbf{1}(|x|< 1)\right]^{h},
\end{align*}
where in the first and third inequality we have used $(\ref{inequality})$ while in the second one we have used Jensen's inequality. Hence, it will suffice to show that:
\begin{align*}
\mathbb{E}\left[\sum_{x\in \mathsf{C}^{(s)}}^{}x^{2h}\mathbf{1}(|x|\ge 1)\right]<\infty, \ \mathbb{E}\left[\sum_{x\in \mathsf{C}^{(s)}}^{}x^2\mathbf{1}(|x|< 1)\right]<\infty.
\end{align*}
By definition\footnote{To be precise we apply the determinantal point process definition with the bounded Borel functions of compact support in $\mathbb{R}^*$, $G_R(x)=\mathbf{1}\left(1\le|x|<R\right)x^{2h}$ and $F_R(x)=\mathbf{1}\left(R^{-1}<|x|<1\right)x^2$ to obtain:
\begin{align*}
  \mathbb{E}\left[\sum_{x\in \mathsf{C}^{(s)}}^{}G_R(x)\right]=\int_{-\infty}^{\infty}G_R(x)\mathsf{K}^{(s)}\left(x,x\right)dx, \ \  \mathbb{E}\left[\sum_{x\in \mathsf{C}^{(s)}}^{}F_R(x)\right]=\int_{-\infty}^{\infty}F_R(x)\mathsf{K}^{(s)}\left(x,x\right)dx
\end{align*}
and then apply the monotone convergence theorem to take $R \to \infty$.} of the determinantal property of the point process $\mathsf{C}^{(s)}$ this is equivalent to:
\begin{align*}
\mathbb{E}\left[\sum_{x\in \mathsf{C}^{(s)}}^{}x^{2h}\mathbf{1}(|x|\ge 1)\right]&=\int_{|x|\ge 1}^{}x^{2h}\mathsf{K}^{(s)}\left(x,x\right)dx<\infty, \\ \mathbb{E}\left[\sum_{x\in \mathsf{C}^{(s)}}^{}x^2\mathbf{1}(|x|< 1)\right]&=\int_{|x|<1}^{}x^{2}\mathsf{K}^{(s)}\left(x,x\right)dx<\infty,
\end{align*}
which is the content of Lemma \ref{KernelEstimate} below.
\end{proof}

\begin{lem}\label{KernelEstimate}
Let $s>-\frac{1}{2}$, $h<s+\frac{1}{2}$ and $0<R<\infty$. Then, we have:
\begin{align*}
\int_{|x|\ge R}^{}x^{2h}\mathsf{K}^{(s)}\left(x,x\right)dx&<\infty,\\
\int_{|x|<R}^{}x^2\mathsf{K}^{(s)}\left(x,x\right)dx&<\infty.
\end{align*}
\end{lem}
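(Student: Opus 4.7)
My plan is to obtain sharp pointwise asymptotics for $\mathsf{K}^{(s)}(x,x)$ as $|x|\to 0$ and as $|x|\to\infty$, and to combine these with continuity of the kernel on $\mathbb{R}^*$ to handle the intermediate range. A L'Hopital-rule calculation applied to the integrable form of the kernel yields
\[
\mathsf{K}^{(s)}(x,x) \;=\; c_s\bigl[(P^{(s)})'(x)\,Q^{(s)}(x) - P^{(s)}(x)\,(Q^{(s)})'(x)\bigr], \qquad c_s \;:=\; \frac{1}{2\pi}\frac{\Gamma(s+1)^2}{\Gamma(2s+1)\Gamma(2s+2)} \;>\; 0.
\]
Introducing the variable $u = 1/|x|$ and the auxiliary functions $A(u) := u^{1/2}J_{s-1/2}(u)$, $B(u) := u^{1/2}J_{s+1/2}(u)$, and using $du/dx = -\mathrm{sgn}(x)/x^2$, the chain rule rewrites the above as
\[
\mathsf{K}^{(s)}(x,x) \;=\; \frac{C_s}{x^2}\,\mathsf{W}(u)\,\bigg|_{u=1/|x|}, \qquad \mathsf{W}(u) \;:=\; A(u)B'(u) - A'(u)B(u),
\]
for an explicit positive constant $C_s$. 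The Bessel recurrences $zJ'_\nu(z) = \nu J_\nu(z) - z J_{\nu+1}(z)$ and $zJ'_{\nu+1}(z) = zJ_\nu(z) - (\nu+1)J_{\nu+1}(z)$ let one simplify further to $\mathsf{W}(u) = A(u)^2 + B(u)^2 - (2s/u)A(u)B(u)$, which is the convenient form for the asymptotic analysis.

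For the integral over $\{|x|\ge R\}$ I would analyse $\mathsf{W}(u)$ as $u \to 0^+$. The power series $J_\nu(u) = (u/2)^\nu/\Gamma(\nu+1) + O(u^{\nu+2})$ gives $A(u)\sim c_A u^s$ and $B(u)\sim c_B u^{s+1}$ with explicit positive constants $c_A, c_B$, and a direct cancellation yields $\mathsf{W}(u) = c_A c_B\,u^{2s}(1+o(1))$ as $u \to 0^+$. Substituting back via $u = 1/|x|$, this gives $\mathsf{K}^{(s)}(x,x) = O(|x|^{-2s-2})$ as $|x|\to\infty$, so
\[
\int_{|x|\ge R} x^{2h}\,\mathsf{K}^{(s)}(x,x)\,dx \;\le\; C \int_{|x|\ge R} |x|^{2h-2s-2}\,dx \;<\; \infty
\]
precisely under the hypothesis $2h - 2s - 2 < -1$, i.e.\ $h < s + \tfrac{1}{2}$, which is what is assumed. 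For the integral over $\{|x|<R\}$ I would use the large-argument expansion $J_\nu(u) = \sqrt{2/(\pi u)}\cos(u - \nu\pi/2 - \pi/4) + O(u^{-3/2})$. Since the phases of $J_{s-1/2}$ and $J_{s+1/2}$ differ by exactly $\pi/2$, one obtains $A(u)^2 + B(u)^2 \to 2/\pi$ and $A(u)B(u) = O(1)$ as $u\to\infty$, and hence $\mathsf{W}(u) = O(1)$. Consequently $\mathsf{K}^{(s)}(x,x) = O(1/x^2)$ as $|x|\to 0$, so $x^2\mathsf{K}^{(s)}(x,x)$ is bounded on a neighbourhood of the origin and the integral in question is trivially finite.

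The remaining intermediate range $\{R_0 \le |x| \le R_1\}$ for any $0 < R_0 < R_1 < \infty$ is handled by continuity (in fact smoothness) of $\mathsf{K}^{(s)}(x,x)$ on $\mathbb{R}^*$, which gives uniform boundedness on any compact subset away from the origin. The whole argument is routine bookkeeping with Bessel asymptotics; the only mildly delicate point is the $O(1)$ bound on $\mathsf{W}(u)$ in the large-$u$ regime, but no cancellation of leading oscillatory terms is actually required: $A, B$ have bounded oscillation at infinity, $A'$ and $B'$ are likewise bounded (as a direct consequence of the recurrences above, which express them as linear combinations of $A, B$ with coefficients $O(1)$ in $u$), and the $(2s/u)AB$ term is $O(1/u)$ and therefore negligible.
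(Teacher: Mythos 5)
Your proof is correct and follows essentially the same route as the paper's: L'H\^opital on the diagonal, the change of variables $u=1/|x|$ (the paper's $z=1/x$), simplification via the Bessel recurrences to the form $u J_{s-1/2}^2 + u J_{s+1/2}^2 - 2s J_{s-1/2}J_{s+1/2}$, and the classical small/large argument Bessel asymptotics. One small slip: the constant in your $u\to 0^+$ expansion of $\mathsf{W}(u)$ is not $c_A c_B$ but $c_A^2 - 2s\,c_A c_B = c_A^2/(2s+1)$ (the $A^2$ and $(2s/u)AB$ terms both contribute at order $u^{2s}$ and partially cancel), though since this quantity is positive for $s>-\tfrac12$ and only the order of magnitude is needed, the conclusion is unaffected.
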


\begin{proof}
By symmetry about zero it suffices to show that:
\begin{align*}
\int_{R}^{\infty}x^{2h}\mathsf{K}^{(s)}\left(x,x\right)dx<\infty, \ \int_{0}^{R}x^{2}\mathsf{K}^{(s)}\left(x,x\right)dx<\infty.
\end{align*}
Now, from Definition \ref{DefinitionGamma1} for $x>0$ we have the following explicit expression for $\mathsf{K}^{(s)}(x,x)$, where we have used L'Hopital's rule to resolve the singularity in the denominator:
\begin{align*}
\mathsf{K}^{(s)}(x,x)=const^{(s)}\left(x^{-\frac{1}{2}}J_{s+\frac{1}{2}}\left(\frac{1}{x}\right)\frac{d}{dx}\left[x^{-\frac{1}{2}}J_{s-\frac{1}{2}}\left(\frac{1}{x}\right)\right]-x^{-\frac{1}{2}}J_{s-\frac{1}{2}}\left(\frac{1}{x}\right)\frac{d}{dx}\left[x^{-\frac{1}{2}}J_{s+\frac{1}{2}}\left(\frac{1}{x}\right)\right]\right)
\end{align*}
and where the explicit constant $const^{(s)}$ is given by:
\begin{align*}
const^{(s)}=\frac{2^{4s-1}\Gamma(s+1)^2\Gamma\left(s+\frac{1}{2}\right)\Gamma\left(s+\frac{3}{2}\right)}{\pi\Gamma(2s+1)\Gamma(2s+2)}.
\end{align*}
Using the following formulae for the derivative of the Bessel function:
\begin{align*}
\frac{d}{dx}J_{\beta}\left(\frac{1}{x}\right)&=-\frac{1}{x^2}\left[J_{\beta-1}\left(\frac{1}{x}\right)-\beta x J_{\beta}\left(\frac{1}{x}\right)\right]\\
&=-\frac{1}{x^2}\left[\beta xJ_\beta\left(\frac{1}{x}\right)-J_{\beta+1}\left(\frac{1}{x}\right)\right]
\end{align*}
and some manipulations we obtain that:
\begin{align*}
\mathsf{K}^{(s)}(x,x)=const^{(s)}\left[x^{-3}J^2_{s+\frac{1}{2}}\left(\frac{1}{x}\right)+x^{-3}J^2_{s-\frac{1}{2}}\left(\frac{1}{x}\right)-2sx^{-2}J_{s+\frac{1}{2}}\left(\frac{1}{x}\right)J_{s-\frac{1}{2}}\left(\frac{1}{x}\right)\right].
\end{align*}
By making the change of variables $x\mapsto 1/z$ we get:
\begin{align*}
\int_{R}^{\infty}x^{2h}\mathsf{K}^{(s)}\left(x,x\right)dx&=const^{(s)}\int_{0}^{\frac{1}{R}}z^{1-2h}J^2_{s+\frac{1}{2}}\left(z\right)+z^{1-2h}J^2_{s-\frac{1}{2}}\left(z\right)-2sz^{-2h}J_{s+\frac{1}{2}}\left(z\right)J_{s-\frac{1}{2}}\left(z\right)dz,\\
\int_{0}^{R}x^2\mathsf{K}^{(s)}\left(x,x\right)dx&=const^{(s)}\int_{\frac{1}{R}}^{\infty}z^{-1}J^2_{s+\frac{1}{2}}\left(z\right)+z^{-1}J^2_{s-\frac{1}{2}}\left(z\right)-2sz^{-2}J_{s+\frac{1}{2}}\left(z\right)J_{s-\frac{1}{2}}\left(z\right)dz.
\end{align*}
Now, recall the classical asymptotics of the Bessel function for small and large argument:
\begin{align*}
J_\beta(x)&\sim \frac{x^\beta}{2^\beta\Gamma(\beta+1)}, \ \textnormal{ as } x\to 0,\\
J_\beta(x)&\sim \sqrt{\frac{2}{\pi x}}\cos\left(x-\frac{2\beta+1}{4}\pi\right)+\mathcal{O}\left(\frac{1}{x}\right), \ \textnormal{ as } x\to \infty.
\end{align*}
From these asymptotics it is immediate that we have:
\begin{align*}
z^{1-2h}J^2_{s+\frac{1}{2}}\left(z\right)+z^{1-2h}J^2_{s-\frac{1}{2}}\left(z\right)-2sz^{-2h}J_{s+\frac{1}{2}}\left(z\right)J_{s-\frac{1}{2}}\left(z\right)&=\mathcal{O}\left(z^{2s-2h}\right), \ \textnormal{ as } z\to 0, \\
z^{-1}J^2_{s+\frac{1}{2}}\left(z\right)+z^{-1}J^2_{s-\frac{1}{2}}\left(z\right)-2sz^{-2}J_{s+\frac{1}{2}}\left(z\right)J_{s-\frac{1}{2}}\left(z\right)&=\mathcal{O}\left(z^{-2}\right), \ \textnormal{ as } z\to \infty,
\end{align*}
which since $h<s+\frac{1}{2}$, gives the required integrability for both integrals and completes the proof of the lemma. 
\end{proof}

We now finally complete the proof of our main result. 

\begin{proof}[Completion of proof of Theorem \ref{MainResult}]
It remains to show that $0<\mathfrak{F}(s,h)<\infty$. The fact that $\mathfrak{F}(s,h)<\infty$ is an immediate consequence of the proofs of convergence above, while $\mathfrak{F}\left(s,h\right)>0$ follows from Lemma \ref{NotZeroLem} below.
\end{proof}

The statement of Lemma \ref{NotZeroLem} is rather intuitive, however as we shall see below its proof is quite non-trivial. 

\begin{lem}\label{NotZeroLem}
Let $s>-\frac{1}{2}$. Then, $\mathsf{X}(s)$ is not almost surely zero.
\end{lem}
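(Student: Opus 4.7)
My plan is to split by the value of $s$, mirroring the two sub-cases in the proof of the main theorem itself.

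\emph{Case $s>1/2$.} Here the situation is immediate: Proposition~\ref{EvenMomentsFormulaProposition} applied with $h=1$ yields the explicit second moment (\ref{SecondMoment}), i.e. $\mathbb{E}[\mathsf{X}(s)^{2}]=1/(4s^{2}-1)$, which is strictly positive for $s>1/2$. If $\mathsf{X}(s)$ were almost surely zero, this second moment would vanish, a contradiction.

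\emph{Case $-1/2<s\le 1/2$.} This is the substantive range: $\mathbb{E}[\mathsf{X}(s)^{2}]$ is generally infinite and the second-moment argument is unavailable, so we must work with bounded observables. My plan is to exploit the conditional i.i.d.\ structure of the diagonal entries $\{\mathsf{d}_{i}\}$ under the ergodic decomposition $\mathfrak{M}^{(s)}=\int_{\Omega}\mathfrak{N}_{\omega}\,d\nu^{(s)}(\omega)$ recalled in Section~\ref{SubsectionOrigins}, where conditionally on $\omega$ the $\mathsf{d}_{i}$ are i.i.d.\ with mean $\gamma_{1}(\omega)=\mathsf{X}(s)$. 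For any bounded odd measurable $\varphi:\mathbb{R}\to\mathbb{R}$, conditional independence gives
\[
\mathbb{E}^{(s)}\!\left[\varphi(\mathsf{d}_{1})\varphi(\mathsf{d}_{2})\right]
=\mathbb{E}\!\left[\bigl(\mathbb{E}[\varphi(\mathsf{d}_{1})\mid\omega]\bigr)^{2}\right]\ge 0,
\]
and equality holds if and only if $\mathbb{E}[\varphi(\mathsf{d}_{1})\mid\omega]=0$ for $\nu^{(s)}$-a.e.\ $\omega$. Assuming for contradiction that $\mathsf{X}(s)\equiv 0$, the explicit Olshanski-Vershik description of the conditional law of $\mathsf{d}_{1}$ given $\omega$ (a convolution of scaled centred exponentials indexed by $\alpha_{i}^{\pm}$, together with a Gaussian piece of variance $\gamma_{2}=0$ and a shift $\gamma_{1}=0$) would force $\mathbb{E}[\varphi(\mathsf{d}_{1})\mid\omega]$ to be a specific functional of $(\alpha_{i}^{\pm})$ that is generically nonzero on $\mathsf{C}^{(s)}$-typical (asymmetric) configurations. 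To derive the contradiction concretely, it suffices to compute $\mathbb{E}_{2}^{(s)}[\varphi(\mathbf{H}_{11})\varphi(\mathbf{H}_{22})]$ directly from the explicit two-dimensional Hua-Pickrell density on $\mathbb{H}(2)$, for instance with $\varphi(x)=x/(1+x^{2})$, using the eigenvalue/eigenvector decomposition (the Haar distribution on $\mathbb{U}(2)$ is explicit) and show it is strictly positive.

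The main obstacle is the $-1/2<s\le 1/2$ case: the heavy tails of the Pearson~IV distribution rule out the direct use of unbounded observables such as $\mathsf{d}_{1}\mathsf{d}_{2}$, so one must work with bounded truncations and verify strict positivity of a concrete but non-trivial two-dimensional integral against $\mathfrak{M}_{2}^{(s)}$. In particular, unlike the $s>1/2$ case, the positivity cannot be read off from a closed-form moment formula and requires a genuine estimate, which is why the statement of the lemma, although intuitively obvious, is delicate to prove.
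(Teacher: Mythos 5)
The case $s>\tfrac12$ is fine: the second moment is finite and equals $1/(4s^2-1)>0$, and indeed the paper notes (in a footnote to its own proof) that this disposes of that range. The problem is your argument for $-\tfrac12<s\le\tfrac12$, which has a logical gap that I do not see how to close.

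You correctly observe that for a bounded odd $\varphi$, conditional independence gives $\mathbb{E}^{(s)}[\varphi(\mathsf{d}_1)\varphi(\mathsf{d}_2)]=\mathbb{E}\left[\left(\mathbb{E}[\varphi(\mathsf{d}_1)\mid\omega]\right)^2\right]\ge 0$, with equality iff $\mathbb{E}[\varphi(\mathsf{d}_1)\mid\omega]=0$ for $\nu^{(s)}$-a.e.\ $\omega$. Showing that this quantity is strictly positive, however, says nothing about $\gamma_1$. The conditional law of $\mathsf{d}_1$ given $\omega$ is a shift by $\gamma_1$ of a generally \emph{asymmetric} law (a convolution of centred exponentials with rates $1/\alpha_i^{\pm}$, which is symmetric only if the configurations $\{\alpha_i^+\}$ and $\{\alpha_i^-\}$ coincide). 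Thus $\mathbb{E}[\varphi(\mathsf{d}_1)\mid\omega]$ is generically nonzero \emph{regardless} of whether $\gamma_1\equiv 0$: positivity of $\mathbb{E}^{(s)}[\varphi(\mathsf{d}_1)\varphi(\mathsf{d}_2)]$ detects asymmetry of the $\alpha$-configuration, not non-vanishing of $\gamma_1$. Because $\varphi$ is a nonlinear truncation, $\mathbb{E}[\varphi(\mathsf{d}_1)\mid\omega]$ is not a function of $\gamma_1$ alone and there is no monotone relation between them, so the proposed contradiction never materialises. Equivalently: the observable $\varphi(\mathsf{d}_1)\varphi(\mathsf{d}_2)$ is sensitive to all of $\omega$, and its conditional variance $\mathsf{d}_1-\gamma_1$ does not disappear under the truncation; to isolate $\gamma_1$ one must make the unwanted contributions literally vanish, not merely be bounded.

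The paper's route does exactly this, but on the determinantal side rather than on the diagonal-entries side: it conditions on the event that all points of $\mathsf{C}^{(s)}$ lie in $(-R,R)$ (which has positive probability), uses Bufetov's theory of multiplicative functionals to see that the resulting conditioned process $\mathfrak{C}^{(s)}_R$ is again determinantal with a projection kernel $\mathfrak{K}^{(s)}_R$ inheriting symmetry about zero, and then computes the (now finite) second moment of $\sum_{x\in\mathfrak{C}^{(s)}_R}x$ as $\int x^2\mathfrak{K}^{(s)}_R(x,x)dx-\int\!\int xy\,\mathfrak{K}^{(s)}_R(x,y)^2\,dxdy$, showing it is strictly positive via a \emph{strict} Cauchy--Schwarz inequality. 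Some analogous truncation that is linear in the underlying point configuration (so that the resulting observable really is controlled by $\gamma_1$ and the cutoff makes the second moment finite) is needed; the nonlinear compression by a bounded $\varphi$ of the diagonal entries does not achieve this.
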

\begin{proof}
We begin by observing that by construction, see for example \cite{BorodinOlshanski}, since $\mathsf{C}^{(s)}$ has a maximal in absolute value point, there exists some $R>0$ so that all the points of $\mathsf{C}^{(s)}$ are contained in $\mathbb{R}^*\cap (-R,R)$ with positive probability\footnote{In fact, the probability (\ref{ProbabilityInterval}) is positive for all $R>0$ and precise asymptotics as $R\to 0$ are known, see for example \cite{DeiftKrasovskyVasilevska}.}:
\begin{align}\label{ProbabilityInterval}
\mathbb{P}\left(|x|<R, \ \forall x \in \mathsf{C}^{(s)} \right)>0.
\end{align}
From now on we fix such an $R>0$. Then, in order to establish the statement of the lemma it suffices to prove that\footnote{Clearly for $s>\frac{1}{2}$ the statement of the lemma follows from the explicit expression for the second moment of $\mathsf{X}(s)$ in (\ref{SecondMoment}). However, for $-\frac{1}{2}<s\le \frac{1}{2}$ one needs to introduce the cut-off at $R$ and the argument becomes more complicated and less explicit.}:
\begin{align}\label{PositiveMoment}
\mathbb{E}\left[\mathsf{X}(s)^2\mathbf{1}\left(|\alpha^{\pm}_i|<R, \textnormal{ for all } i \right)\right]>0.
\end{align}
Now, from the proof of Theorem 2.3 in \cite{Qiu} (see in particular the unnumbered display that follows display (35) involving a quantity $D_{\epsilon}$) we have:
\begin{align*}
\mathbb{E}\left[\mathsf{X}(s)^2\mathbf{1}\left(|\alpha^{\pm}_i|<R, \textnormal{ for all } i \right)\right]&=\mathbb{E}\left[\lim_{N\to \infty}\left(\sum_{x\in \mathsf{C}^{(s)}}^{}x\mathbf{1}\left(N^{-2}<|x|<R\right)\right)^2\mathbf{1}\left(|x|<R, \ \forall x \in \mathsf{C}^{(s)} \right)\right]\\&=\lim_{N\to \infty}\mathbb{E}\left[\left(\sum_{x\in \mathsf{C}^{(s)}}^{}x\mathbf{1}\left(N^{-2}<|x|<R\right)\right)^2\mathbf{1}\left(|x|<R, \ \forall x \in \mathsf{C}^{(s)} \right)\right].
\end{align*}

To analyse this quantity further we make use of the important observation that, under certain assumptions, the induced measure of a determinantal measure onto the subset of configurations all of whose points lie in a subset of the original state space is again a determinantal measure, see \cite{BufetovMultiplicative}, \cite{BufetovQuasiSymmetries}. In order to make this observation precise we need some notation. We denote by $\mathsf{Mult}_{\phi}:f\to \phi f$ the operator of multiplication by $\phi$ and we let $g_R(x)=\mathbf{1}\left(|x|<R\right)$. If we write $\mathbb{P}_K$ for the determinantal measure associated to the operator $K$ (assuming it exists) then from Corollary 1 in \cite{BufetovMultiplicative} (see also Proposition 2.7 in \cite{BufetovQuasiSymmetries}), whose conditions we will check shortly, we obtain that there exists an operator $\mathfrak{K}_R^{(s)}$ giving rise to a determinantal measure $\mathbb{P}_{\mathfrak{K}^{(s)}_R}$ so that the following holds:
\begin{align}\label{RestrictedDeterminantal}
\frac{\prod_{x\in \mathsf{C}^{(s)}}g_R(x)\mathbb{P}_{\mathsf{K}^{(s)}}}{\mathbb{P}\left(|x|<R, \ \forall x \in \mathsf{C}^{(s)} \right)}=\mathbb{P}_{\mathfrak{K}^{(s)}_R}.
\end{align}
We denote by $\mathfrak{C}_R^{(s)}$ the determinantal point process associated to $\mathfrak{K}_R^{(s)}$. Now, in order for the above application of Corollary 1 in \cite{BufetovMultiplicative} to be valid we need to check two conditions. First, the operator $\mathsf{Mult}_{(g_R-\mathbf{1})}\mathsf{K}^{(s)}$ needs to be trace class which is a consequence of Lemma \ref{KernelEstimate}. Second, the operator $\mathbf{1}+\mathsf{Mult}_{(g_R-\mathbf{1})}\mathsf{K}^{(s)}$ needs to be invertible which is equivalent to showing that its Fredholm determinant $\det\left(\mathbf{1}+\mathsf{Mult}_{(g_R-\mathbf{1})}\mathsf{K}^{(s)}\right)$ is non-zero. This holds since this determinant is actually equal to the probability (\ref{ProbabilityInterval}) which is strictly positive.

In fact, the operator $\mathfrak{K}_R^{(s)}$ has the following expression in terms of $\mathsf{K}^{(s)}$, see \cite{BufetovMultiplicative}, \cite{BufetovQuasiSymmetries} (we write $\sqrt{g_R}$ rather than $g_R$, to be consistent with the notation of \cite{BufetovMultiplicative}, \cite{BufetovQuasiSymmetries} which are working in a broader setting where $g_R$ is replaced by more general functions):
\begin{align*}
\mathfrak{K}_R^{(s)}=\mathsf{Mult}_{\sqrt{g_R}}\mathsf{K}^{(s)}\left(\mathbf{1}+\mathsf{Mult}_{(g_R-\mathbf{1})}\mathsf{K}^{(s)}\right)^{-1}\mathsf{Mult}_{\sqrt{g_R}}.
\end{align*}
Although we will not make direct use of the explicit expression above, we will need a number of properties of $\mathfrak{K}^{(s)}_R$ inherited from $\mathsf{K}^{(s)}$, see \cite{BufetovMultiplicative}, \cite{BufetovQuasiSymmetries} for more details. Firstly, it is clear that $\mathfrak{K}^{(s)}_R$ is a locally trace class positive self-adjoint contraction. Furthermore, since in fact $\mathsf{K}^{(s)}$ is an operator of orthogonal projection onto a certain subspace $\mathsf{L}^{(s)}$ of $L^{2}\left(\mathbb{R},\textnormal{Leb}\right)$, see Proposition 1.5 and Theorem 1.6 of \cite{Qiu} for the details, then $\mathfrak{K}_R^{(s)}$ is also an orthogonal projection onto the closed subspace $\mathfrak{L}_R^{(s)}=\sqrt{g_R}\mathsf{L}^{(s)}$, see Propositions 2.5 and 2.7 in \cite{BufetovQuasiSymmetries}. Moreover, it is easy to show that $\mathfrak{K}^{(s)}_R$ inherits the symmetry about zero property of $\mathsf{K}^{(s)}$ and finally we note that since $\mathfrak{K}^{(s)}_R$ is a locally trace class operator it admits a kernel which, by slightly abusing notation, we denote by $\mathfrak{K}^{(s)}_R(x,y)$. 

Now, from (\ref{RestrictedDeterminantal}) we obtain:
\begin{align*}
\frac{\mathbb{E}\left[\left(\sum_{x\in \mathsf{C}^{(s)}}^{}x\mathbf{1}\left(N^{-2}<|x|<R\right)\right)^2\mathbf{1}\left(|x|<R, \ \forall x \in \mathsf{C}^{(s)} \right)\right]}{\mathbb{P}\left(|x|<R, \ \forall x \in \mathsf{C}^{(s)} \right)}=\mathbb{E}\left[\left(\sum_{x\in \mathfrak{C}_R^{(s)}}^{}x\mathbf{1}\left(N^{-2}<|x|<R\right)\right)^2\right]
\end{align*}
and thus in order to prove (\ref{PositiveMoment}) it will suffice to show that:
\begin{align}\label{PositiveMoment2}
\lim_{N\to \infty}\mathbb{E}\left[\left(\sum_{x\in \mathfrak{C}_R^{(s)}}^{}x\mathbf{1}\left(N^{-2}<|x|<R\right)\right)^2\right]>0.
\end{align}
Moreover, by making use of the determinantal property we obtain that the last display is equal to:
\begin{align*}
&\lim_{N\to \infty}\mathbb{E}\left[\sum_{x\in \mathfrak{C}_R^{(s)}}^{}x^2\mathbf{1}\left(N^{-2}<|x|<R\right)+\sum_{x\neq y\in \mathfrak{C}_R^{(s)}}^{}xy\mathbf{1}\left(N^{-2}<|x|<R\right)\mathbf{1}\left(N^{-2}<|y|<R\right)\right]\\
&=\lim_{N\to \infty}\bigg[\int_{N^{-2}<|x|<R}^{}x^2\mathfrak{K}_R^{(s)}(x,x)dx\\& \ \ \ \ \  +\int_{N^{-2}<|x|<R}^{}\int_{N^{-2}<|y|<R}^{}xy\left(\mathfrak{K}_R^{(s)}(x,x)\mathfrak{K}_R^{(s)}(y,y)-\mathfrak{K}_R^{(s)}(x,y)\mathfrak{K}_R^{(s)}(y,x)\right)dxdy\bigg]\\
&=\int_{-R}^{R}x^2\mathfrak{K}_R^{(s)}(x,x)dx-\lim_{N\to \infty}\int_{N^{-2}<|x|<R}^{}\int_{N^{-2}<|y|<R}^{}xy\mathfrak{K}_R^{(s)}(x,y)^2dxdy.
\end{align*}
In the last line we have used the fact that $\mathfrak{K}_R^{(s)}\left(x,y\right)=\mathfrak{K}_R^{(s)}\left(y,x\right)$ and that by symmetry of $\mathfrak{C}_R^{(s)}$ about zero we have that:
\begin{align*}
\int_{N^{-2}<|x|<R}^{}x\mathfrak{K}_R^{(s)}(x,x)dx=0.
\end{align*}
Now, observe that we can dominate the integrand in the second term as follows, for all $N\ge 1$:
\begin{align*}
|x|\mathbf{1}\left(N^{-2}<|x|<R\right)|y|\mathbf{1}\left(N^{-2}<|y|<R\right)\mathfrak{K}_R^{(s)}(x,y)^2\le \left(\frac{x^2+y^2}{2}\right)\mathfrak{K}_R^{(s)}\left(x,y\right)^2\mathbf{1}\left(|y|<R\right)\mathbf{1}\left(|x|<R\right)
\end{align*}
and moreover:
\begin{align*}
&\int_{\mathbb{R}^*}^{}\int_{\mathbb{R}^*}^{}\left(\frac{x^2+y^2}{2}\right)\mathfrak{K}_R^{(s)}\left(x,y\right)^2\mathbf{1}\left(|y|<R\right)\mathbf{1}\left(|x|<R\right)dxdy\\
&\le \int_{\mathbb{R}^*}^{}x^2\mathbf{1}\left(|x|<R\right)dx\int_{\mathbb{R}^*}^{}\mathfrak{K}_R^{(s)}\left(x,y\right)^2dy=\int_{-R}^{R}x^2\mathfrak{K}_R^{(s)}(x,x)dx\le\frac{\int_{-R}^{R}x^2\mathsf{K}^{(s)}(x,x)dx}{\mathbb{P}\left(|x|<R, \ \forall x \in \mathsf{C}^{(s)} \right)} <\infty.
\end{align*}
In the single equality above we have used the fact that $\mathfrak{K}_R^{(s)}$ is a kernel of orthogonal projection onto a certain closed subspace $\mathfrak{L}_R^{(s)}=\sqrt{g_R}\mathsf{L}^{(s)}$ of $L^{2}\left(\mathbb{R},\textnormal{Leb}\right)$, see Proposition 1.5 and Theorem 1.6 in \cite{Qiu} for more details, so that in particular:
\begin{align*}
\int_{\mathbb{R}^*}^{}\mathfrak{K}_R^{(s)}\left(x,y\right)\mathfrak{K}_R^{(s)}(y,z)dy=\mathfrak{K}_R^{(s)}(x,z),
\end{align*}
along with $\mathfrak{K}_R^{(s)}\left(x,y\right)=\mathfrak{K}_R^{(s)}\left(y,x\right)$. Thus, by applying the dominated convergence theorem we obtain:
\begin{align*}
\lim_{N\to \infty}\mathbb{E}\left[\left(\sum_{x\in \mathfrak{C}_R^{(s)}}^{}x\mathbf{1}\left(N^{-2}<|x|<R\right)\right)^2\right]=\int_{-R}^{R}x^2\mathfrak{K}_R^{(s)}\left(x,x\right)dx-\int_{-R}^{R}\int_{-R}^{R}xy\mathfrak{K}_R^{(s)}(x,y)^2dxdy.
\end{align*}
 Now, by applying the Cauchy-Schwarz inequality we get:
\begin{align*}
\bigg|\int_{-R}^{R}\int_{-R}^{R} xy \mathfrak{K}_R^{(s)}(x,y)^2 dx dy\bigg|
&\le \left(\int_{-R}^{R}\int_{-R}^{R}x^2\mathfrak{K}_R^{(s)}(x,y)^2dxdy\right)^{\frac{1}{2}} \left(\int_{-R}^{R}\int_{-R}^{R}y^2\mathfrak{K}_R^{(s)}(x,y)^2dxdy\right)^{\frac{1}{2}}\\
&\le \left(\int_{-R}^{R}x^2dx \int_{\mathbb{R}^*}^{}\mathfrak{K}_R^{(s)}(x,y)^2dy\right)^{\frac{1}{2}} \left(\int_{-R}^{R}y^2dy \int_{\mathbb{R}^*}^{}\mathfrak{K}_R^{(s)}(x,y)^2dx\right)^{\frac{1}{2}}\\
&=\int_{-R}^{R}x^2\mathfrak{K}_R^{(s)}\left(x,x\right)dx.
\end{align*}
However, the first inequality above is strict since equality in Cauchy-Schwarz would hold if and only if, for some non-zero constant $c$, we have:
\begin{align*}
x^2\mathfrak{K}_R^{(s)}(x,y)^2=cy^2\mathfrak{K}_R^{(s)}(x,y)^2, \ \textnormal{ for Lebesgue almost every } (x,y)\in \left(\mathbb{R}^*\cap (-R,R)\right)^2,
\end{align*}
which is obviously false. This gives (\ref{PositiveMoment2}) and completes the proof.
\end{proof}

\section{Proof of results for the leading order coefficient}\label{SectionPainleveProof}

We first prove Proposition \ref{EvenMomentsFormulaProposition} using the ideas developed in the proof of Theorem \ref{MainResult}.

\begin{proof}[Proof of Proposition \ref{EvenMomentsFormulaProposition}] Let $h\in \mathbb{N}$. We begin with the following important observation:
\begin{align}\label{MomentsInTermsOfDiagonal}
\mathbb{E}\left[\mathsf{X}(s)^{2h}\right]=\mathbb{E}^{(s)}\left[\mathsf{d}_1\mathsf{d}_2\cdots\mathsf{d}_{2h}\right], \ s>h-\frac{1}{2}.
\end{align}
By making use of the disintegration of $\mathfrak{M}^{(s)}$ explained in the proof of Theorem \ref{MainResult} this can be seen as follows. By conditioning on $\omega\in \Omega$, we have that the $\{\mathsf{d}_i\}_{i=1}^\infty$ are i.i.d. with mean $\gamma_1$ and moreover recalling that under $\mathfrak{M}^{(s)}$ we have $\gamma_1\overset{\textnormal{d}}{=}\mathsf{X}(s)$ establishes (\ref{MomentsInTermsOfDiagonal}). Thus, by using the fact that for $s>h-\frac{1}{2}$:
\begin{align*}
\mathbb{E}_k^{(s)}\left[\left(\mathsf{x}_1^{(k)}+\dots+\mathsf{x}_k^{(k)}\right)^{2h}\right]=\mathbb{E}^{(s)}\left[\left(\mathsf{d}_1+\mathsf{d}_2+\dots+\mathsf{d}_k\right)^{2h}\right], \ \forall k\ge 1,
\end{align*}
in order to establish (\ref{EvenMomentsFormula}) it is equivalent to prove that:
\begin{align}\label{EquivalentEvenMomentsFormula}
\mathbb{E}^{(s)}\left[\mathsf{d}_1\mathsf{d}_2\cdots\mathsf{d}_{2h}\right]=\frac{1}{(2h)!}\sum_{k=1}^{2h}(-1)^{2h-k}\binom{2h}{k}\mathbb{E}^{(s)}\left[\left(\mathsf{d}_1+\mathsf{d}_2+\dots+\mathsf{d}_k\right)^{2h}\right].
\end{align}
We will now make use of an elementary combinatorial identity. Let $N\ge 1$ and $z_1,\dots,z_N$ be arbitrary complex numbers. For $k=1,\dots, N$ define the following quantities:
\begin{align*}
\mathcal{S}_k\left(z_1,\dots,z_N\right)=\sum_{1\le i_1<i_2<\dots<i_k\le N}^{}\left(z_{i_1}+z_{i_2}+\dots+z_{i_k}\right)^{N}.
\end{align*}
Then, we have:
\begin{align}\label{identity}
z_1z_2\cdots z_N=\frac{1}{N!}\sum_{k=1}^{N}(-1)^{N-k}\mathcal{S}_k\left(z_1,\dots,z_N\right).
\end{align}
This identity can be checked as follows. Observe that the right hand side is a polynomial of degree at most $N$ and moreover each $z_i$ is a factor. Then, checking the coefficient of $z_1z_2\cdots z_N$ gives the conclusion.

Finally, by the exchangeability of the random variables $\{\mathsf{d}_i\}_{i=1}^\infty$ we have that, for all $k=1,\dots,2h$:
\begin{align*}
\mathbb{E}^{(s)}\left[\mathcal{S}_k\left(\mathsf{d}_1,\dots,\mathsf{d}_{2h}\right)\right]=\binom{2h}{k}\mathbb{E}^{(s)}\left[\left(\mathsf{d}_1+\mathsf{d}_2+\dots+\mathsf{d}_k\right)^{2h}\right],
\end{align*}
which along with identity (\ref{identity}) immediately gives (\ref{EquivalentEvenMomentsFormula}).

We now prove the rationality claim of the proposition. By definition we have:
\begin{align*}
\mathbb{E}_N^{(s)}\left[\left(\mathsf{x}_1^{(N)}+\dots+\mathsf{x}_N^{(N)}\right)^{2h}\right]=\frac{1}{N!\mathsf{c}_N^{(s)}}\int_{-\infty}^{\infty}\cdots\int_{-\infty}^{\infty}\frac{\left(x_1+\dots+x_N\right)^{2h}\prod_{1\le i<j\le N}^{}(x_i-x_j)^2}{\left(1+x_1^2\right)^{N+s}\cdots \left(1+x_N^2\right)^{N+s}}dx_1\dots dx_N.
\end{align*}
By simply expanding the numerator:
\begin{align*}
\left(x_1+\dots+x_N\right)^{2h}\prod_{1\le i<j\le N}^{}(x_i-x_j)^2=\textnormal{Polynomial in the variables }\ x_1,\dots,x_N,
\end{align*}
we obtain that $\mathbb{E}_N^{(s)}\left[\left(\mathsf{x}_1^{(N)}+\dots+\mathsf{x}_N^{(N)}\right)^{2h}\right]$ can be written as a linear combination, with coefficients not depending on $s$, of terms of the form:
\begin{align*}
\frac{1}{\mathsf{c}_N^{(s)}}\int_{-\infty}^{\infty}dx_1\cdots \int_{-\infty}^{\infty}dx_N \frac{x_1^{2m_1}}{\left(1+x_1^2\right)^{N+s}}\cdots \frac{x_N^{2m_N}}{\left(1+x_N^2\right)^{N+s}}, \ \ m_i\in \{0,1,2, \dots\} \textnormal{ with } m_i<N+s-\frac{1}{2}.
\end{align*} 
Note that, we have no odd exponents in the integrands above since by symmetry about zero the integrals vanish:
\begin{align*}
\int_{-\infty}^{\infty}\frac{x^{2m+1}}{\left(1+x^2\right)^{N+s}}dx=0, \  m=0,1,2,\dots \ \textnormal{ with } m<N+s-1.
\end{align*}
Moreover, recall that we have the following well-known integral evaluation:
\begin{align*}
\int_{-\infty}^{\infty}\frac{x^{2m}}{\left(1+x^2\right)^{N+s}}dx=\frac{\Gamma\left(m+\frac{1}{2}\right)\Gamma\left(N+s-m-\frac{1}{2}\right)}{\Gamma(N+s)}, \  m=0,1,2,\dots \ \textnormal{ with } m<N+s-\frac{1}{2}
\end{align*}
and that from Definition \ref{DefinitionHuaPickrell}:
\begin{align*}
\mathsf{c}_N^{(s)}=(2\pi)^N 2^{-N^2} G(N+1)2^{-2sN}\prod_{j=1}^{N}\frac{\Gamma(2s+N-j+1)}{\Gamma(s+N-j+1)^2}.
\end{align*}
Hence, we are required to show that:
\begin{align*}
2^{2sN}\prod_{j=1}^{N}\frac{\Gamma(s+N-j+1)^2}{\Gamma(2s+N-j+1)}\frac{\Gamma\left(N+s-m_j-\frac{1}{2}\right)}{\Gamma(N+s)}
\end{align*}
is a rational function of $s$. Now, observe that for any $j=1,\dots, N$ the function 
\begin{align*}
\frac{\Gamma(s+N-j+1)}{\Gamma(s+N)} \ \textnormal{ is rational in } s.
\end{align*}
So the claim further reduces to showing that the following is rational in $s$:
\begin{align*}
2^{2sN}\prod_{j=1}^{N}\frac{\Gamma(s+N-j+1)\Gamma\left(N+s-m_j-\frac{1}{2}\right)}{\Gamma(2s+N-j+1)}.
\end{align*}
We now use the Legendre duplication formula:
\begin{align*}
\Gamma(z)\Gamma\left(z+\frac{1}{2}\right)=2^{1-2z}\sqrt{\pi}\Gamma(2z),
\end{align*}
to obtain that the expression above is equal to, with $const$ being independent of $s$:
\begin{align*}
const \times 2^{2sN}\prod_{j=1}^{N}\frac{\Gamma(s+N-j+1)\Gamma\left(s+N-m_j-\frac{1}{2}\right)}{2^{2s}\Gamma\left(s+\frac{N-j}{2}+\frac{1}{2}\right)\Gamma\left(s+\frac{N-j}{2}+1\right)}.
\end{align*}
The claim then follows if for all $j=1,\dots,N$ we show that the following expression is rational in $s$:
\begin{align*}
\frac{\Gamma(s+N-j+1)\Gamma\left(s+N-m_j-\frac{1}{2}\right)}{\Gamma\left(s+\frac{N-j}{2}+\frac{1}{2}\right)\Gamma\left(s+\frac{N-j}{2}+1\right)},
\end{align*}
which  is easily seen to be true since exactly one of $\frac{N-j}{2}+\frac{1}{2}$ and $\frac{N-j}{2}+1$ is an integer and the other a half integer.
\end{proof}

We now prove Proposition \ref{PainleveCharacteristicFunction}. This is done by taking the scaling limit as $N\to \infty$, using the methods developed in this paper, of a differential equation for finite $N$ from \cite{JointMomentsPainleve}.

\begin{proof}[Proof of Proposition \ref{PainleveCharacteristicFunction}]
We begin by considering (we shall show next that these quantities are well defined), with $s>1$ (not necessarily integer yet):
\begin{align*}
\mathsf{\Xi}_N^{(s)}(t)=t\frac{d}{dt}\log \mathbb{E}_N^{(s)}\left[e^{\i \frac{t}{2}\frac{\sum_{i=1}^N \mathsf{x}_i^{(N)}}{N}}\right].
\end{align*}
An elementary calculation shows that for a function $\phi(\cdot)$, with $\phi(t)\neq 0$, smooth enough we have:
\begin{align*}
t\frac{d}{dt}\log \phi(t)&=t\frac{\phi'(t)}{\phi(t)}, \\ \frac{d}{dt}\left[t\frac{d}{dt}\log \phi(t)\right]&=\frac{\phi'(t)}{\phi(t)}+t\frac{\phi''(t)}{\phi(t)}-\frac{t\phi'(t)^2}{\phi(t)^2},\\
\frac{d^2}{dt^2}\left[t\frac{d}{dt}\log \phi(t)\right]&=2\frac{\phi''(t)}{\phi(t)}-2\frac{\phi'(t)^2}{\phi(t)^2}-3t\frac{\phi'(t)\phi''(t)}{\phi(t)^2}+t\frac{\phi'''(t)}{\phi(t)}+2t\frac{\phi'(t)^3}{\phi(t)^3}.
\end{align*}
Now, from basic properties of characteristic functions we have that there exists some $T'>0$ such that:
\begin{align*}
t\mapsto  \mathbb{E}\left[e^{\i \frac{t}{2}\mathsf{X}(s)}\right] \neq 0, \ \forall t\in [0,T'].
\end{align*}
On the other hand, from Theorem \ref{BorodinOlshanskiQiu} we have that uniformly on compact sets in $\mathbb{R}$:
\begin{align*}
\mathbb{E}_N^{(s)}\left[e^{\i \frac{t}{2}\frac{\sum_{i=1}^N \mathsf{x}_i^{(N)}}{N}}\right]\overset{N\to\infty}{\longrightarrow}\mathbb{E}\left[e^{\i \frac{t}{2}\mathsf{X}(s)}\right]
\end{align*}
and thus, since for all $N\ge 1$ each of $t\mapsto \mathbb{E}_N^{(s)}\left[e^{\i \frac{t}{2}\frac{\sum_{i=1}^N \mathsf{x}_i^{(N)}}{N}}\right]$ is a characteristic function, there exists some $T\le T'$ such that:
\begin{align*}
t \mapsto \mathbb{E}_N^{(s)}\left[e^{\i \frac{t}{2}\frac{\sum_{i=1}^N \mathsf{x}_i^{(N)}}{N}}\right] \neq 0, \ \forall t\in [0,T], \ \forall{ N \ge 1}.
\end{align*}
Moreover, since $s>1$, we have $\mathbb{E}_N^{(s)}\left[|\sum_{i=1}^N \mathsf{x}_i^{(N)}|^3\right]<\infty$ and so from the formulae above we obtain that for all $N\ge 1$, the functions $t\mapsto \frac{d^p}{dt^p}\mathsf{\Xi}_N^{(s)}(t)$ for $p\in \{0,1,2\}$ are well-defined and continuous in $[0,T]$. 

Then, making use of Theorem \ref{BorodinOlshanskiQiu} and the uniform integrability of the sequence of random variables $\bigg\{ \bigg|\sum_{i=1}^{N}\frac{\mathsf{x}^{(N)}_i}{N}\bigg|^{3} \bigg\}_{N\ge 1}$ for $s>1$, with $\left(\mathsf{x}_1^{(N)},\dots,\mathsf{x}_N^{(N)}\right)$ following $\mathsf{M}_N^{(s)}$, proven in the previous section we obtain that uniformly in $t\in [0,T]$:
\begin{align*}
\frac{d^p}{dt^p}\mathbb{E}_N^{(s)}\left[e^{\i \frac{t}{2}\frac{\sum_{i=1}^N \mathsf{x}_i^{(N)}}{N}}\right]\overset{N\to\infty}{\longrightarrow}\frac{d^p}{dt^p}\mathbb{E}\left[e^{\i \frac{t}{2}\mathsf{X}(s)}\right], \ p\in \{ 0,1,2, 3\}.
\end{align*} 
Thus, from the workings above we get that, for $s>1$, uniformly in $t\in [0,T]$:
\begin{align}\label{ConvergenceCharacteristic}
\frac{d^p}{dt^p}\mathsf{\Xi}_N^{(s)}(t)\overset{N\to\infty}{\longrightarrow}\frac{d^p}{dt^p}\mathsf{\Xi}^{(s)}(t), \ p\in \{ 0,1,2\}.
\end{align}
In particular, $\mathsf{\Xi}^{(s)}$ is $C^2$ in $[0,T]$.

We now make use of the following result which is essentially implicit\footnote{By combining Proposition 2 and Theorem 1 in \cite{JointMomentsPainleve} along with display (4.22) therein (note that there is a typo in (4.22), $(-1)^{\frac{s(s-1)}{2}+h}$ should be $(-1)^{\frac{s(s-1)}{2}}$) and some straightforward elementary manipulations we obtain the following representation:
\begin{align*}
\mathbb{E}_N^{(s)}\left[e^{\i \frac{t}{2}\sum_{i=1}^N \mathsf{x}_i^{(N)}}\right]=\frac{1}{(2\pi \i)^s}\frac{(-1)^{\frac{s(s-1)}{2}}}{\mathfrak{F}_N(s,0)}e^{-\left(\frac{N}{2}+s\right)t}\times (2\pi \i)^se^{\left(\frac{N}{2}+s\right)t}(-1)^{\frac{s(s-1)}{2}}G_N^{(s)}(t)=\frac{1}{\mathfrak{F}_N(s,0)}G_N^{(s)}(t),
\end{align*}
for a certain function $G_N^{(s)}$; this is exactly the function $f_k$ from (4.22) in \cite{JointMomentsPainleve} with the identification $k=s$ (the dependence on $N$ is dropped there). Moreover, from display (4.25) therein we can write (since $G_N^{(s)}(0)=\mathfrak{F}_N(s,0)$ from display (4.28) in \cite{JointMomentsPainleve}):
\begin{align*}
\mathbb{E}_N^{(s)}\left[e^{\i \frac{t}{2}\sum_{i=1}^N \mathsf{x}_i^{(N)}}\right]=\frac{1}{\mathfrak{F}_N(s,0)}G_N^{(s)}(0)\exp \left(\int_{0}^{t}\frac{\xi_N^{(s)}(x)}{x}dx\right)=\exp \left(\int_{0}^{t}\frac{\xi_N^{(s)}(x)}{x}dx\right),
\end{align*}
where the function $\xi_N^{(s)}$ after the rescaling $\mathsf{\Xi}_N^{(s)}(x)=\xi_N^{(s)}\left(\frac{x}{N}\right)$ satisfies (\ref{PainleveFiniteN}), see displays (4.31) and (4.32) in \cite{JointMomentsPainleve}.
} in \cite{JointMomentsPainleve}. Namely, we have the following representation, for $s\in \mathbb{N}$:
\begin{align*}
\mathbb{E}_N^{(s)}\left[e^{\i \frac{t}{2}\sum_{i=1}^N \mathsf{x}_i^{(N)}}\right]=\exp \left(\int_{0}^{t}\frac{\xi_N^{(s)}(x)}{x}dx\right),
\end{align*}
for a certain function $\xi_N^{(s)}$ which satisfies a particular differential equation that we will now state, after performing a simple rescaling. Observe that, by a change of variables, we have $\mathsf{\Xi}_N^{(s)}(x)=\xi_N^{(s)}\left(\frac{x}{N}\right)$ and from Section 4.3 in \cite{JointMomentsPainleve}, see in particular display (4.32) therein, this satisfies the equation:
\begin{align}
\left(t\frac{d^2\mathsf{\Xi}_N^{(s)}}{dt^2}\right)^2=&-4t\left(\frac{d\mathsf{\Xi}_N^{(s)}}{dt}\right)^3+(4s^2+4\mathsf{\Xi}^{(s)}+N^{-2}t^2)\left(\frac{d\mathsf{\Xi}_N^{(s)}}{dt}\right)^2+t\left(1+2N^{-1}s-2N^{-2}\mathsf{\Xi}_N^{(s)}\right)\frac{d\mathsf{\Xi}_N^{(s)}}{dt}\nonumber\\
&-\left(1+2N^{-1}s-N^{-2}\mathsf{\Xi}_N^{(s)}\right)\mathsf{\Xi}_N^{(s)}.\label{PainleveFiniteN}
\end{align}
By taking the limit $N\to \infty$ in the equation (\ref{PainleveFiniteN}) above and using the uniform convergence in $[0,T]$ from (\ref{ConvergenceCharacteristic}) we get that for $s\in \mathbb{N}_{>1}$:
\begin{align*}
\left(t\frac{d^2\mathsf{\Xi}^{(s)}}{dt^2}\right)^2=-4t\left(\frac{d\mathsf{\Xi}^{(s)}}{dt}\right)^3+(4s^2+4\mathsf{\Xi}^{(s)})\left(\frac{d\mathsf{\Xi}^{(s)}}{dt}\right)^2+t\frac{d\mathsf{\Xi}^{(s)}}{dt}-\mathsf{\Xi}^{(s)}.
\end{align*}
Finally, to check the boundary conditions for $\mathsf{\Xi}^{(s)}$ observe that we have the following boundary conditions for $\mathsf{\Xi}_N^{(s)}$, for all $N\ge 1$:
\begin{align*}
\mathsf{\Xi}_N^{(s)}(0)&=t\frac{\mathbb{E}_N^{(s)}\left[\frac{\i\sum_{i=1}^N \mathsf{x}_i^{(N)}}{2N}e^{\i \frac{t}{2}\frac{\sum_{i=1}^N \mathsf{x}_i^{(N)}}{N}}\right]}{\mathbb{E}_N^{(s)}\left[e^{\i \frac{t}{2}\frac{\sum_{i=1}^N \mathsf{x}_i^{(N)}}{N}}\right]}\bigg|_{t=0}=0\\
\frac{d}{dt}\mathsf{\Xi}_N^{(s)}(t)\big|_{t=0}&=\frac{\mathbb{E}_N^{(s)}\left[\frac{\i\sum_{i=1}^N \mathsf{x}_i^{(N)}}{2N}e^{\i \frac{t}{2}\frac{\sum_{i=1}^N \mathsf{x}_i^{(N)}}{N}}\right]}{\mathbb{E}_N^{(s)}\left[e^{\i \frac{t}{2}\frac{\sum_{i=1}^N \mathsf{x}_i^{(N)}}{N}}\right]}\bigg|_{t=0}+t\frac{\mathbb{E}_N^{(s)}\left[\left(\frac{\i\sum_{i=1}^N \mathsf{x}_i^{(N)}}{2N}\right)^2e^{\i \frac{t}{2}\frac{\sum_{i=1}^N \mathsf{x}_i^{(N)}}{N}}\right]}{\mathbb{E}_N^{(s)}\left[e^{\i \frac{t}{2}\frac{\sum_{i=1}^N \mathsf{x}_i^{(N)}}{N}}\right]}\bigg|_{t=0}\\& \ \ -t\frac{\mathbb{E}_N^{(s)}\left[\frac{\i\sum_{i=1}^N \mathsf{x}_i^{(N)}}{2N}e^{\i \frac{t}{2}\frac{\sum_{i=1}^N \mathsf{x}_i^{(N)}}{N}}\right]^2}{\mathbb{E}_N^{(s)}\left[e^{\i \frac{t}{2}\frac{\sum_{i=1}^N \mathsf{x}_i^{(N)}}{N}}\right]^2}\bigg|_{t=0}=0
\end{align*}
since by symmetry around the origin $\mathbb{E}_N^{(s)}\left[\sum_{i=1}^N \mathsf{x}_i^{(N)}\right]=0$ and moreover  $\mathbb{E}_N^{(s)}\left[|\sum_{i=1}^N \mathsf{x}_i^{(N)}|^2\right]<\infty$ since $s>\frac{1}{2}$. Then, the corresponding boundary conditions for $\mathsf{\Xi}^{(s)}$ follow from (\ref{ConvergenceCharacteristic}).
\end{proof}

\bigskip
\noindent
{\sc School of Mathematics, University of Edinburgh, James Clerk Maxwell Building, Peter Guthrie Tait Rd, Edinburgh EH9 3FD, U.K.}\newline
\href{mailto:theo.assiotis@ed.ac.uk}{\small theo.assiotis@ed.ac.uk}

\bigskip
\noindent
{\sc Mathematical Institute, University of Oxford, Oxford, OX2 6GG, UK.}\newline
\href{mailto:jon.keating@maths.ox.ac.uk}{\small jon.keating@maths.ox.ac.uk}

\bigskip
\noindent
{\sc Department of Statistics, University of Warwick, Coventry CV4 7AL, U.K.}\newline
\href{mailto:J.Warren@warwick.ac.uk}{\small J.Warren@warwick.ac.uk}

\end{document}